\tikzset{cross/.style={cross out, draw=black, minimum size=2*(#1-\pgflinewidth), inner sep=0pt, outer sep=0pt},
	cross/.default={1pt}}
\newcommand\numberthis{\addtocounter{equation}{1}\tag{\theequation}}
\DeclareMathOperator{\re}{\mathbb{R}e}
\DeclareMathOperator{\im}{\mathbb{I}m}
\newcommand{\lnorm}[1]{ \left\| #1 \right\|}
\newcommand{\spr}[2]{\langle #1,#2 \rangle}
\renewcommand{\i}{\mathrm i}
\newcommand{\INF}{{\infty}}
\newcommand{\supp}{\mbox{supp}}
\newcommand{\tta}{\theta}
\newcommand{\OM}{\Omega}
\newcommand{\sph}{{{\mathbb S}^ 1}}
\newcommand{\del}{\partial}
\newcommand{\Gam}{\varGamma}
\newcommand{\ol}{\overline}
\newcommand{\ds}{\displaystyle}
\newcommand{\dba}{\overline{\partial}}
\newcommand{\BR}{\mathbb{R}}
\newcommand{\BC}{\mathbb{C}}
\newcommand{\BZ}{\mathbb{Z}}
\newcommand{\BN}{\mathbb{N}}
\newcommand{\bu}{{\bf u}}
\newcommand{\bg}{{\bf g}}
\newcommand{\bbf}{{\bf f}}
\newcommand{\bh}{{\bf h}}
\newcommand{\bF}{{\bf F}}
\newcommand{\bzero}{\mathbf 0}
\newcommand{\btheta}{\boldsymbol \theta}
\newcommand{\jpn}{\langle n\rangle}
\newcommand{\jpk}{\langle k\rangle}
\newcommand{\jpj}{\langle j\rangle}
\newcommand{\LL}{{\mathcal L}}
\newcommand{\B}{\mathcal{B}}
\newcommand{\HT}{\mathcal{H}}
\newtheorem{theorem}{Theorem}[section]
\newtheorem{prop}{Proposition}[section]
\newtheorem{lemma}{Lemma}[section]
\newtheorem{cor}{Corollary}[section]
\newtheorem{claim}{Claim}[section]
\newtheorem{remark}{Remark}[section]
\numberwithin{equation}{section}
\title[On the range of the $X$-ray transform]{On the range of the $X$-ray transform of symmetric tensors compactly supported in the plane}
\begin{document}
	\date{\today}
	\author{Kamran Sadiq}
	\address{Faculty of Mathematics, University of Vienna, Oskar-Morgenstern-Platz 1, 1090 Vienna, Austria}
	\email{kamran.sadiq@univie.ac.at}
	\author{Alexandru Tamasan}
	\address{Department of Mathematics, University of Central Florida, Orlando, 32816 Florida, USA}
	\email{tamasan@math.ucf.edu}
	\makeatletter
	\@namedef{subjclassname@2020}{\textup{2020} Mathematics Subject Classification}
	\makeatother
	\subjclass[2020]{Primary 44A12, 35J56; Secondary 45E05} 
	\keywords{$X$-ray transform, Radon transform, fan-beam coordinates, Gelfand-Graev-Helgason-Ludwig moment conditions,
		$A$-analytic maps, Hilbert transform}
	\maketitle
	
	\begin{abstract}
		We find the necessary and sufficient conditions on the Fourier coefficients of a function $g$ on the torus to be in the range of the $X$-ray transform of a symmetric tensor of compact support in the plane. 
	\end{abstract}
	
	\section{Introduction}
	
	We revisit the range characterization of the $X$-ray transform of a real valued symmetric $m$- tensors of compact support in the Euclidean plane. Via a re-parametrization of lines, the $m=0$ case is the classical Radon transform \cite{radon1917}, for which the necessary and sufficient constraints have been long established independently by Gelfand and Graev \cite{gelfandGraev}, Helgason  \cite{helgason65}, and Ludwig \cite{ludwig}; we refer to the result as  \textit{GGHL characterization}.  Models which account for the attenuation have also been considered in the homogeneous case \cite{kuchmentLvin}, and in the non-homogeneous case in the breakthrough works \cite{ABK, novikov01,novikov02}, and subsequently \cite{natterer01, bomanStromberg, bal04, monard17}. Finding applications in noise reduction \cite{yuWang07, yuWangEtall06}, completion of the data \cite{xia_etall, gompelDelfriseDyck, karp_etal, kudoSaito},  CT-hardware failure diagnosis \cite{patch}, the range characterization problem in the $0$-tensor case continues to stimulate the interests of both mathematicians and practitioners \cite{chanLeng05,clackdoyle13, kazantsevBukhgeimJr04, kazantsevBukhgeimJr06, sadiqTamasan01, monard16, donsubPreprint}. In particular in \cite{sadiqTamasan01} the authors gave a range characterization in terms of the Bukhgeim-Hilbert transform, the Hilbert-like transform associated with A-analytic maps in the sense of Bukhgeim \cite{bukhgeimBook}. The latter result  was extended to $1$-and $2$-tensors in \cite{sadiqTamasan02, sadiqScherzerTamasan}, and to an arbitrary order in  \cite{omogbheSadiq}.
	
	For tensors of order $m\geq 1$, the non-injectivity of the $X$-ray transform makes  the range characterization problem more interesting.  In the Euclidean plane, the GGHL-characterization was extended to arbitrary symmetric $m$-tensors in \cite{pantyukhina}.
	For a survey of results on tomography of tensors in the Euclidean plane we refer to \cite{derevtsovSvetov15}.
	
	The systematic study of tensor tomography in non-Euclidean spaces originated in \cite{vladimirBook}. On simple Riemannian surfaces, the range characterization of the geodesic $X$-ray of compactly supported $0$ and $1$ has been established in terms of the scattering relation in \cite{pestovUhlmann}, and the results were extended to symmetric tensors of arbitrary order in \cite{AMU}, and to attenuating media  in \cite{venke20}; see \cite{paternainSaloUhlmann14} for a comprehensive survey. 
	
	The connection between the Euclidean version of the characterization in \cite{pestovUhlmann} and the GGHL characterization was established in \cite{monard16}.
	
	In \cite{sadiqTamasan22} the authors considered the lines parametrized by points on the torus and gave a range characterization for compactly supported functions in terms of the Fourier coefficients on the Fourier lattice of the torus. This novel point of view allowed to establish the missing connection between the result in \cite{sadiqTamasan01} and the classical GGHL characterization. Although $Xf$  is a function on the torus, this problem differs from the one in \cite{ilmavirta_etal}, where for a given direction (of rational slope) the integration takes place over a finite union of parallel segments in the unit disc.

	In here we extend the result in \cite{sadiqTamasan22} to symmetric tensors of an arbitrary order. The method of proof is  based on some explicit mapping properties of the Bukhgeim-Hilbert transform. Apart from the symmetry constraints due to the double parameterization of the lines, of specific interest are the moment  conditions. 
	The thrust of this work are the constraints \eqref{RTCond_Eventensor} and \eqref{RTCond_Oddtensor} replacing the generalized moment conditions in \cite{pantyukhina},  and the sufficiency part in Theorems \ref{RangeCharac_Eventensor} and \ref{RangeCharac_Oddtensor} for tensors of finite smoothness.

	
	This work concerns real valued tensors. For a complex valued tensor $\bbf$, since $\ds \re (X\bbf) = X(\re(\bbf))$ and $\ds \im (X\bbf) = X(\im(\bbf))$,  Theorems \ref{RangeCharac_Eventensor} and \ref{RangeCharac_Oddtensor} 
	 apply separately to  $\re (\bbf)$ and to $\im(\bbf)$.

	All the details establishing notation and the statement of the main results are in Section \ref{sec:prelim}. In Section \ref{sec:L2map} we briefly recall existing results on $A$-analytic maps that are used in the proofs. 
	In Section \ref{Sec:pf_mainTh} we present the proof of the Theorems \ref{RangeCharac_Eventensor} and \ref{RangeCharac_Oddtensor}. 
	To improve the readability of the work, some of the claims are proven in the appendix.

	\section{Preliminaries and statement of main results}\label{sec:prelim}
	
	
	Let $\bbf= (f_{i_1i_2...i_m})$, with $i_1,...,i_m\in\{1,2\}$ be a real valued symmetric $m$-tensor, with integrable components of compact support in $\BR^2$. By scaling and translating, we may assume that all the components have compact support inside the the unit disc $\OM = \{ z  \in \BC : |z| <1  \}.$ The boundary $\Gam$ of $\OM$ is the unit circle, but we keep this notation to differentiate from the set $\sph$ of directions. The symmetry refers to the components $f_{i_1i_2...i_m}$ being invariant under any transposition of indexes.
	
	For $\ds \btheta^m := \underbrace{ \btheta  \otimes \btheta \otimes \cdots \otimes \btheta}_m\in\left(\sph\right)^m$ and $z\in\OM$, let 
	$\langle \bbf(z), \btheta^m \rangle$ denote the pairing
	\begin{align}
		\langle \bbf(z), \btheta^m \rangle = f_{i_1 \cdots i_m} (z) \theta^{i_1} \cdot \theta^{i_2} \cdots \theta^{i_m},
	\end{align} where the summation convention is understood over all repeated indexes $\ds(i_1,i_2, \cdots, i_m)\in \{1,2\}^m$.

	The $X$-ray transform of $\bbf$ is given by
	\begin{equation}\label{Xf} 
		\begin{aligned}
			X\bbf(z,\btheta) &:=\int_{-\infty}^{\infty} \langle \bbf(z+t\btheta), \btheta^m \rangle dt,\quad (z,\btheta)\in\OM\times\sph.
		\end{aligned}
	\end{equation}

	Following directly from its definition
	\begin{align}\label{eq:xrayTransform_hom}
		X\bbf(x, -\btheta)= (-1)^m X\bbf(x,\btheta),
	\end{align}so that $\btheta \mapsto X\bbf(x,\btheta)$ is an even function for  $\bbf$ of an even order $m$, and an odd function for $\bbf$ of an odd order $m$.

	Lines $\ds L_{(\beta,\theta)}:=\{ e^{i\beta}+s e^{i\theta}: \; s\in\BR\}$ intersecting $\overline\OM$ are parametrized in coordinates $\ds \{(e^{i\beta},e^{i\theta}): \beta,\theta\in(-\pi,\pi]\}$ on the torus  $\Gam\times\sph$, and
	then   
	\begin{align}\label{eq:xrayTransform}
		X \bbf (e^{i \beta},e^{i \theta})  
		=\int_{-\infty}^{\infty} \langle \bbf ( e^{i \beta} +se^{i \theta}), \btheta^m \rangle ds
	\end{align}is also understood as a function on the torus.
	
	
	Since $\displaystyle L_{(\beta,\theta)}=L_{(2\theta -\beta-\pi,\theta)}= L_{(\beta,\theta+\pi)}=L_{(2\theta -\beta-\pi,\theta+\pi)},$ the set of lines intersecting $\overline\OM$ are quadruply covered when $(e^{i\beta},e^{i\theta})$ ranges over the entire torus $ \Gam\times \sph$. Moreover, the following symmetries are satisfied,
	\begin{align}
		&X \bbf (e^{i \beta},e^{i \theta})= (-1)^m X \bbf (e^{i \beta},e^{i (\theta+\pi)}), \text{ and } \label{X_ray_sym**}\\
		&X \bbf (e^{\i \beta},e^{ \i \theta})=(-1)^{m} X \bbf (e^{i(2\theta -\beta-\pi)},e^{i( \theta+\pi)}), \text{ for }  (e^{i \beta},e^{i \theta})\in\Gam\times\sph 
		\label{Xray_symm*};
	\end{align}
	see Figure \ref{fig:fanbeam1} below.

	
	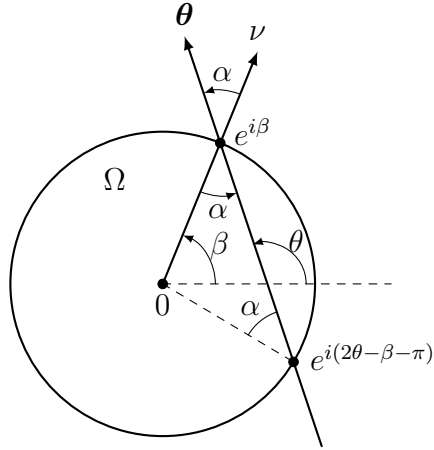
\begin{figure}[ht]
		\centering
		
		\pgfmathsetmacro{\Radius}{1.35} 
		\pgfmathsetmacro{\RRadius}{1.5*\Radius}
		\pgfmathsetmacro{\RRadiustheta}{0.67*\Radius}
		\pgfmathsetmacro{\Height}{1.25}
		\pgfmathsetmacro{\startAngle}{asin(\Height/\Radius)}
		
		\pgfmathsetmacro{\ptAngle}{\startAngle}
		\pgfmathsetmacro{\alphaAngle}{0.6*\startAngle}
		\pgfmathsetmacro{\thetaAngle}{\startAngle + \alphaAngle} 
		\pgfmathsetmacro{\gammaAngle}{\startAngle + 2*\alphaAngle-180} 
		\pgfmathsetmacro{\omegaAngle}{\thetaAngle - 90} 
		\pgfmathsetmacro{\xbpt}{cos(\ptAngle)*\Radius}
		\pgfmathsetmacro{\ybpt}{sin(\ptAngle)*\Radius} 
		
		\pgfmathsetmacro{\xpt}{cos(\ptAngle)*1.1*\RRadius}
		\pgfmathsetmacro{\ypt}{sin(\ptAngle)*1.1*\RRadius} 
		
		\pgfmathsetmacro{\Lxpt}{\xbpt + cos(\thetaAngle)}
		\pgfmathsetmacro{\Lypt}{\ybpt + sin(\thetaAngle)}
		
		\pgfmathsetmacro{\Lxxpt}{\xbpt - cos(\thetaAngle)*2.1*\Radius}
		\pgfmathsetmacro{\Lyypt}{\ybpt - sin(\thetaAngle)*2.1*\Radius}
		
		\pgfmathsetmacro{\omegaxpt}{cos(\omegaAngle)*\RRadius}
		\pgfmathsetmacro{\omegaypt}{sin(\omegaAngle)*\RRadius}
		
		\pgfmathsetmacro{\gammaxpt}{cos(\gammaAngle)*\Radius}
		\pgfmathsetmacro{\gammaypt}{sin(\gammaAngle)*\Radius}

		\begin{tikzpicture}[scale=1.5,cap=round,>=latex]
			
			\tikzset{
				thick/.style=      {line width=0.8pt},
				very thick/.style= {line width=1.1pt},
				ultra thick/.style={line width=1.6pt}
			}
			
			\coordinate[label=below:$0$] (O) at (0,0,0);
			\filldraw[black] (O) circle(1.2pt);
			\draw[thick] (O) circle (\Radius cm);
			
			\coordinate[label=above:$\Omega$] (OM) at(120:0.85cm);
			
			\coordinate (E) at (\RRadius,0,0);
			\draw[dashed] (O) -- (E);
			
			\coordinate (E1) at (\Radius,0,0);
			\coordinate (E2) at (\RRadiustheta,0,0);
			
			\coordinate (P) at (\xbpt,\ybpt);
			\filldraw[black] (P) circle(1.2pt);
			
			\coordinate[label=right:$e^{ i \beta}$] (Z) at (\xbpt+0.025,\ybpt+0.1);
			
			\coordinate (P1) at (\gammaxpt,\gammaypt);
			\filldraw[black] (P1) circle(1.2pt);
			\draw[dashed] (O) -- (P1);
			
			\coordinate[label=above:$\nu$] (nu) at (\xpt,\ypt);
			\draw[thick,->] (O) -- (nu);
			
			\coordinate[label=right:$e^{i (2\theta -\beta-\pi)}$] (Z1) at (\gammaxpt+0.05,\gammaypt+0.05);

			\coordinate[label=above:$\btheta$] (L) at  (\Lxpt,\Lypt);
			\draw[thick,->] (P) -- (L);
			
			\coordinate (LL) at  (\Lxxpt,\Lyypt);
			\draw[thick,-] (P) -- (LL);
			
			
			\tikzset{
				position label/.style={
					above = 3pt,
					text height = 1.5ex,
					text depth = 1ex
				},
				brace/.style={
					decoration={brace,mirror},
					decorate
				}
			}  
			\pic[draw,->,angle radius=.7cm,angle eccentricity=1.3,"$\beta$"] {angle=E--O--P}; 
			\pic[draw,->,angle radius=.55cm,angle eccentricity=1.29,"$\theta$"] {angle=E1--E2--P}; 
			\pic[draw,->,angle radius=.7cm,angle eccentricity=1.3,"$\alpha$"] {angle=nu--P--L}; 
			\pic[draw,->,angle radius=.7cm,angle eccentricity=1.3,"$\alpha$"] {angle=O--P--LL};
			\pic[draw,-,angle radius=.7cm,angle eccentricity=1.3,"$\alpha$"] {angle=P--P1--O};
			
		\end{tikzpicture}
		\caption{Fan-beam coordinates: $e^{i \beta}\in\Gam$,  $e^{i\theta}\in\sph$, and $\btheta = (\cos \theta, \sin \theta)$.
		} \label{fig:fanbeam1}
	\end{figure}

	If $\bbf= (f_{i_1i_2...i_m})$, with $i_1,...,i_m\in\{1,2\}$ is merely integrable in $\OM$, then $X \bbf$ may not be integrable on the torus. However, if  either \begin{align}\label{eq:f_refularity_cond1}
		\textnormal{supp } f_{i_1...i_m}  \subset \OM, 
		\mbox{ or }  f_{i_1...i_m}   \in L^p(\OM)\mbox{ for some }p >2,
		\end{align}
		then $X\bbf \in L^1(\Gamma\times\sph)$; see Proposition \ref{prop:fLp_regularity} in the appendix.
	

	We consider the partition of the torus into three parts: the ``outflux" part
	\begin{equation}\label{eq:Gam+}
		\Gam_+:=
		\left \{(e^{i \beta} ,e^{i(\beta+\alpha )} )\in \Gam \times\sph:\,  \beta\in (-\pi, \pi] , \; |\alpha|<\frac{\pi}{2} \right \},
	\end{equation}the  ``influx" part
	\begin{equation}\label{eq:Gam-}
		\Gam_-:=
		\left \{(e^{i \beta} ,e^{i(\beta+\alpha )} )\in \Gam \times\sph:\,  \beta\in (-\pi, \pi] , \; \frac{\pi}{2} < |\alpha|\leq\pi  \right \},
	\end{equation}and the (Lebesgue negligible) variety $\Gam_0:=(\Gam\times\sph)\setminus(\Gam_+\cup\Gam_-)$ parameterizing the tangent lines to the circle; see Figure \ref{fig:fanbeam1}.

	Our result gives necessary and sufficient conditions for a function $g\in L^1(\Gam\times\sph)$ to satisfy
	\begin{equation}\label{eq:g_mEven}
		g=
		\left\{
		\begin{array}{ll}
			X \bbf ,&\mbox{ on } \Gam_+,\\
			-X \bbf, &\mbox{ on } \Gam_-.
		\end{array}
		\right.
	\end{equation}
	The characterization is in terms of the Fourier coefficients
	\begin{align}\label{eq:gnk}
		g_{n,k} := \frac{1}{(2 \pi)^2} \int_{-\pi}^{\pi} \int_{-\pi}^{\pi} g(e^{i \beta}, e^{i\theta}) e^{-i n \theta}  e^{-i k \beta} d \theta d \beta,\; n,k \in \BZ
	\end{align}on the lattice $\BZ \times \BZ$. 
	
	The two indexes play a different role. Throughout, the first index is the Fourier mode in the angular variable on $\sph$, and we call it an \emph{angular mode}. The second index is the mode in the boundary variable on $\Gam$, and we call it a \emph{boundary mode}.

	The change of parity in the order of the tensor propagates to the statements of the results. For the sake of clarity we separate the two cases. 
	
	\subsection{ The case of an even order $m$-tensor }
	If $m$ is even, it is easy to check that $g$ in \eqref{eq:g_mEven} satisfy the symmetry relation
	\begin{align}\label{sym*}
		g (e^{i \beta},e^{i \theta})= g (e^{i(2\theta -\beta-\pi)},e^{i( \theta+\pi)}),\text{ for a.e. } (e^{i \beta},e^{i \theta})\in\Gam\times\sph.
	\end{align}Motivated by this relation, let $L^1_{\text{sym}}(\OM\times\sph)$ denote the space of integrable functions $g$ on the torus satisfying the symmetry in \eqref{sym*}. Since 
	$\left(e^{i\beta},e^{i \theta}\right)$ and $\left(e^{i(2\theta-\beta-\pi)},e^{( \theta+\pi)}\right)$ are either both in $\Gam_+$, or both in $\Gam_-$, we can consider  the spaces  $L^1_{\text{sym}}(\Gam_\pm)$ of integrable functions on the half-tori  satisfying \eqref{sym*}. Clearly,  $\ds g\in L^1_{sym}(\Gam\times\sph)$ if and only if its restrictions $\ds g|_{\Gam_\pm}\in L^1_{\text{sym}}(\Gam_\pm)$.
	
	
	Moreover, since $g$ in \eqref{eq:g_mEven} is odd with respect to the angular variable: 
	\begin{align}\label{sym**}
		g (e^{i \beta},e^{i \theta})=  -g(e^{i \beta},e^{i (\theta+\pi)}),
	\end{align}
	let us consider the subspace $L^1_{\text{sym,odd}}(\Gam\times\sph)$  of functions in $L^1_{\text{sym}}(\Gam\times\sph)$, which, in addition to satisfying \eqref{sym*}, they also satisfy \eqref{sym**}.
	
	
	
	
	In the statements below we use the notations in \cite{vladimirBook}:
	\begin{align*}
		L^1(\mathbf{S}^m; \OM) =\left \{\bbf =(f_{i_1 \cdots i_m}) \in \mathbf{S}^m( \OM): f_{i_1 \cdots i_m}\in L^1(\OM)\right \}
	\end{align*}
	for the space of real valued, symmetric tensor fields of order $m$ with integrable components.
	Similarly, 
	$C^\mu(\mathbf{S}^m; \OM)$, $0<\mu<1$, denotes the tensor fields of order $m$ with locally H\"older continuous components, and we use the notation $\jpn=(1+|n|^2)^{1/2}$.
	
	

	
	\begin{theorem}[Range characterization for even order tensors]\label{RangeCharac_Eventensor}
		
		(i)  
		Let $\bbf \in 	L^1(\mathbf{S}^m; \OM) $ be a real valued, integrable symmetric tensor field of even order $m = 2q, \, q\geq 0$, satisfying \eqref{eq:f_refularity_cond1},  and $g\in L^1_{\emph{sym,odd}}(\Gam\times\sph)$, with $$g= X \bbf \text{ on }\Gam_+ (\mbox{ and  }g=- X \bbf\mbox{ on }\Gam_-).$$
		Then  the Fourier coefficients $\{g_{n,k}\}_{n, k \in \BZ}$ of $g$ satisfy the following conditions:
		\begin{alignat}{3}
			&\text{Oddness}: && \quad g_{n,k}=0, && \quad \text{for all \emph{even} } n\in \BZ \text{, and all } k\in\BZ; \label{RTCond_odd_Eventensor}\\
			&\text{Conjugacy}: && \quad g_{-n,-k}= \ol{g_{n,k}},  &&\quad \text{for all} \;  n, k \in \BZ;\label{RTCond_reality_mEven}\\
			&\text{Symmetry}:
			&& \quad g_{n,k}= (-1)^{n+k}g_{n+2k,-k}, &&\quad \text{for all} \; n,k\in\BZ; \label{RT_FourierOdd_Eventensor}\\
			&\text{Moments}: &&\quad  g_{n,k}=  (-1)^k  g_{n+2k,-k}, &&\quad  \text{for all \emph{odd} } n\leq -(2q+1) \text{,  and all } k\leq 0.\label{RTCond_Eventensor}
		\end{alignat}
		(ii) Let $\{g_{n,k}\}$ be given for all \emph{odd} $n \leq -1$, and $k \in\BZ$ such that 
		\begin{align}\label{gnk_decay}
			\sum_{\substack{n\leq -1\\
					n =  \,\emph{odd}}} \jpn^{2} \sum_{k=-\INF }^\INF \lvert g_{n,k} \rvert < \INF, \quad \text{and} \quad
			\sum_{k=-\INF}^\INF  \jpk^{1+\mu}
			\sum_{\substack{n\leq -1\\
					n  =\,\emph{odd}}} \lvert g_{n,k} \rvert < \INF, 
		\end{align}for some $\mu>1/2$, and let 
		\begin{align} \label{g_sequence_mEVEN}
		g_{-n}: = \sum_{k=-\infty}^\infty g_{-n,k} \, e^{i k \beta}, \; \emph{ odd } n \leq -1, 
		\end{align}be defined on $\Gam$. 
		
		If $\{g_{n,k}\}$ satisfy \eqref{RT_FourierOdd_Eventensor} and \eqref{RTCond_Eventensor}, then there exists a real valued 
		$\bbf \in L^1(\mathbf{S}^m; \OM)$
		 such that the mapping 
		\begin{align}\label{eq:gnk_map}
			(\Gamma\times\sph) \ni (e^{i \beta}, e^{i\theta}) \longmapsto  
			2\re\left\{\sum_{\substack{n\leq-1\\
					n  = \,\emph{odd}}}  \sum_{k \in\BZ} g_{n,k} e^{i n \theta}  e^{i k \beta}\right\}
		\end{align} 
		defines a function in $L^1_{\emph{sym,odd}}(\Gam\times\sph)$, which coincides with $X \bbf$ on $\Gam_+$ (and with $- X \bbf$ on $\Gam_-$). 
		
		For $q\geq 1$,  $\bbf$  is uniquely determined by an element in the class
		\begin{align} \label{NART_mEvenPsiClass}
		\Psi_{g}^{\text{even}}:=
		&\left \{\left( \psi_{-1}, \psi_{-3}, \cdots \psi_{-(2q-1)}\right) \in 
		\left(W^{1,1}(\OM; \BC)\right)^{q}:
		\psi_{-(2j-1)} \big \lvert_{\Gam}= g_{-(2j-1)}, \; 1 \leq j \leq q \right\}.
	\end{align} Moreover if $\psi_{-1},\psi_{-3},...,\psi_{-2q+1}\in C^{1,\mu}(\ol\OM)$, then $\bbf\in C^\mu(\mathbf{S}^m; \OM)$. 
	
	If $q=0$, the class is empty and $\bbf$  is uniquely determined by the data. 
	\end{theorem}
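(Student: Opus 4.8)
The plan is to interpret $g$ as the boundary signature of a transport problem and to exploit the Bukhgeim $A$-analytic structure of the angular Fourier modes, extending the scalar scheme of \cite{sadiqTamasan22}. Associate with $\bbf$ the solution $u(z,\theta)$ of $\btheta\cdot\nabla u=\langle\bbf(z),\btheta^m\rangle$ in $\OM$ with vanishing influx trace, so that its outflux trace on $\Gam_+$ is $X\bbf$; the sign convention in \eqref{eq:g_mEven} together with the symmetries \eqref{X_ray_sym**}--\eqref{Xray_symm*} then renders $g$ odd in the angular variable, and on $z=e^{i\beta}$ the double Fourier coefficients of the trace are exactly the $g_{n,k}$ of \eqref{eq:gnk}. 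Using $\btheta\cdot\nabla=e^{i\theta}\partial+e^{-i\theta}\overline{\partial}$ with $\partial,\overline{\partial}$ the Cauchy--Riemann operators and expanding $u=\sum_n u_n(z)e^{in\theta}$, the equation becomes the system $\partial u_{n-1}+\overline{\partial}u_{n+1}=F_n$, where $\{F_n\}$ are the angular modes of $\langle\bbf,\btheta^m\rangle$, nonzero only for even $n$ with $|n|\le 2q$. Consequently the low odd modes $u_{-1},\dots,u_{-(2q-1)}$ (together with their conjugates) carry the tensor through $\{F_n\}$, while the high odd modes $u_{-(2q+1)},u_{-(2q+3)},\dots$ satisfy the source-free relations $\overline{\partial}u_{-\nu}+\partial u_{-\nu-2}=0$, i.e. form an $A$-analytic sequence whose interior is recovered from its boundary trace by the Bukhgeim--Cauchy operator.

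For necessity (part (i)) each constraint is read off from a symmetry of $X\bbf$ or from the finite angular bandwidth of the source. Oddness \eqref{RTCond_odd_Eventensor} is \eqref{sym**} in Fourier form; conjugacy \eqref{RTCond_reality_mEven} records that $g$ is real because $\bbf$ is real; and symmetry \eqref{RT_FourierOdd_Eventensor} is the Fourier transcription of the line-reparametrization identity \eqref{sym*}, obtained by the substitution $(\beta,\theta)\mapsto(2\theta-\beta-\pi,\theta+\pi)$ in \eqref{eq:gnk}. The moment conditions \eqref{RTCond_Eventensor} are the properly tensorial constraint: they encode that $\langle\bbf,\btheta^m\rangle$ has bandwidth $|n|\le 2q$, equivalently that the high negative modes are $A$-analytic, so that their boundary traces obey the Bukhgeim--Hilbert relations whose Fourier-lattice form is \eqref{RTCond_Eventensor}. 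I note that, combined with \eqref{RT_FourierOdd_Eventensor}, condition \eqref{RTCond_Eventensor} forces $g_{n,k}=0$ for odd $n\le-(2q+1)$ and $k\le0$, the expected finite-order signature.

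For sufficiency (part (ii)) I reverse the construction. From the prescribed $\{g_{n,k}\}$ I form the boundary functions $g_{-(2j-1)}$ on $\Gam$ via \eqref{g_sequence_mEVEN} and choose arbitrary $W^{1,1}(\OM;\BC)$ extensions $\psi_{-(2j-1)}$, $1\le j\le q$, of the first $q$ of them --- an element of the class $\Psi_{g}^{\text{even}}$ in \eqref{NART_mEvenPsiClass}, whose extension freedom is the gauge freedom of the non-injective $X$. Applying the Bukhgeim--Cauchy operator to the remaining boundary data $g_{-(2q+1)},g_{-(2q+3)},\dots$ produces the high modes $u_{-(2q+1)},u_{-(2q+3)},\dots$; by \eqref{RTCond_Eventensor}, read as the Bukhgeim--Hilbert solvability constraints, these $A$-analytic modes indeed have boundary traces $g_{-(2q+1)},\dots$, while \eqref{gnk_decay} secures convergence and that \eqref{eq:gnk_map} defines a function in $L^1_{\text{sym,odd}}(\Gam\times\sph)$. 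The finitely many even source modes $F_n$, $|n|\le 2q$, are then the appropriate $\partial,\overline{\partial}$ combinations of $\psi_{-1},\dots,\psi_{-(2q-1)}$, the extremal mode $u_{-(2q+1)}$, and their conjugates (supplied by \eqref{RTCond_reality_mEven}); inverting the fixed linear correspondence $\bbf\mapsto\{F_n\}$ assembles the $2q+1$ components $f_{i_1\cdots i_m}$ of a real symmetric $\bbf\in L^1(\mathbf{S}^m;\OM)$, and the transport argument run forward gives $X\bbf=g$ on $\Gam_+$.

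The principal obstacle is the consistency between the two halves of the construction: one must show that the source reconstructed from the low modes $\psi$ and the extremal mode $u_{-(2q+1)}$ reproduces, upon forward transport, exactly the prescribed high boundary data --- equivalently that \eqref{RTCond_Eventensor} is both necessary and sufficient for this matching --- and one must control the full infinite $A$-analytic system in $W^{1,1}(\OM)$, hence $X\bbf\in L^1$, under \eqref{gnk_decay}; the requisite estimates for the Bukhgeim--Cauchy and Bukhgeim--Hilbert operators are the explicit mapping properties announced in the introduction. The H\"older statement $\bbf\in C^\mu(\mathbf{S}^m;\OM)$ for $\psi_{-(2j-1)}\in C^{1,\mu}(\overline\OM)$ follows from Schauder-type estimates for these operators together with the fact that $\bbf$ is a first-order differential expression in the $\psi$'s. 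Finally, when $q=0$ the class $\Psi_{g}^{\text{even}}$ is empty, no interior freedom remains, and $\bbf$ is the unique function determined by the data, recovering the scalar characterization of \cite{sadiqTamasan22}.
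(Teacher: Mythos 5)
Your proposal is correct and follows essentially the same route as the paper: the transport model with odd angular parity, the $A$-analytic structure of the high negative odd modes, the Bukhgeim--Cauchy operator for the interior construction, the Fourier-lattice form of the Bukhgeim--Hilbert constraint $(I+i\HT)\bg=\bzero$ yielding the moment conditions \eqref{RTCond_Eventensor}, and the gauge class $\Psi_g^{\text{even}}$ supplying the low modes from which the tensor components are assembled by $\del,\dba$ combinations. The "principal obstacle" you flag (trace matching of the Bukhgeim--Cauchy extension under \eqref{RTCond_Eventensor}) is resolved in the paper exactly as you anticipate, via the explicit mapping property of $\HT$ on Fourier coefficients (Theorem \ref{newmapping_Hilbert}) combined with Theorem \ref{NecSuf_BukhgeimHilbert_Thm}.
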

	
	The oddness and conjugacy constraints in \eqref{RTCond_odd_Eventensor} and \eqref{RTCond_reality_mEven} are not intrinsic to the $X$-ray transform. The symmetry constraints \eqref{RT_FourierOdd_Eventensor} merely account for each line being doubly parametrized in $\Gam_+$, and they are shared by any function on the torus satisfying the symmetry \eqref{sym*}; see Lemma \ref {lem:evenness} in the appendix.

	The following result is a direct consequence of the algebraic  interaction of the range conditions in \eqref{RTCond_odd_Eventensor}, \eqref{RTCond_reality_mEven}, 
	\eqref{RT_FourierOdd_Eventensor}, and \eqref{RTCond_Eventensor}. To illustrate the result of this interaction,  let us consider the partition of $\ds \BZ^{-}\times\BZ$ as in  Figure \ref{lattice_Eventensor}: 
	
	For an even integer $m \geq 0$,  the white region $W = W^+ \cup W^-$, where
	\begin{equation}\label{White_region}
		\begin{aligned}
			W^+ & := \left \{ (n,k) \in \BZ^{-} \times \BZ^+  \; : \text{odd } n\leq -m-1, \; \text{and} \; 0 \leq k \leq - \frac{n+m+1}{2}            \right \},   \\
			W^- & := \left \{ (n,k) \in \BZ^{-} \times \BZ^-  \; : \text{odd }n\leq -m-1, \; \text{and} \;  k \leq 0            \right \},
		\end{aligned}
	\end{equation}
	and the green region $G=G_L \cup G_R$, where
	\begin{equation}\label{Green_region}
		\begin{aligned}
			G_L & := \left \{ (n,k) \in \BZ^{-} \times \BZ^+  \; : \text{odd }n\leq -1, \; \text{and} \;     \frac{-n+1}{2}  \leq k \leq -n          \right \}.\\
			G_R & := \left \{(n,k) \in \BZ^{-} \times \BZ^+  \; : \text{odd }n\leq -1, \; \text{and} \;   k > -n            \right \}.
		\end{aligned}
	\end{equation}
	Moreover, for even integer $m \geq 2$, the  red  region $R = R^+ \cup R^-$, where 
	\begin{equation}\label{Red_region}
		\begin{aligned}
			R^+ & := \left \{ (n,k) \in \BZ^{-} \times \BZ^+  \; : \text{odd }n\leq -1, \; \text{and} \;  - \frac{n+m-1}{2}  \leq k \leq - \frac{n+1}{2}            \right \},   \\
			R^- & := \left \{ (n,k) \in \BZ^{-} \times \BZ^-  \; : n {\text{ odd}}, -m+1 \leq n\leq -1, \; \text{and} \;  k \leq 0            \right \}.
		\end{aligned}
	\end{equation}
	If $m = 0$, then the corresponding red region  $R$ defined by \eqref{Red_region} is empty.
	
	\begin{remark}\label{InteractionEven}
	The modes in the region $R$ are affected solely by the symmetry \eqref{RT_FourierOdd_Eventensor} due to the double parametrization of the lines in $\Gam_+$. The modes in the region $G$  are affected both by the symmetry and the reality of the tensor  \eqref{RTCond_reality_mEven}. The white region contains the modes affected by the nature of the operator (integration) along the line \eqref{RTCond_Eventensor} in combination with the symmetry. See  Figure \ref{lattice_Eventensor}.
\end{remark}	
	
	\begin{figure}[ht!]
		\centering
		\begin{tikzpicture}[scale=0.9,cap=round,>=latex]
			\tikzset{help lines/.style={color=blue!50}}
			\draw[thick,step=1cm,help lines] (0,0) grid (15,10);
			\draw[ultra thin,step=1cm,help lines] (0,0) grid (15,10);
			
			\draw[dashed, -latex] (15.2,4) -- (-0.7,4) node[anchor=east] {$-n$};
			\draw[dashed,-latex] (15,0) -- (15,11) node[anchor=east] {$k$};
			\foreach \x in {6,7}
			{
				\pgfmathtruncatemacro{\rx}{-15+2*\x};
				\node [anchor=north] at (2*\x,3.95)  {\rx};
				\draw [ultra thick] (2*\x,4.1) -- (2*\x,3.9);
			} 
			
			\foreach \x in {0,1,...,5}
			{
				\pgfmathtruncatemacro{\rx}{-15+2*\x};
				\draw [ultra thick] (2*\x,4.1) -- (2*\x,3.9);
			} 
			
			\foreach \y in {0,1,...,10}
			{
				\pgfmathtruncatemacro{\rr}{ -4 + \y}; 
				\node [anchor=east] at (15.8,\y)  {\rr};
			} 
			
			\draw[thick,black] (15,4) to (9,10);
			
			\draw[ultra thick,red] (14,4) to (2,10);
			\draw[ultra thick,red] (14,0) to (14,4);
			
			\draw[ultra thick,red] (12,4) to (0,10);
			\draw[ultra thick,red] (12,0) to (12,4);
			\node [anchor=north] at (10,3.95)  {$\cdots \cdots$};
			
			\draw[ultra thick,red] (8,4) to (0,8);
			\draw[ultra thick,red] (8,0) to (8,4);
			\node [anchor=north] at (8,3.95)  {$-m+1$};
			
			\draw[ultra thick,black] (6,4) to (0,7);
			\draw[ultra thick,black] (6,0) to (6,4);
			\node [anchor=north] at (6,3.95)  {$-m-1$};
			\node [anchor=north] at (4,3.95)  {$-m-3$};
			\node [anchor=north] at (2,3.95)  {$\cdots \cdots $};
			
			\draw[fill=red,opacity=0.4]  (6,0) -- (14,0) -- (14,4) -- (2,10) -- (0,10) -- (0,7) -- (6,4)-- cycle;
			\draw[fill=blue!20!black!30!green,,opacity=0.4]  (14,4) -- (14,10) -- (2,10)  -- cycle;
			
			\coordinate[label=below:$n+2k \leq -(m+1)$] (origin) at (2.2,2.8);
			\coordinate[label=below:$n \leq -(m+1)$] (origin) at (2.7,2.2);

			\draw[black] (14,5) circle (5pt);  
			\draw[black] (12,7) circle (5pt);  
			\draw[black] (10,9) circle (5pt);  
			
			\node at (14,6)[draw,rectangle,minimum height=0.1cm]{}; 
			\node at (12,6)[draw, rectangle,minimum height=0.1cm]{}; 

			\node[isosceles triangle, isosceles triangle apex angle=60,
			draw, rotate=270,  
			minimum size =0.01cm] (T1)at (14,7){};
			
			\node[isosceles triangle, isosceles triangle apex angle=60,
			draw, rotate=270,  
			minimum size =0.01cm] (T2)at (10,7){};
			
			\node[isosceles triangle, isosceles triangle apex angle=60,
			draw, rotate=240, fill=red!50,
			minimum size =0.01cm] (T3)at (14,3){};
			
			\node[isosceles triangle, isosceles triangle apex angle=60,
			draw, rotate=240,  fill=red!50,
			minimum size =0.01cm] (T4)at (12,5){};
			
			\node[isosceles triangle, isosceles triangle apex angle=60,
			draw, rotate=270, fill=red!50,
			minimum size =0.01cm] (TR2a)at (12,0){};
			
			\node[isosceles triangle, isosceles triangle apex angle=60,
			draw, rotate=270,  fill=red!50,
			minimum size =0.01cm] (TR2b) at (4,8){};
			
			\node[isosceles triangle, isosceles triangle apex angle=60,
			draw, rotate=90,  fill=red!50,
			minimum size =0.01cm] (Trlast)at (8,3){};
			
			\node[isosceles triangle, isosceles triangle apex angle=60,
			draw, rotate=90,  fill=red!50,
			minimum size =0.01cm] (Trlast1)at (6,5){};
			
			\node at (14,8)[draw,trapezium,minimum height=0.1cm]{}; 
			\node at (8,8)[draw,trapezium,minimum height=0.1cm]{}; 
			
			\node at (4,9)[draw,fill=red!50,trapezium,minimum height=0.1cm]{}; 		
			
			\node at (12,1)[draw,fill=red!50,trapezium,minimum height=0.1cm]{}; 
			\node at (6,7)[draw,fill=red!50,trapezium,minimum height=0.1cm]{}; 		
			
			\draw[black] (12,8) ellipse (3pt and 6pt);
			\draw[black] (10,8) ellipse (3pt and 6pt);
			
			\draw[black,fill=red!50] (14,0) ellipse (6pt and 8pt);
			\draw[black,fill=red!50,] (6,8) ellipse (6pt and 8pt);
			
			\draw[black,fill=red!50] (12,3) ellipse (9pt and 7pt);
			\draw[black,fill=red!50] (10,5) ellipse (9pt and 7pt);
			\draw[black,fill=red!50]  (2,9) ellipse (3pt and 6pt);
			
			\draw[black,fill=red!50] (8,2) ellipse (8pt and 6pt);
			\draw[black,fill=red!50,] (4,6) ellipse (8pt and 6pt);
			
			
			\node [rectangle, draw, xslant=0.4] at (14,9) {};
			\node [rectangle, draw, xslant=0.4] at (6,9) {};
			
			\node [rectangle, draw, fill=red!50, xslant=-0.3] at (14,1) {};
			\node [rectangle, draw, fill=red!50, xslant=-0.3] at (8,7) {};
			
			\node [rectangle, draw, fill=red!50, xslant=0.4] at (12,2) {};
			\node [rectangle, draw, fill=red!50, xslant=0.4] at (8,6) {};
			
			\node [rectangle, draw, fill=red!50, xslant=0.6] at (0,8) {};
			\node [rectangle, draw, fill=red!50, xslant=0.6] at (8,0) {};

			\node[regular polygon, draw,
			regular polygon sides = 5] (p) at (12,9) {};
			
			\node[regular polygon, draw,
			regular polygon sides = 5] (p) at (8,9) {};
			
			\node[regular polygon, 	draw, fill=red!50,
			regular polygon sides = 5] (p) at (8,1) {};
			
			\node[regular polygon, draw, fill=red!50,
			regular polygon sides = 5] (p) at (2,7) {};

			\node[regular polygon, 	draw, fill=red!50,
			regular polygon sides = 6] (p) at (14,2) {};
			
			\node[regular polygon, draw, fill=red!50,
			regular polygon sides = 6] (p) at (10,6) {};

		\end{tikzpicture}
		\caption{An even order $m$-tensor field $\bbf$ is determined by the odd negative angular modes on or above the diagonal $k=-n$ (green region), and  the odd negative angular modes (marked red) on the $\frac{m}{2}$ red lines $n+2k=-(m+1)$ for $k\geq0$. All the odd non-positive angular modes on and below the line $n+2k=-(m+1)$, and left of the line 
			$n =-(m+1)$   vanish. 
			\label{lattice_Eventensor}
		}
	\end{figure}
	
	

	
	
	\begin{cor}\label{corollary}
		(i) Let $\bbf \in 	L^1(\mathbf{S}^m; \OM) $ be a real valued, integrable symmetric tensor field of even order $m = 2q, \, q\geq 0$. 
		Let $g\in L^1_{\emph{sym,odd}}(\Gam\times\sph)$ coincide with $X \bbf$ on $\Gam_+$, and $\{g_{n,k}\}$ be its Fourier coefficients.  Then, for all \emph{odd} $n\leq -1$,
		\begin{equation}\label{RTCond2negT}
			g_{n,k}=\left\{
			\begin{array}{ll}
				0,& \text{ if }
				(n,k) \in W,\\
				(-1)^{1+k}\overline{g_{-n-2k,k}},& \text{ if }
				(n,k) \in G,\\
				(-1)^{1+k}g_{n+2k,-k},& \text{ if }
				(n,k) \in R,\\
			\end{array}
			\right.
		\end{equation}see Figure \ref{lattice_Eventensor}.

		(ii) Let $\{g_{n,k}\}$ be given for $	(n,k) \in R^+\cup G_L$, such that 
		\begin{align}\label{gnk_decay_Eventensor}
			\sum_{\substack{	(n,k) \in R^+ \cup G_L}} \jpn^{2} \lvert g_{n,k} \rvert < \INF, \quad \text{and} \quad
			\sum_{\substack{	(n,k) \in R^+ \cup G_L}}  \jpk^{1+\mu} 	\lvert g_{n,k} \rvert < \INF, 
		\end{align}for some $\mu>1/2$. Extend $g_{n,k}$'s from $R^+\cup G_L$ to $R\cup G$ via the relations \eqref{RTCond2negT}. Then there exists a real valued $\bbf \in L^1(\mathbf{S}^m; \OM)$, such that the mapping 
		\begin{align}\label{eq:gnk_map}
			(\Gamma\times\sph) \ni (e^{i \beta}, e^{i\theta}) \longmapsto  2\re\left\{  \sum_{\substack{	(n,k) \in R \cup G}} g_{n,k} e^{i n \theta}  e^{i k \beta}\right\}
		\end{align} 
		is precisely $X \bbf$ on $\Gam_+$ and $-X \bbf$ on $\Gam_-$. 
		For $q\geq 1$,  $\bbf$  is uniquely determined by an element in the class $\Psi_{g}^{\text{even}}$ in \eqref{NART_mEvenPsiClass}. If $q=0$, the zero order tensor is uniquely determined by the data.
\end{cor}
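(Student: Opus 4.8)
The plan is to obtain both parts as bookkeeping consequences of Theorem \ref{RangeCharac_Eventensor}, viewing \eqref{RTCond2negT} as the explicit form taken by the four range conditions once one restricts to the odd angular modes $n\leq-1$ and records how the symmetry involution and the conjugacy interact region by region. Throughout I use that $(-1)^n=-1$ on odd $n$, and that by \eqref{RTCond_odd_Eventensor} only odd angular modes carry information.

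For part (i) I read off the three alternatives in \eqref{RTCond2negT} directly. On $R$ the identity $g_{n,k}=(-1)^{1+k}g_{n+2k,-k}$ is just the symmetry \eqref{RT_FourierOdd_Eventensor} written for odd $n$. On $G$, where $n+2k>0$, I apply \eqref{RT_FourierOdd_Eventensor} and then the conjugacy \eqref{RTCond_reality_mEven} to the positive-mode coefficient $g_{n+2k,-k}=\overline{g_{-n-2k,k}}$, obtaining $g_{n,k}=(-1)^{1+k}\overline{g_{-n-2k,k}}$. The vanishing on $W$ is the one place where two conditions truly interact: on $W^-$, i.e. for odd $n\leq-(m+1)$ and $k\leq0$, the moment relation \eqref{RTCond_Eventensor} gives $g_{n,k}=(-1)^kg_{n+2k,-k}$ while \eqref{RT_FourierOdd_Eventensor} gives $g_{n,k}=-(-1)^kg_{n+2k,-k}$, forcing $g_{n,k}=0$; and for $(n,k)\in W^+$ the symmetry image $(n+2k,-k)$ lies in $W^-$, so $g_{n,k}=(-1)^{1+k}g_{n+2k,-k}=0$ as well. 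A short interval count for each fixed odd $n$, separating $n\leq-(m+1)$ from $-m+1\leq n\leq-1$, confirms that $W$, $R$, $G$ partition the half-lattice, so \eqref{RTCond2negT} accounts for every odd $n\leq-1$.

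For part (ii) I reverse this and reduce to Theorem \ref{RangeCharac_Eventensor}(ii). Given the data on $R^+\cup G_L$, I extend it to all odd $n\leq-1$, $k\in\BZ$, by declaring the family equivariant under the symmetry map $(n,k)\mapsto(n+2k,-k)$ and its conjugacy-twisted companion $(n,k)\mapsto(-n-2k,k)$, with fundamental-domain data on $R^+\cup G_L$ and identically zero on $W$; this is precisely the extension by \eqref{RTCond2negT}. I then verify the hypotheses of Theorem \ref{RangeCharac_Eventensor}(ii): the symmetry \eqref{RT_FourierOdd_Eventensor} holds on $R$ by construction, on $G$ because the positive-mode coefficient is by the conjugacy convention $g_{n+2k,-k}=\overline{g_{-n-2k,k}}$, and on $W$ trivially since both members vanish; the moment condition \eqref{RTCond_Eventensor} holds trivially on its domain $W^-$, every coefficient in sight being zero. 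Theorem \ref{RangeCharac_Eventensor}(ii) then produces the real-valued $\bbf\in L^1(\mathbf{S}^m;\OM)$, together with the uniqueness statement via the class $\Psi_{g}^{\text{even}}$ of \eqref{NART_mEvenPsiClass} for $q\geq1$ and the direct determination of the $0$-tensor for $q=0$. Finally, since $g_{n,k}=0$ on $W$, the Corollary's series over $R\cup G$ agrees term-by-term with the Theorem's series over all odd $n\leq-1$, so it coincides with $X\bbf$ on $\Gam_+$ and with $-X\bbf$ on $\Gam_-$.

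The transfer of the summability \eqref{gnk_decay_Eventensor} into \eqref{gnk_decay}, together with the claim that $R^+\cup G_L$ is an exact fundamental domain while $W$ is invariant, is the step I expect to require the most care, and I would relegate the detailed tiling to the appendix. Two elementary observations make the decay go through: the boundary mode is invariant under both identifications (each preserves $|k|$), so the $\langle k\rangle^{1+\mu}$-weighted sum transfers with multiplicity bounded in terms of $m$; and the angular mode is controlled, being bounded by $\langle m-1\rangle$ on $R^-$ and satisfying $\langle n\rangle\leq\langle n_0\rangle$ for the reduction of a $G_R$ point to its generator $n_0=-n-2k$ in $G_L$, which handles the $\langle n\rangle^2$-weighted sum. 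I would also note the two self-paired loci where the extension is self-referential: on the column $k=0$ the symmetry forces the modes to vanish (consistent, since these lie outside the given region $R^+\cup G_L$), while on the diagonal $k=-n\subset G_L$ the fixed-point instance of \eqref{RTCond2negT} imposes $g_{n,-n}\in\BR$, so the given data is understood to satisfy this residual reality constraint. Once the combinatorial tiling and these weight estimates are in place, the existence and uniqueness of $\bbf$ are inherited verbatim from Theorem \ref{RangeCharac_Eventensor}.
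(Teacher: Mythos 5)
Your proposal is correct and follows essentially the paper's own route: the paper presents this corollary as a direct consequence of the algebraic interaction of the oddness, conjugacy, symmetry and moment conditions of Theorem \ref{RangeCharac_Eventensor}, which is precisely the region-by-region bookkeeping you carry out in part (i), and your part (ii) reduction --- extend the data from $R^+\cup G_L$ by the involutions, set it to zero on $W$, verify \eqref{RT_FourierOdd_Eventensor}, \eqref{RTCond_Eventensor} and the transfer of \eqref{gnk_decay_Eventensor} into \eqref{gnk_decay}, then invoke Theorem \ref{RangeCharac_Eventensor}(ii) --- is the intended argument. One quibble: with the paper's convention that $\BZ^+$ contains $0$, the points $(n,0)$ with $-m+1\leq n\leq -1$ lie \emph{inside} $R^+$ (not outside it, as your parenthetical asserts), so the forced vanishing on the column $k=0$, like the reality on the diagonal $k=-n$, is a consistency constraint on the \emph{given} data; you identify both loci correctly, and the paper itself glosses over this point.
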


	We formulate next the odd-order tensor case.

	\subsection{ The case of odd order $m$-tensors}
	
	If $m$ is odd, it is easy to check that $g$ in \eqref{eq:g_mEven} obeys the skew-symmetry relation
	\begin{align}\label{skew*}
		g (e^{i \beta},e^{i \theta})= -g (e^{i(2\theta -\beta-\pi)},e^{i( \theta+\pi)}),\text{ for a.e. } (e^{i \beta},e^{i \theta})\in\Gam\times\sph.
	\end{align}
	This motivates to work in the space $L^1_{\text{skew}}(\OM\times\sph)$ of integrable functions $g$ on the torus satisfying the skew-symmetry in \eqref{skew*}.  Since $\left(e^{i\beta},e^{i \theta}\right)$ and $\left(e^{i(2\theta-\beta-\pi)},e^{( \theta+\pi)}\right)$ are either both in $\Gam_+$, or both in $\Gam_-$, we can consider  the spaces  $L^1_{\text{skew}}(\Gam_\pm)$ of integrable functions on the half-tori $\Gam_\pm$ satisfying \eqref{skew*}. Clearly,  $\ds g\in L^1_{skew}(\Gam\times\sph)$ if and only if its restrictions $\ds g|_{\Gam_\pm}\in L^1_{\text{skew}}(\Gam_\pm)$.

	Moreover, since $g$ in \eqref{eq:g_mEven} is even with respect to the angular variable: 
	\begin{align}\label{skew**}
		g (e^{i \beta},e^{i \theta})=  g(e^{i \beta},e^{i (\theta+\pi)}),
	\end{align}
	we further consider the subspace $L^1_{\text{skew,even}}(\Gam\times\sph)$  of functions in $L^1_{\text{skew}}(\Gam\times\sph)$, which, in addition to satisfying \eqref{skew*},  satisfy \eqref{skew**}.

	Our result gives necessary and sufficient conditions for a function $g\in L^1_{\text{skew,even}}(\Gam\times\sph)$ to coincide with $X\bbf$ on $\Gam_+$ (and implicitly with $-X\bbf$ on $\Gam_-$), for some symmetric tensor $\bbf$ of odd order $m$.

	
	\begin{theorem}[Range characterization for odd order tensors]\label{RangeCharac_Oddtensor}
		
		(i)  Let $\bbf \in 	L^1(\mathbf{S}^m; \OM) $ be a real valued, integrable symmetric tensor field of odd order $m = 2q+1, \, q\geq 0$, satisfying \eqref{eq:f_refularity_cond1}, and $g\in L^1_{\emph{skew,even}}(\Gam\times\sph)$, with $$g= X \bbf \text{ on }\Gam_+.$$
		Then  the Fourier coefficients $\{g_{n,k}\}_{n, k \in \BZ}$ of $g$ satisfy the following conditions:
		\begin{alignat}{3}
			&\text{Evenness}: && \quad g_{n,k}=0, && \quad \text{for all \emph{odd} } n\in \BZ \text{, and all } k\in\BZ; \label{RTCond_even_Oddtensor}\\
			&\text{Conjugacy}: && \quad g_{-n,-k}= \ol{g_{n,k}},  &&\quad \text{for all} \;  n, k \in \BZ;\label{RTCond_reality_mOdd}\\
			&\text{Symmetry}:
			&& \quad g_{n,k}= -(-1)^{n+k}g_{n+2k,-k}, &&\quad \text{for all} \; n,k\in\BZ; \label{RT_FourierEven_Oddtensor}\\
			&\text{Moments}: &&\quad  g_{n,k}=  (-1)^k  g_{n+2k,-k}, &&\quad  \text{for all \emph{even} } n\leq -(2q+2) \text{,  and all } k\leq 0.
			\label{RTCond_Oddtensor}
		\end{alignat}
		(ii) Let $\{g_{n,k}\}$ be given for all \emph{even} $n \leq -2$, and $k \in\BZ$ such that 
		\begin{align}\label{gnk_decay_Oddtensor}
			\sum_{\substack{n\leq -2\\
					n =  \,\emph{even}}} \jpn^{2} \sum_{k=-\INF }^\INF \lvert g_{n,k} \rvert < \INF, \quad \text{and} \quad
			\sum_{k=-\INF}^\INF  \jpk^{1+\mu}
			\sum_{\substack{n\leq -2\\
					n  =\,\emph{even}}} \lvert g_{n,k} \rvert < \INF, 
		\end{align}for some $\mu>1/2$,
		and let 
		\begin{align} \label{g_sequence_mODD}
		g_{-n}: = \sum_{k=-\infty}^\infty g_{-n,k} \, e^{i k \beta}, \; \emph{ even } n \leq -2, 
		\end{align}be defined on $\Gam$.

		If $\{g_{n,k}\}$ satisfy \eqref{RT_FourierEven_Oddtensor} and \eqref{RTCond_Oddtensor}, then there exists a real valued $\bbf \in L^1(\mathbf{S}^m; \OM)$ such that the mapping 
		\begin{align}\label{eq:gnk_map_Oddtensor}
			(\Gamma\times\sph) \ni (e^{i \beta}, e^{i\theta}) \longmapsto  
			2\re\left\{\sum_{\substack{n\leq-2\\
					n  = \,\emph{even}}}  \sum_{k \in\BZ} g_{n,k} e^{i n \theta}  e^{i k \beta}\right\}
		\end{align} 
		defines a function in $L^1_{\emph{skew,even}}(\Gam\times\sph)$, which coincides with $X \bbf$ on $\Gam_+$ (and with $- X \bbf$ on $\Gam_-$). 
		For $q\geq 0$,  $\bbf$  is uniquely determined by an element in the class
		\begin{align}\label{NART_mOddPsiClass}
	\Psi_{g}^{\emph{odd}}:=
		&\left \{ \left(\psi_0,  \psi_{-2},  \cdots ,\psi_{-2q}\right) \in W^{1,1}(\OM ;\BR)\times \left(W^{1,1}(\OM; \BC)\right)^{q}:  \psi_{-2j} \big \lvert_{\Gam}= g_{-2j}, \; 0 \leq j \leq q \right\}.
	\end{align}  Moreover if $\psi_{0},\psi_{-2},...,\psi_{-2q}\in C^{1,\mu}(\ol\OM)$, then $\bbf\in C^\mu(\mathbf{S}^m; \OM)$. 
\end{theorem}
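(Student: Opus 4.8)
The plan is to mirror the even-order argument of Theorem~\ref{RangeCharac_Eventensor}, tracking the parity change $m=2q\mapsto m=2q+1$ throughout. The organizing device is the transport equation: for a symmetric $m$-tensor $\bbf$ one seeks $u(z,\theta)=\sum_{n}u_n(z)e^{in\theta}$ solving $(\btheta\cdot\nabla)u=\langle\bbf,\btheta^m\rangle$ on $\OM\times\sph$, normalized so that its outflux trace on $\Gam_+$ is $X\bbf$. Writing $\btheta\cdot\nabla=e^{i\theta}\partial+e^{-i\theta}\bar\partial$ and expanding $\langle\bbf(z),\btheta^m\rangle=\sum_{|l|\le m,\,l\equiv m\,(2)}\tilde f_l(z)e^{il\theta}$ (only odd harmonics $l$ occur when $m$ is odd, with $\tilde f_{-l}=\overline{\tilde f_l}$), the equation decouples mode-by-mode into $\partial u_{n-1}+\bar\partial u_{n+1}=\tilde f_n$, with $\tilde f_n=0$ for $|n|>m$. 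Since the source sits on the odd harmonics, the \emph{odd} modes obey source-free relations with vanishing trace and hence vanish, so $u$ carries only even modes; among these, the band $|n|\le m+1$ encodes the tensor through $\tilde f_n=\partial u_{n-1}+\bar\partial u_{n+1}$, while the even tail $(u_{-(m+1)},u_{-(m+3)},\dots)$ forms a Bukhgeim $A$-analytic sequence. The $q+1$ lowest even modes $u_0,u_{-2},\dots,u_{-2q}$ are the free data: their boundary traces on $\Gam$ are exactly $g_0,g_{-2},\dots,g_{-2q}$ of \eqref{g_sequence_mODD}, while their interior extensions parametrize the kernel of $X$ on symmetric $m$-tensors, the source of the non-uniqueness. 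This is the origin of the class $\Psi_g^{\mathrm{odd}}$ in \eqref{NART_mOddPsiClass}.

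For necessity (part (i)) the four conditions come from four separate sources. Evenness \eqref{RTCond_even_Oddtensor} is immediate from \eqref{skew**}: $\pi$-periodicity in $\theta$ kills the odd angular harmonics. Conjugacy \eqref{RTCond_reality_mOdd} is the reality of $g$, itself a consequence of $\bbf$ being real valued. The symmetry \eqref{RT_FourierEven_Oddtensor} is not intrinsic to $X$; it records that each line is doubly parametrized inside $\Gam_+$, is shared by every $g\in L^1_{\mathrm{skew,even}}$ obeying \eqref{skew*}, and is the odd-order analogue of Lemma~\ref{lem:evenness} (the extra sign being the $(-1)^m$ of \eqref{Xray_symm*}). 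The moment conditions \eqref{RTCond_Oddtensor} carry the intrinsic content: they are precisely the constraints that the boundary traces of the even $A$-analytic tail must satisfy. I would read these off from the Fourier form of the Bukhgeim--Hilbert transform recalled in Section~\ref{sec:L2map}: $A$-analyticity forces a Hilbert-type relation on $\Gam$ between consecutive tail traces, and evaluating it on the Fourier lattice collapses exactly to $g_{n,k}=(-1)^k g_{n+2k,-k}$ for even $n\le -(2q+2)$ and $k\le 0$.

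For sufficiency (part (ii)) I would run this construction in reverse. Given $\{g_{n,k}\}$ on even $n\le -2$ obeying \eqref{RT_FourierEven_Oddtensor}--\eqref{RTCond_Oddtensor} and the decay \eqref{gnk_decay_Oddtensor}, form the boundary functions in \eqref{g_sequence_mODD} and choose any extensions $\psi_0,\dots,\psi_{-2q}$ of the low traces $g_0,\dots,g_{-2q}$ within $\Psi_g^{\mathrm{odd}}$; this is possible because $\mu>1/2$ places each $g_{-2j}$ in a trace space of $W^{1,1}(\OM)$, and of $C^{1,\mu}(\overline\OM)$ under the stronger hypothesis. Next reconstruct the even $A$-analytic tail $(u_{-(m+1)},u_{-(m+3)},\dots)$ from its prescribed traces $g_{-(m+1)},g_{-(m+3)},\dots$ via the Cauchy--Bukhgeim integral of Section~\ref{sec:L2map}, the moment relations \eqref{RTCond_Oddtensor} being exactly the compatibility making these traces the boundary values of a single $A$-analytic map. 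Defining the remaining even modes and then $\tilde f_n:=\partial u_{n-1}+\bar\partial u_{n+1}$ on the odd band $|n|\le m$, one unpacks $\{\tilde f_n\}$ into the real symmetric components $f_{i_1\cdots i_m}$; the operation $2\re\{\cdot\}$ in \eqref{eq:gnk_map_Oddtensor} together with the conjugacy $u_{-n}=\overline{u_n}$ renders $\bbf$ real valued. It then remains to check that the assembled $u$ is genuinely the angular expansion of a transport solution and that its $\Gam_+$-trace reproduces \eqref{eq:gnk_map_Oddtensor}; here \eqref{RT_FourierEven_Oddtensor} and \eqref{RTCond_Oddtensor} are consumed, guaranteeing consistency across the double parametrization and across the band where the tail meets the free modes.

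I expect the main obstacle to be this last verification \emph{at finite smoothness}. Unlike the $C^\infty$ setting one cannot differentiate the series termwise for free: showing that the reconstructed sequence is truly $A$-analytic and that the recovered $\bbf$ lies in $L^1(\mathbf{S}^m;\OM)$ (respectively in $C^\mu(\mathbf{S}^m;\OM)$) requires quantitative mapping bounds for the Bukhgeim--Hilbert transform, and it is exactly here that the weights $\jpn^{2}$ and $\jpk^{1+\mu}$ with $\mu>1/2$ in \eqref{gnk_decay_Oddtensor} are calibrated, controlling respectively the angular sum defining the tensor and the boundary regularity of the traces. A secondary, parity-sensitive difficulty on the necessity side is pinning down the precise signs and index shifts so that the $A$-analytic boundary relation collapses to the clean algebraic form \eqref{RTCond_Oddtensor} rather than the generalized moment conditions of \cite{pantyukhina}; this bookkeeping is exactly why the even and odd cases are stated and proved separately.
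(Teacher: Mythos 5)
Your plan reproduces the paper's own proof in every essential step: the transport boundary value problem \eqref{bvp_transport}, the parity argument (Proposition \ref{prop:Uoddeven_mEvenOdd}(b)) showing only even angular modes survive for odd $m$, the split of the even modes into the finite band carried by the non-uniqueness class $\Psi_g^{\emph{odd}}$ and the $\LL$-analytic tail, necessity of the moments by applying the trace characterization (Theorem \ref{NecSuf_BukhgeimHilbert_Thm}) together with the Fourier form of the Bukhgeim--Hilbert transform (Theorem \ref{newmapping_Hilbert}), and sufficiency by running the Bukhgeim--Cauchy integral \eqref{BukhgeimCauchyFormula} on the tail traces, filling the band with the $\psi_{-2j}$'s, defining $f_{2n+1}$ from consecutive modes, conjugating for positive modes, and verifying the skew-symmetry and the chord identity that identifies the trace with $X\bbf$ on $\Gam_+$. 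Your reading of where the weights $\jpn^{2}$ and $\jpk^{1+\mu}$, $\mu>1/2$, enter (Lemma \ref{prop:bg_gnk} feeding Theorems \ref{BukhgeimCauchyThm}(ii) and \ref{NecSuf_BukhgeimHilbert_Thm}(ii)) is also the paper's. A minor bookkeeping point in your favor: you start the tail at mode $-(m+1)=-(2q+2)$, which is consistent with the moments \eqref{RTCond_Oddtensor} being imposed for $n\leq -(2q+2)$; the paper's displayed construction \eqref{u_construction2} nominally starts at $-2q$, but that mode is subsequently overwritten by $\psi_{-2q}$, so your indexing is the cleaner one.

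There is, however, one step that fails as you state it, on the necessity side. You propose to ``read off'' \eqref{RTCond_Oddtensor} by applying the $A$-analytic trace relation directly to the tail of the transport solution generated by $\bbf\in L^1(\mathbf{S}^m;\OM)$. But Theorems \ref{NecSuf_BukhgeimHilbert_Thm} and \ref{newmapping_Hilbert} require $\bg\in l^{1,1}_{\INF}(\Gam)\cap C^\mu(\Gam;l_1)$, and for a merely integrable tensor satisfying \eqref{eq:f_refularity_cond1} the trace of the transport solution has no such regularity --- one only knows $X\bbf\in L^1(\Gam\times\sph)$ by Proposition \ref{prop:fLp_regularity}. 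The missing idea is a regularization: the paper first proves the moment conditions for components $f_{i_1\cdots i_m}\in C^2_0(\OM)$, for which $u\in C^{2}(\ol\OM\times\sph)$ and hence $\bg=\bu\lvert_\Gam\in C^1(\Gam;l_1)\subset l^{1,1}_{\INF}(\Gam)\cap C^\mu(\Gam;l_1)$, and then passes to $\bbf\in L^1(\mathbf{S}^m;\OM)$ by density of $C^2_0(\OM)$ in $L^1(\OM)$, the continuity of $X$ into $L^1(\Gam\times\sph)$ guaranteeing that the Fourier coefficients, and with them the linear relations \eqref{RTCond_Oddtensor}, survive the limit. You correctly flag finite smoothness as the delicate issue, but you attribute it only to the sufficiency part; it is equally, and more immediately, the obstruction in the necessity part, and without the approximation step that argument does not go through.
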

		

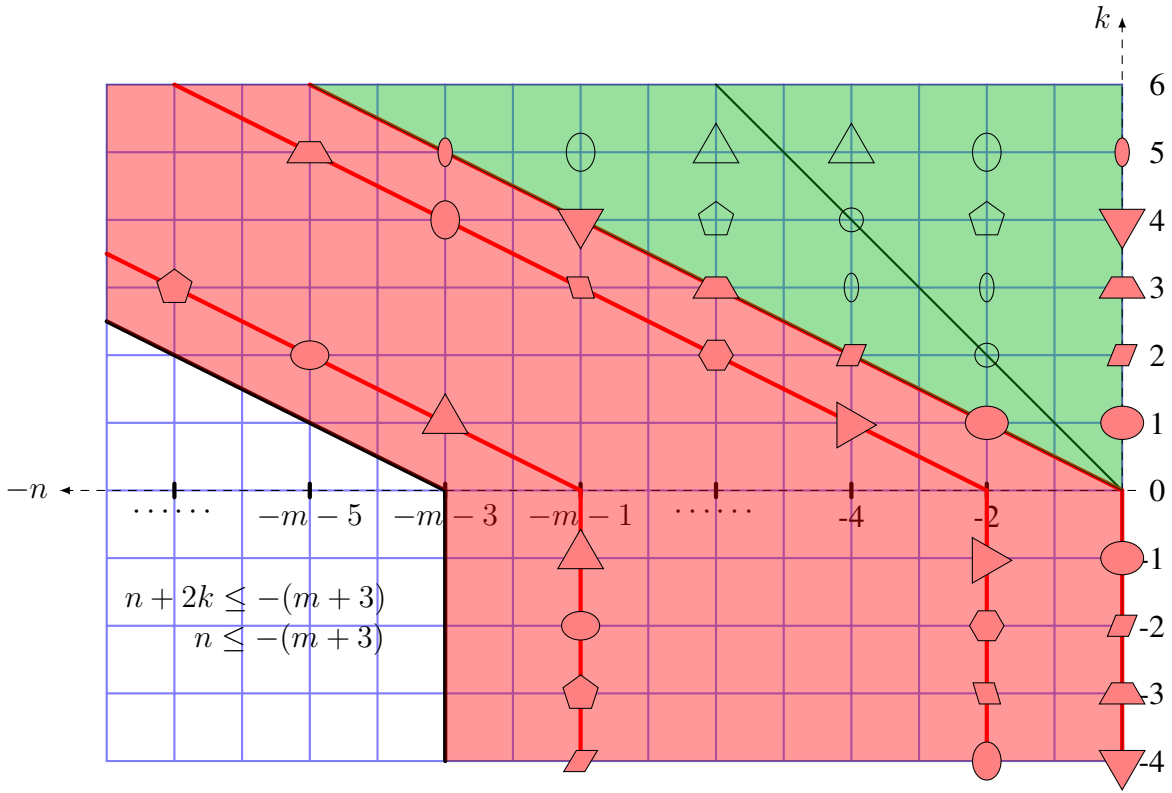
\begin{figure}[ht!]\label{lattice_Oddtensor}
	\centering
	\begin{tikzpicture}[scale=0.9,cap=round,>=latex]
		\tikzset{help lines/.style={color=blue!50}}
		\draw[thick,step=1cm,help lines] (0,0) grid (15,10);
		\draw[ultra thin,step=1cm,help lines] (0,0) grid (15,10);
		
		\draw[dashed, -latex] (15.2,4) -- (-0.7,4) node[anchor=east] {$-n$};
		\draw[dashed,-latex] (15,0) -- (15,11) node[anchor=east] {$k$};
		\foreach \x in {5,6}
		{
			\pgfmathtruncatemacro{\rx}{-14+2*\x};
			\node [anchor=north] at (2*\x+1,3.95)  {\rx};
			\draw [ultra thick] (2*\x+1,4.1) -- (2*\x+1,3.9);
		}

		\foreach \x in {0,1,...,4}
		{
			\pgfmathtruncatemacro{\rx}{-14+2*\x};
			\draw [ultra thick] (2*\x+1,4.1) -- (2*\x+1,3.9);
		} 
		
		\foreach \y in {0,1,...,10}
		{
			\pgfmathtruncatemacro{\rr}{ -4 + \y}; 
			\node [anchor=east] at (15.8,\y)  {\rr};
		} 
		
		\draw[thick,black] (15,4) to (9,10);
		
		\draw[ultra thick,red] (15,4) to (3,10);
		\draw[ultra thick,red] (15,0) to (15,4);
		
		\draw[ultra thick,red] (13,4) to (1,10);
		\draw[ultra thick,red] (13,0) to (13,4);
		
		\node [anchor=north] at (9,3.95)  {$\cdots \cdots$};
		
		\draw[ultra thick,red] (7,4) to (0,7.5);
		\draw[ultra thick,red] (7,0) to (7,4);
		\node [anchor=north] at (7,3.95)  {$-m-1$};
		
		\draw[ultra thick,black] (5,4) to (0,6.5);
		\draw[ultra thick,black] (5,0) to (5,4);
		\node [anchor=north] at (5,3.95)  {$-m-3$};
		\node [anchor=north] at (3,3.95)  {$-m-5$};
		\node [anchor=north] at (1,3.95)  {$\cdots \cdots $};

		\draw[fill=red,opacity=0.4]  (5,0) -- (15,0) -- (15,4) -- (3,10) -- (0,10) -- (0,6.5) -- (5,4)-- cycle;
		\draw[fill=blue!20!black!30!green,opacity=0.4]  (15,4) -- (15,10) -- (3,10)  -- cycle;
		
		\coordinate[label=below:$n+2k \leq -(m+3)$] (origin) at (2.2,2.8);
		\coordinate[label=below:$n \leq -(m+3)$] (origin) at (2.7,2.2);
		
		\draw[black] (13,6) circle (5pt);  
		\draw[black] (11,8) circle (5pt);  
		
		\node at (15,5)[draw,rectangle,minimum height=0.1cm]{};
		\node at (13,5)[draw, rectangle,minimum height=0.1cm]{}; 
		\node at (15,3)[draw,rectangle,minimum height=0.1cm]{};
		
		%
		
		\node[isosceles triangle, isosceles triangle apex angle=60,
		draw, rotate=90,  
		minimum size =0.01cm] (Trlast)at (11,9){};
		
		\node[isosceles triangle, isosceles triangle apex angle=60,
		draw, rotate=90,  
		minimum size =0.01cm] (Trlast1)at (9,9){};
		
		\node[isosceles triangle, isosceles triangle apex angle=60,
		draw, rotate=270, fill=red!50,
		minimum size =0.01cm] (TR2a)at (15,0){};
		
		\node[isosceles triangle, isosceles triangle apex angle=60,
		draw, rotate=270,  fill=red!50,
		minimum size =0.01cm] (TR2b) at (15,8){};
		
		\node[isosceles triangle, isosceles triangle apex angle=60,
		draw, rotate=270,  fill=red!50,
		minimum size =0.01cm] (TR2b) at (7,8){};
		
		\node[isosceles triangle, isosceles triangle apex angle=60,
		draw, rotate=240, fill=red!50,
		minimum size =0.01cm] (T3)at (13,3){};
		
		\node[isosceles triangle, isosceles triangle apex angle=60,
		draw, rotate=240,  fill=red!50,
		minimum size =0.01cm] (T4)at (11,5){};
		
		\node[isosceles triangle, isosceles triangle apex angle=60,
		draw, rotate=90,  fill=red!50,
		minimum size =0.01cm] (Trlast)at (7,3){};
		
		\node[isosceles triangle, isosceles triangle apex angle=60,
		draw, rotate=90,  fill=red!50,
		minimum size =0.01cm] (Trlast1)at (5,5){};
		
		
		\node at (15,1)[draw,fill=red!50,trapezium,minimum height=0.1cm]{}; 
		\node at (9,7)[draw,fill=red!50,trapezium,minimum height=0.1cm]{}; 		
		
		\node at (3,9)[draw,fill=red!50,trapezium,minimum height=0.1cm]{}; 	
		\node at (15,7)[draw,fill=red!50,trapezium,minimum height=0.1cm]{};

		\draw[black] (13,7) ellipse (3pt and 6pt);
		\draw[black] (11,7) ellipse (3pt and 6pt);
		
		
		\draw[black] (13,9) ellipse (6pt and 8pt);
		\draw[black] (7,9) ellipse (6pt and 8pt);
		
		\draw[black,fill=red!50] (15,3) ellipse (9pt and 7pt);
		\draw[black,fill=red!50] (13,5) ellipse (9pt and 7pt);
		\draw[black,fill=red!50] (15,5) ellipse (9pt and 7pt);
		\draw[black,fill=red!50]  (5,9) ellipse (3pt and 6pt);
		\draw[black,fill=red!50]  (15,9) ellipse (3pt and 6pt);
		
		\draw[black,fill=red!50] (13,0) ellipse (6pt and 8pt);
		\draw[black,fill=red!50,] (5,8) ellipse (6pt and 8pt);
		
		\draw[black,fill=red!50] (7,2) ellipse (8pt and 6pt);
		\draw[black,fill=red!50,] (3,6) ellipse (8pt and 6pt);
		
		
		
		\node [rectangle, draw, fill=red!50, xslant=0.4] at (15,2) {};
		\node [rectangle, draw, fill=red!50, xslant=0.4] at (15,6) {};
		\node [rectangle, draw, fill=red!50, xslant=0.4] at (11,6) {};
		
		\node [rectangle, draw, fill=red!50, xslant=-0.3] at (13,1) {};
		\node [rectangle, draw, fill=red!50, xslant=-0.3] at (7,7) {};

		\node [rectangle, draw, fill=red!50, xslant=0.6] at (7,0) {};

		\node[regular polygon, draw,
		regular polygon sides = 5] (p) at (13,8) {};
		
		\node[regular polygon, draw,
		regular polygon sides = 5] (p) at (9,8) {};
		
		\node[regular polygon, 	draw, fill=red!50,
		regular polygon sides = 5] (p) at (7,1) {};
		
		\node[regular polygon, draw, fill=red!50,
		regular polygon sides = 5] (p) at (1,7) {};

		\node[regular polygon, 	draw, fill=red!50,
		regular polygon sides = 6] (p) at (13,2) {};
		
		\node[regular polygon, draw, fill=red!50,
		regular polygon sides = 6] (p) at (9,6) {};

	\end{tikzpicture}
	\caption{An odd order $m$-tensor field $\bbf$ is determined by the even negative angular modes on or above the diagonal $k=-n$ (green region), and  the even negative angular modes (marked red) on the $\frac{m+1}{2}$ red lines $n+2k=-(m+1)$ for $k\geq0$. 
	All the even non-positive angular modes on and below the line $n+2k=-(m+3)$, and left of the line $n =-(m+3)$   vanish. 
		\label{lattice_Oddtensor}
	}
\end{figure}
	
	The following result is a direct consequence of the algebraic  interaction of the range conditions in \eqref{RTCond_even_Oddtensor}, \eqref{RTCond_reality_mOdd}, 
	\eqref{RT_FourierEven_Oddtensor}, and \eqref{RTCond_Oddtensor}. To illustrate the result of this interaction,  let us consider the partition of $\ds \BZ^-\times\BZ$ as in  Figure \ref{lattice_Oddtensor}: 
	
		For an odd integer $m \geq 1$,  the white region $W = W^+ \cup W^-$, where
		\begin{equation}\label{White_region_Oddtensor}
		\begin{aligned}
			W^+ & := \left \{ (n,k) \in \BZ^{-} \times \BZ^+  \; : \text{even } n\leq -m-3, \; \text{and} \; 0 \leq k \leq - \frac{n+m+3}{2}            \right \},   \\
			W^- & := \left \{ (n,k) \in \BZ^{-} \times \BZ^-  \; : \text{even } n\leq -m-3, \; \text{and} \;  k \leq 0            \right \},
		\end{aligned}
	\end{equation}
	the green region $G=G_L \cup G_R$, where
	\begin{equation}\label{Green_region_Oddtensor}
		\begin{aligned}
			G_L & := \left \{ (n,k) \in \BZ^{-} \times \BZ^+  \; : \text{even } n\leq -2, \; \text{and} \;     -\frac{n}{2}  \leq k \leq -n          \right \}.\\
			G_R & := \left \{ (n,k) \in \BZ^{-} \times \BZ^+  \; : \text{even } n\leq 0, \; \text{and} \;   k > -n            \right \}.
		\end{aligned}
	\end{equation}
	and the  red  region $R = R^+ \cup R^-$, where 
	\begin{equation}\label{Red_region_Oddtensor}
		\begin{aligned}
			R^+ & := \left \{ (n,k) \in \BZ^{-} \times \BZ^+  \; : \text{even } n\leq -2, \; \text{and} \;  - \frac{n+m+1}{2}  \leq k \leq - \frac{n}{2}            \right \},   \\
			R^- & := \left \{ (n,k) \in \BZ^{-} \times \BZ^-  \; : n \text{ even},   -m-1 \leq n\leq 0, \; \text{and} \;  k \leq 0            \right \}.
		\end{aligned}
	\end{equation}
	Note that the slanted line $\{(n,k): n+2k=0\}$ belongs to $G_L \cap R^+$. 
	
	\begin{remark}\label{InteractionODD}
	The modes in the region $R$ are affected solely by the symmetry \eqref{RT_FourierEven_Oddtensor} due to the double parametrization of the lines in $\Gam_+$. The modes in the region $G$  are affected both by the symmetry and the reality of the tensor  \eqref{RTCond_reality_mOdd}. The white region contains the modes affected by the nature of the operator (integration) along the line \eqref{RTCond_Oddtensor} in combination with the symmetry. See  Figure \ref{lattice_Oddtensor}.
\end{remark}

	The following result is a direct consequence of the algebraic interaction of the range
	\begin{cor}\label{corollary_Oddtensor}
		(i) Let $\bbf \in 	L^1(\mathbf{S}^m; \OM) $ be a real valued, integrable symmetric tensor field of odd order $m = 2q+1$, and  $g\in L^1_{\emph{skew,even}}(\Gam\times\sph)$ coincide with $X \bbf$ on $\Gam_+$, and $\{g_{n,k}\}$ be its Fourier coefficients.  Then, for all even $n\leq 0$,
		\begin{equation}\label{RTCond2neg}
			g_{n,k}=\left\{
			\begin{array}{ll}
				0,& \text{ if }
				(n,k) \in W,\\
				(-1)^{k}\overline{g_{-n-2k,k}},& \text{ if }
				(n,k) \in G,\\
				(-1)^{k}g_{n+2k,-k},& \text{ if }
				(n,k) \in R,\\
			\end{array}
			\right.
		\end{equation}see Figure \ref{lattice_Oddtensor}.

		(ii) Let $\{g_{n,k}\}$ be given for $(n,k) \in R^+\cup G_L$, such that 
		\begin{align}\label{gnk_decay_Oddtensor}
			\sum_{\substack{	(n,k) \in R^+ \cup G_L}} \jpn^{2} \lvert g_{n,k} \rvert < \INF, \quad \text{and} \quad
			\sum_{\substack{	(n,k) \in R^+ \cup G_L}}  \jpk^{1+\mu} 	\lvert g_{n,k} \rvert < \INF, 
		\end{align}for some $\mu>1/2$. Extend $\{g_{n,k}\}$ from $(n,k) \in R^+\cup G_L$ to $R \cup G$ via \eqref{RTCond2neg}. Then there exists a real valued $\bbf \in L^1(\mathbf{S}^m; \OM)$, such that the mapping 
		\begin{align}\label{eq:gnk_map}
			(\Gamma\times\sph) \ni (e^{i \beta}, e^{i\theta}) \longmapsto  2\re\left\{  \sum_{\substack{	(n,k) \in R \cup G}} g_{n,k} e^{i n \theta}  e^{i k \beta}\right\}
		\end{align} 
		is precisely $X \bbf$ on $\Gam_+$, and $-X \bbf$ on $\Gam_-$. Moreover, $\bbf$  is uniquely determined by an element
		$ \left(\psi_0,  \psi_{-2},  \cdots ,\psi_{-2q}\right)  \in \Psi_{g}^{\emph{odd}} $.
	\end{cor}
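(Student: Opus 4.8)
The plan is to obtain Corollary~\ref{corollary_Oddtensor} as a purely algebraic corollary of Theorem~\ref{RangeCharac_Oddtensor}, by reorganizing the four range conditions \eqref{RTCond_even_Oddtensor}--\eqref{RTCond_Oddtensor} along the lattice partition $\BZ^-\times\BZ=W\cup G\cup R$ of Figure~\ref{lattice_Oddtensor}. The structural fact underlying everything is that the Symmetry relation \eqref{RT_FourierEven_Oddtensor} is carried by the involution $(n,k)\mapsto(n+2k,-k)$ and the Conjugacy relation \eqref{RTCond_reality_mOdd} by the involution $(n,k)\mapsto(-n,-k)$; their composition sends $(n,k)\mapsto(-n-2k,k)$. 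The three coloured regions are exactly the pieces of the negative even-mode lattice on which one, the other, or the combined relation is needed to reduce an arbitrary mode to a representative in the fundamental domain $R^+\cup G_L$.

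For part (i) I would first invoke Evenness \eqref{RTCond_even_Oddtensor} to discard all odd angular modes, so that only even $n\le 0$ survive, and then verify \eqref{RTCond2neg} region by region. On the red region $R$ the asserted identity is read directly off the Symmetry relation \eqref{RT_FourierEven_Oddtensor} specialized to even $n$. On the green region $G$ I would compose Symmetry with Conjugacy, mapping $(n,k)$ through $(n+2k,-k)$ to $(-n-2k,k)$, which yields the conjugate of $g_{-n-2k,k}$ up to the sign recorded in \eqref{RTCond2neg}. On the white region $W$ the mechanism is different and is the crux of the vanishing: there both the Moments condition \eqref{RTCond_Oddtensor} and the Symmetry condition \eqref{RT_FourierEven_Oddtensor} apply to the same mode and furnish the two expressions $g_{n,k}=(-1)^kg_{n+2k,-k}$ and $g_{n,k}=-(-1)^kg_{n+2k,-k}$, whose compatibility forces $g_{n+2k,-k}=0$ and hence $g_{n,k}=0$. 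For the part of $W$ lying in the upper half-plane $k>0$, where the Moments condition is not directly stated, I would first transport the mode by Symmetry into $\{k\le 0\}$; the inequality defining $W^+$ gives the image angular index $n+2k\le-(m+3)$, so it lies in the regime $n\le-(m+1)$ where \eqref{RTCond_Oddtensor} applies, and I conclude vanishing there and pull back. This reproduces Remark~\ref{InteractionODD}: $R$ is governed by symmetry alone, $G$ by symmetry together with reality, and $W$ by the moment condition in tandem with symmetry.

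For part (ii) I would run the theorem in reverse, treating $R^+\cup G_L$ as a fundamental domain. First I extend the prescribed coefficients from $R^+\cup G_L$ to all of $R\cup G$ through the relations \eqref{RTCond2neg}, and set the remaining even negative modes to zero on $W$, thereby producing a full family $\{g_{n,k}\}_{n\le-2,\ \text{even}}$. I would check that this extension is unambiguous on the overlaps of the regions; in particular on the slanted line $\{n+2k=0\}\subset G_L\cap R^+$ noted after \eqref{Red_region_Oddtensor}, the green and red prescriptions agree precisely because Conjugacy \eqref{RTCond_reality_mOdd} holds along $n=0$. Next I would show that the two summability hypotheses \eqref{gnk_decay_Oddtensor} imposed on $R^+\cup G_L$ transfer to the weighted $\ell^1$ bounds required in Theorem~\ref{RangeCharac_Oddtensor}(ii), using that along each orbit of the two involutions the weights $\jpn$ and $\jpk$ change only by bounded factors. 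Having verified that the extended family satisfies \eqref{RT_FourierEven_Oddtensor}, \eqref{RTCond_Oddtensor} and the decay, Theorem~\ref{RangeCharac_Oddtensor}(ii) delivers the real valued $\bbf\in L^1(\mathbf{S}^m;\OM)$, unique modulo the class $\Psi_{g}^{\text{odd}}$ of \eqref{NART_mOddPsiClass}; finally I would identify the reconstruction sum over $R\cup G$ with the sum \eqref{eq:gnk_map_Oddtensor} of the theorem, the white modes contributing nothing and the $n=0$ row being recovered from the negative modes through the relations \eqref{RTCond2neg}.

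I expect the main obstacle to be the combinatorial bookkeeping rather than any analysis: proving that $R^+\cup G_L$ is an exact fundamental domain, so that every even negative mode is hit once and only once by the trichotomy; matching the signs consistently across the region boundaries, in particular along $\{n+2k=0\}$ and on the $n=0$ row where the two involutions have fixed behaviour; and confirming that the decay prescribed only on $R^+\cup G_L$ indeed controls the full family. All of the genuine analytic content is imported from Theorem~\ref{RangeCharac_Oddtensor}; the present statement adds only the identification of a minimal data set together with the algebraic relations that propagate it.
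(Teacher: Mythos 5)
Your overall strategy---treating the corollary as pure lattice algebra derived from Theorem \ref{RangeCharac_Oddtensor}, with $R^+\cup G_L$ as a fundamental domain for the involutions $(n,k)\mapsto(n+2k,-k)$ and $(n,k)\mapsto(-n,-k)$---is exactly what the paper intends; the paper offers no written proof beyond declaring the corollary a direct consequence of the range conditions, and your region-by-region scheme (in particular transporting $W^+$ into $\{k\le 0\}$ by Symmetry before invoking Moments) is the right way to fill that in. The decay transfer also works, though not for the reason you give: the ratio of the weights along an orbit is \emph{not} bounded; what saves the argument is monotonicity in the favorable direction, namely $|n|$ never increases when an extended mode is traced back to its representative in $R^+\cup G_L$ while $|k|$ is preserved, so sums over the extension regions are dominated by the prescribed sums. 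The genuine problem is that the two verifications you explicitly defer do not close against the statement as printed, and your own text already contains the contradiction. In your white-region argument you (correctly) specialize Symmetry \eqref{RT_FourierEven_Oddtensor} at even $n$ to $g_{n,k}=-(-1)^{k}g_{n+2k,-k}$; but on $R$ you then claim the printed relation $g_{n,k}=(-1)^{k}g_{n+2k,-k}$ of \eqref{RTCond2neg} is ``read directly off'' the same Symmetry relation. It is not: the direct specialization gives $g_{n,k}=(-1)^{1+k}g_{n+2k,-k}$ on $R$, and $g_{n,k}=(-1)^{1+k}\overline{g_{-n-2k,k}}$ on $G$ after composing with Conjugacy \eqref{RTCond_reality_mOdd}. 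Indeed, the printed sign together with Symmetry would force every mode on $R$ to vanish, collapsing the statement. Comparison with the even-order Corollary \ref{corollary}, whose formula \eqref{RTCond2negT} carries $(-1)^{1+k}$ consistently with its symmetry relation, shows that \eqref{RTCond2neg} contains a sign typo; your argument proves the corrected statement, and a complete write-up must say so rather than assert that the signs match.

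Second, in part (ii) the step ``verify that the extended family satisfies \eqref{RT_FourierEven_Oddtensor}, \eqref{RTCond_Oddtensor} and the decay'' fails on one diagonal no matter which sign you adopt. Extend with the symmetry-consistent sign, so that \eqref{RT_FourierEven_Oddtensor} holds throughout $R\cup G$. The theorem's Moments hypothesis is required for all even $n\le -(m+1)$ and $k\le 0$, and the row $n=-(m+1)$ is populated by your extension from the left edge $\{n+2k=-(m+1)\}$ of $R^+$ as defined in \eqref{Red_region_Oddtensor}: there the extension carries the sign $(-1)^{1+k}$ while Moments demands $(-1)^{k}$, which is compatible only if the prescribed data vanishes on that edge. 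Hence arbitrary decaying data on $R^+\cup G_L$ cannot pass the hypothesis check, and the reduction to Theorem \ref{RangeCharac_Oddtensor}(ii) breaks down. This traces to an off-by-one in the paper's odd-case regions rather than to your method: in the even case $W^+=\{n+2k\le -(m+1)\}$ and $R^+$ starts at $n+2k=-(m-1)$, whereas here $W^+$ stops at $-(m+3)$ and $R^+$ reaches $-(m+1)$; note also that the caption of Figure \ref{lattice_Oddtensor} counts $\frac{m+1}{2}$ red lines, which matches $R^+$ ending at $n+2k=-(m-1)$, not \eqref{Red_region_Oddtensor}. Your plan goes through verbatim once $R^+$ is shrunk to $-(m-1)\le n+2k\le 0$ and the diagonal $n+2k=-(m+1)$ is assigned to the vanishing region; without that correction, the verification you postponed is an actual obstruction, not bookkeeping.
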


	\section{$L^2$-analytic maps and their trace characterization} \label{sec:L2map}
	The method of proof of Theorems \ref{RangeCharac_Eventensor} and \ref{RangeCharac_Oddtensor} is based on the characterization in \cite{sadiqTamasan01}
	of traces of $A$-analytic maps in the sense of Bukhgeim \cite{bukhgeimBook}. In this section we summarize those existing results used in the proof.
	

	
	
	Bukhgeim's original  theory in \cite{bukhgeimBook}  considers the sequence valued maps 
		\begin{align}\label{boldu}
		\OM \ni z\mapsto  \bu(z)&: = \langle u_{0}(z), u_{-1}(z),u_{-2}(z),... \rangle,
	\end{align} and solution of the Beltrami-like equation 
    \begin{align}\label{beltrami}
    	\dba\bu(z) + \LL \del\bu(z) = 0,\quad z\in \OM,
    \end{align}
    where  $\LL\bu(z)= \LL (u_0(z),u_{-1}(z),u_{-2}(z),...):=(u_{-1}(z),u_{-2}(z),...)$ denotes the left translation.
	These solutions are called $\LL$-analytic.

	Similar to classical analytic maps, the solution of \eqref{beltrami} satisfy a Cauchy-like integral formula,
	\begin{align}\label{Analytic}
		\bu (z) = \B [\bu \lvert_{\Gam}](z), \quad  z\in\OM,
	\end{align} where $\B$ is the Bukhgeim-Cauchy operator  acting on $\bu \lvert_{\Gam}$ defined component-wise \cite{finch} for $n\geq 0$ by
	\begin{align} \label{BukhgeimCauchyFormula}
		(\B \bu)_{-n}(z) &:= \frac{1}{2\pi i} \int_{\Gam}
		\frac{ u_{-n}(\zeta)}{\zeta-z}d\zeta  + \frac{1}{2\pi i}\int_{\Gam} \left \{ \frac{d\zeta}{\zeta-z}-\frac{d \ol{\zeta}}{\ol{\zeta}-\ol{z}} \right \} \sum_{j=1}^{\infty}  
		u_{-n-j}(\zeta)
		\left( \frac{\ol{\zeta}-\ol{z}}{\zeta-z} \right) ^{j},\; z\in\OM.
	\end{align}
	

	The traces of $\LL$-analytic maps  on the boundary must satisfy some constraints, which can be expressed in terms of a corresponding Hilbert-like transform introduced in  \cite{sadiqTamasan01}. More precisely, the Bukhgeim-Hilbert transform $\HT$ acting on  $\bg$, 
	\begin{align}\label{boldHg}
		\Gam \ni z\mapsto  (\HT \bg)(z)& = \langle (\HT \bg)_{0}(z), (\HT \bg)_{-1}(z),(\HT \bg)_{-2}(z),... \rangle
	\end{align}
	is defined component-wise for $n\geq 0$ by
	\begin{align}\label{BHtransform}
		(\HT \bg)_{-n}(z)=\frac{1}{\pi }\int_\Gam \frac{ g_{-n}(\zeta)}{\zeta-z}d\zeta  + \frac{1}{\pi }\int_{\Gam} \left \{ \frac{d\zeta}{\zeta-z}-\frac{d \ol{\zeta}}{\ol{\zeta}-\ol{z}} \right \} \sum_{j=1}^{\infty}  
		g_{-n-j}(\zeta)
		\left( \frac{\ol{\zeta}-\ol{z}}{\zeta-z} \right) ^{j}, z\in\Gam.
	\end{align}
	
	The theorems below comprise some results in  \cite{sadiqTamasan01,sadiqTamasan02,sadiqTamasan22}. For $0<\mu<1$, $p=1,2$, we consider the  Banach spaces:
	\begin{equation}\label{spaces}
	\begin{aligned} 
			l^{1,p}_{\INF}(\Gam) &:= \left \{ \bg= \langle g_{0}, g_{-1}, g_{-2},...\rangle\; : \lnorm{\bg}_{l^{1,p}_{\INF}(\Gam)}:= \sup_{\xi \in \Gam}\sum_{j=0}^{\INF}  \jpj^p \lvert g_{-j}(\xi) \rvert < \INF \right \},\\
			C^{\mu}(\Gam; l_1) &:= \left \{ \bg= \langle g_{0}, g_{-1}, g_{-2},...\rangle:
			\sup_{\xi\in \Gam} \lVert \bg(\xi)\rVert_{\ds l_{1}} + \underset{{\substack{
						\xi,\eta \in \Gam \\
						\xi\neq \eta } }}{\sup}
			\frac{\lVert \bg(\xi) - \bg(\eta)\rVert_{\ds l_{1}}}{|\xi - \eta|^{ \mu}} < \INF \right \}, \\
			Y_{\mu}(\Gam) &:= \left \{ \bg: \bg \in  l^{1,2}_{\INF}(\Gam) \; \text{and} \;
			\underset{{\substack{
						\xi,\eta \in \Gam \\
						\xi\neq \eta } }}{\sup} \sum_{j=0}^{\INF}  \jpj 
			\frac{\lvert g_{-j}(\xi) - g_{-j}(\eta)\rvert }{|\xi - \eta|^{ \mu}} < \INF \right \},
		\end{aligned}
	\end{equation} where, for brevity, we use the notation $\jpj=(1+|j|^2)^{1/2}$.
	Similarly,  we consider $ C^{\mu}(\ol \OM; l_\INF) $, and $  C^{\mu}(\OM; l_\INF) = \bigcup_{0<r<1} C^{\mu}(\ol \OM_r; l_\INF)$, where for $0<r<1$, $\OM_r = \{ z \in \BC : |z| <r  \}$.

	\begin{theorem}\label{BukhgeimCauchyThm}
		Let $0<\mu<1$. Let $\bg = \langle g_{0}, g_{-1}, g_{-2},...\rangle$ be a sequence valued map defined on the boundary $\Gam$ and $\B$ be the Bukhgeim-Cauchy operator acting on $\bg$ as in \eqref{BukhgeimCauchyFormula}. 
		
		(i) If $\bg \in l^{1,1}_{\INF}(\Gam)\cap C^\mu(\Gam;l_1)$, then $\bu := \B \bg\in C^{1,\mu}(\OM;l_\infty)\cap C(\ol \OM;l_\infty)$ is $\LL$-analytic in $\OM$.
		
		(ii) Moreover, if $\bg\in Y_{\mu}(\Gam)$ for $\mu>1/2$, then $ \B \bg \in C^{1,\mu}(\OM;l_1)\cap C^{\mu}(\ol \OM;l_1)$. 
	\end{theorem}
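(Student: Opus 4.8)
The plan is to treat $\B\bg$ as a perturbation of the component-wise Cauchy transform and to exploit the index shift built into the correction kernel. Writing $\kappa(z,\zeta):=\frac{\ol\zeta-\ol z}{\zeta-z}$, each component reads
\[(\B\bg)_{-n}(z)=\frac{1}{2\pi i}\int_\Gam\frac{g_{-n}(\zeta)}{\zeta-z}\,d\zeta+\frac{1}{2\pi i}\int_\Gam\Big\{\frac{d\zeta}{\zeta-z}-\frac{d\ol\zeta}{\ol\zeta-\ol z}\Big\}\sum_{j\ge 1}g_{-n-j}(\zeta)\,\kappa^j.\]
For $z$ in a compact subset of $\OM$ the kernels are smooth in $z$; since $|\kappa|\equiv 1$, convergence of the $j$-sum comes entirely from the decay of the densities, while the single factor of $j$ produced by each $z$-derivative of $\kappa^j$ is absorbed by the weight $\jpj$ present in $l^{1,1}_\INF(\Gam)$, so I may differentiate under both the integral and the sum over $j$. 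The first task is the $\LL$-analyticity, i.e. $\dba(\B\bg)_{-n}+\del(\B\bg)_{-n-1}=0$ for every $n\ge 0$.

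This is an algebraic cancellation driven by the elementary identities $\dba_z\kappa=-\tfrac{1}{\zeta-z}$, $\del_z\kappa=\tfrac{\kappa}{\zeta-z}$, together with $\dba_z\tfrac{1}{\zeta-z}=0$ and $\del_z\tfrac{1}{\zeta-z}=\tfrac{1}{(\zeta-z)^2}$. Applying $\dba_z$ annihilates the Cauchy term of $(\B\bg)_{-n}$ and turns the $j$-th summand of its correction into $-\tfrac{j(\ol\zeta-\ol z)^{j-1}}{(\zeta-z)^{j+1}}d\zeta+(j-1)\tfrac{(\ol\zeta-\ol z)^{j-2}}{(\zeta-z)^{j}}d\ol\zeta$; in particular its $j=1$ term is $-\tfrac{g_{-n-1}}{(\zeta-z)^2}d\zeta$. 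Applying $\del_z$ to $(\B\bg)_{-n-1}$ yields $+\tfrac{g_{-n-1}}{(\zeta-z)^2}d\zeta$ from its Cauchy term, which cancels that $j=1$ term, and — after relabelling the summation index so the densities read $g_{-n-j}$ — correction summands equal to the negatives of the $\dba_z$ summands for every $j\ge 2$. Hence the two contributions cancel term by term, which is precisely why the kernel pairs $\kappa^j$ with the shifted density $g_{-n-j}$.

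It then remains to place $\B\bg$ in the asserted spaces. Interior regularity $C^{1,\mu}(\OM;l_\INF)$ follows from the smoothness of the kernels on compacta, dominated convergence, and the weight in $l^{1,1}_\INF(\Gam)$, which forces the $j$-sum and its first derivatives to converge uniformly on compact subsets of $\OM$. Continuity up to $\Gam$ in the $l_\INF$ norm comes from the classical Plemelj--Privalov Hölder estimate for Cauchy integrals with $C^\mu$ densities, applied to each component and made uniform in $n$ by the $C^\mu(\Gam;l_1)$ hypothesis, the $l_1$ control of the densities dominating the correction sum uniformly. For part (ii), the stronger hypothesis $\bg\in Y_\mu(\Gam)$ with $\mu>1/2$ supplies the extra weight $\jpj$ in $l^{1,2}_\INF$ together with a $\jpj$-weighted Hölder modulus; summing the shifted densities over the component index and invoking $\mu>1/2$ upgrades the target from $l_\INF$ to $l_1$, giving $\B\bg\in C^{1,\mu}(\OM;l_1)\cap C^\mu(\ol\OM;l_1)$.

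The main obstacle is not the analyticity identity, which is a bookkeeping cancellation, but the boundary estimates for the correction term: because $|\kappa|\equiv 1$, the kernel contributes no decay in $j$, and the Cauchy singularity at $\zeta=z$ sharpens into a boundary singularity as $z\to\Gam$ without improving with $j$. Controlling $\sum_j\jpj^{p}\,\lvert(\text{$j$-th term})\rvert$ uniformly up to $\Gam$ therefore rests entirely on the summability weights absorbing the lack of kernel decay, together with a careful splitting of the contour near the diagonal to extract the Hölder modulus uniformly in both $j$ and the component index $n$; the threshold $\mu>1/2$ enters exactly here to secure the $l_1$-summability in part (ii). Justifying the interchange of $\del,\dba$, and of the boundary limit, with the infinite sum is handled by the same weighted majorants.
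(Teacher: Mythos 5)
First, a framing remark: this paper does not prove Theorem \ref{BukhgeimCauchyThm} at all --- part (i) is imported from \cite[Theorem 3.1]{sadiqTamasan01} and part (ii) from \cite[Proposition 2.3]{sadiqTamasan02} --- so your attempt has to be measured against those proofs. The parts of your proposal that are fully worked out are correct: the identities $\dba_z\kappa=-\tfrac{1}{\zeta-z}$ and $\del_z\kappa=\tfrac{\kappa}{\zeta-z}$, the cancellation of the $j=1$ term against the Cauchy term, and the index relabelling do yield $\dba(\B\bg)_{-n}+\del(\B\bg)_{-n-1}=0$ term by term, and the interior $C^{1,\mu}(\OM;l_\infty)$ bounds do follow from the $\jpj$-weights since $|\zeta-z|$ is bounded below on compacta. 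But these are the routine parts of the theorem.

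The genuine gap is the behaviour up to $\Gam$, which is the actual content of both cited results and which you assert rather than prove. Concretely: (1) Plemelj--Privalov cannot be ``applied to each component'' of the correction series, because the $j$-th term is not a Cauchy integral with a $z$-independent H\"older density: its density $g_{-n-j}(\zeta)\,\kappa^j(z,\zeta)$ moves with $z$, and its H\"older seminorm in $\zeta$ grows like $j$ and blows up as $z$ approaches $\Gam$, so the classical estimate gives nothing uniform. (2) You treat the bracket $\tfrac{d\zeta}{\zeta-z}-\tfrac{d\ol\zeta}{\ol\zeta-\ol z}$ as a genuine Cauchy singularity that ``sharpens'' as $z\to\Gam$, to be tamed by contour splitting; this points the argument in the wrong direction. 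The bracket is purely imaginary,
\begin{equation*}
\frac{d\zeta}{\zeta-z}-\frac{d\ol\zeta}{\ol\zeta-\ol z}=2i\,\im\left(\frac{d\zeta}{\zeta-z}\right)=2i\,d_\zeta\arg(\zeta-z),
\end{equation*}
and on the unit circle, with $z=re^{i\alpha}$, $\zeta=e^{i\beta}$, it equals $i\left(1+P_r(\alpha-\beta)\right)d\beta$, where $P_r$ is the Poisson kernel: a positive measure of fixed total mass, i.e.\ an approximate identity, not a singular integral. This cancellation is what yields the uniform bound of the correction term by $2\sup_{\Gam}\sum_{j\geq 1}|g_{-n-j}|$ and its continuity up to $\ol\OM$; without it, ``$l_1$ control of the densities dominating the correction sum'' is unsupported, since summable densities against a truly singular kernel do not give boundary continuity. (3) In part (ii) the threshold $\mu>1/2$ appears in no estimate of your proposal --- you only declare that it ``enters exactly here.'' A threshold of this kind can only come from a quantitative step, e.g.\ the convergence of a series of the form $\sum_j\jpj^{-2\mu}$ when the $l_1$-valued H\"older modulus is extracted from the correction sum, and that is precisely the estimate you never write down. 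As it stands, the membership $\B\bg\in C(\ol\OM;l_\infty)$ in (i) and the whole of part (ii) remain unproven.
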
For the proof of Theorem \ref {BukhgeimCauchyThm} (i) we refer to \cite[Theorem 3.1]{sadiqTamasan01}, and for part (ii) we refer to \cite[Proposition 2.3]{sadiqTamasan02}.
	
	
	The following result characterize the traces of $\LL$-analytic maps.
	\begin{theorem}\label{NecSuf_BukhgeimHilbert_Thm}
		
		Let $0<\mu<1$, and let $\HT$ be the Bukhgeim-Hilbert transform in \eqref{BHtransform}.
		%
		%
		
		(i) If $\bg \in l^{1,1}_{\INF}(\Gam)\cap C^\mu(\Gam;l_1)$ is the boundary value of an $\LL$-analytic function,
		then $\HT \bg\in C^{\mu}(\Gam;l_\infty)$ and satisfies 
		\begin{align} \label{NecSufEq}
			(I+ i \HT) \bg = {\bf {0}}.
		\end{align}
		(ii)  If $\bg\in Y_{\mu}(\Gam)$ for $\mu>1/2$,  satisfies \eqref{NecSufEq}, then $\LL$-analytic function
		 $\bu := \B \bg \in C^{1,\mu}(\OM;l_1)\cap C^{\mu}(\ol \OM;l_1)$, satisfy
		\begin{align}\label{gdata_defn}
			\bu \lvert_{\Gam} = \bg.
		\end{align}
		%
	\end{theorem}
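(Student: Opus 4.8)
The plan is to deduce both statements from a single Plemelj--Sokhotski jump relation for the Bukhgeim--Cauchy operator: for $\bg$ in the relevant trace class, the nontangential interior limit of $\B\bg$ on $\Gam$ is
\begin{align}\label{jumprel}
\left(\B\bg\right)\big\lvert_{\Gam}=\tfrac12\left(I-i\HT\right)\bg.
\end{align}
Granting \eqref{jumprel}, part (i) follows at once: if $\bg=\bu\big\lvert_\Gam$ for an $\LL$-analytic $\bu$, the Cauchy-type formula \eqref{Analytic} gives $\bu=\B\bg$ in $\OM$, and passing to the nontangential boundary limit yields $\bg=\tfrac12(I-i\HT)\bg$, which rearranges to $(I+i\HT)\bg=\bzero$, i.e. \eqref{NecSufEq}. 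For part (ii), Theorem \ref{BukhgeimCauchyThm}(ii) already provides that $\bu:=\B\bg$ is $\LL$-analytic and lies in $C^{1,\mu}(\OM;l_1)\cap C^{\mu}(\ol\OM;l_1)$; inserting the hypothesis $i\HT\bg=-\bg$ into \eqref{jumprel} gives $\bu\big\lvert_\Gam=\tfrac12(\bg+\bg)=\bg$, which is \eqref{gdata_defn}.

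The technical core is thus \eqref{jumprel}, which I would establish componentwise in $n$. The first integral in \eqref{BukhgeimCauchyFormula} is a classical Cauchy integral, so the Plemelj--Sokhotski formula gives its interior trace as $\tfrac12 g_{-n}(z_0)$ plus the principal-value Cauchy integral of $g_{-n}$; since the constant $\tfrac{1}{2\pi i}$ of $\B$ equals $-\tfrac{i}{2}$ times the constant $\tfrac1\pi$ of $\HT$ in \eqref{BHtransform}, this principal value is exactly the first summand of $-\tfrac{i}{2}(\HT\bg)_{-n}(z_0)$. For the second integral, which carries the conjugate kernel $\tfrac{d\zeta}{\zeta-z}-\tfrac{d\ol\zeta}{\ol\zeta-\ol z}$ and the series $\sum_{j\ge1}g_{-n-j}(\zeta)\bigl(\tfrac{\ol\zeta-\ol z}{\zeta-z}\bigr)^{j}$, I would use that on $\Gam$ one has $\ol\zeta=1/\zeta$, whence $\tfrac{\ol\zeta-\ol{z_0}}{\zeta-z_0}=-\tfrac{1}{\zeta z_0}$ is bounded and each summand reduces to an ordinary Cauchy-type singular integral. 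The point to verify is that the delta jumps produced by the holomorphic piece $\tfrac{d\zeta}{\zeta-z}$ and the anti-holomorphic piece $-\tfrac{d\ol\zeta}{\ol\zeta-\ol z}$ cancel, so that the second integral contributes no jump and reduces to its own principal value, which is precisely the second summand of $-\tfrac{i}{2}\HT\bg$. The weighted summability built into $l^{1,1}_{\INF}(\Gam)$ (respectively $Y_{\mu}(\Gam)$) justifies exchanging the boundary limit with the sum over $j$ and applying this scalar analysis term by term.

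The regularity assertions are then secondary. In part (i), $\HT\bg\in C^{\mu}(\Gam;l_\infty)$ would follow from the Plemelj--Privalov theorem for Cauchy-type singular integrals applied in each component, the $l^{1,1}_{\INF}(\Gam)$ bound controlling the $l_\infty$-norm uniformly in the base point. In part (ii) the membership of $\bu$ in $C^{1,\mu}(\OM;l_1)\cap C^{\mu}(\ol\OM;l_1)$ and its $\LL$-analyticity are quoted verbatim from Theorem \ref{BukhgeimCauchyThm}(ii), so no further work is needed there.

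The step I expect to be the main obstacle is the cancellation of the jumps in the conjugate-kernel integral: one must track the boundary behaviour of both $\tfrac{d\zeta}{\zeta-z}$ and $\tfrac{d\ol\zeta}{\ol\zeta-\ol z}$ against the $j$-th power factor as $z\to z_0$, with estimates uniform in $j$, and confirm that no net delta contribution survives. It is exactly here that the unit-circle identity $\ol\zeta=1/\zeta$ and the summability weights defining $l^{1,1}_{\INF}(\Gam)$ and $Y_{\mu}(\Gam)$ are indispensable, and that the validity of the stated relation $(I+i\HT)\bg=\bzero$ forces the cancellation to be exact.
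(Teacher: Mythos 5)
The paper itself contains no proof of Theorem \ref{NecSuf_BukhgeimHilbert_Thm}: it is quoted verbatim from \cite{sadiqTamasan01}, and the proofs there rest on precisely the Sokhotski--Plemelj analysis of the Bukhgeim--Cauchy integral that you propose, so your route is a reconstruction of the source's argument rather than a new one. Your bookkeeping is also correct: granting the jump relation $(\B\bg)\lvert_{\Gam}=\tfrac12(I-i\HT)\bg$ for nontangential interior limits, part (i) follows from \eqref{Analytic} and part (ii) from Theorem \ref{BukhgeimCauchyThm}(ii); the constants match because $\tfrac{1}{2\pi i}=-\tfrac{i}{2}\cdot\tfrac{1}{\pi}$; and the relation itself is true, with your guessed mechanism (exact cancellation of the two jumps coming from the holomorphic and anti-holomorphic pieces) being the correct one.

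The genuine gap is that this jump relation is the entire analytic content of the theorem, and the way you propose to prove it would not go through as described. For $z\in\OM$ the $j$-th summand of the second integral in \eqref{BukhgeimCauchyFormula} is \emph{not} ``an ordinary Cauchy-type singular integral'': it is a pole of order $j+1$ multiplied by the weight $\left(\frac{\ol\zeta-\ol z}{\zeta-z}\right)^j$, which is unimodular but $z$-dependent and has \emph{no continuous extension} to $\zeta=z_0$, $z=z_0$ (letting $\zeta\to z_0$ first gives $(-1/(\zeta z_0))^j$, letting $z\to z_0$ first gives an oscillating limit), so Plemelj--Privalov cannot be invoked summand by summand; the identity $\ol\zeta=1/\zeta$ collapses each summand to a Cauchy-type integral only \emph{after} $z$ is placed on $\Gam$. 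What must actually be proved is that the interior limit of each piece equals its boundary-evaluated principal value plus an explicit jump, that the two jumps cancel, and that all estimates are uniform in $j$ so the series can be summed against the $l^{1,1}_{\INF}(\Gam)$, respectively $Y_{\mu}(\Gam)$, norms. (A further inaccuracy: on the circle the combined boundary kernel is $\left\{\frac{d\zeta}{\zeta-z_0}-\frac{d\ol\zeta}{\ol\zeta-\ol{z_0}}\right\}\left(\frac{\ol\zeta-\ol{z_0}}{\zeta-z_0}\right)^j=(-1)^j\frac{d\zeta}{\zeta^{j+1}z_0^{j}}$, a perfectly regular kernel, so the second summand of \eqref{BHtransform} involves no principal value at all.) On the unit disc the cleanest way to close the gap is explicit residue computation mode by mode: e.g.\ for $j=1$ and $g_{-n-1}(\zeta)=\zeta^k$ the combined summand of $\B$ equals $kz^{k-2}(1-\lvert z\rvert^2)$ for $k\geq2$, $-\ol z$ for $k=1$, and $0$ for $k\leq0$, whose limits agree with the regular boundary integral $-\ol{z_0}\,\delta_{k,1}$; summing over $j,k$ under \eqref{gnk_decay}-type decay is exactly the computation underlying Theorem \ref{newmapping_Hilbert}. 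Note finally that this disc-specific argument proves less than the cited result, which (as the paper remarks after Theorem \ref{newmapping_Hilbert}) holds for arbitrary strictly convex $\OM$; that is acceptable here, but until the limit analysis above (or the uniform estimates of \cite{sadiqTamasan01}) is carried out, you have a correct plan rather than a proof.
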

	For the proof  of Theorem \ref{NecSuf_BukhgeimHilbert_Thm}  we refer to \cite[Proposition 3.1, Theorem 3.2,  Corollary 4.1, and Proposition 4.2]{sadiqTamasan01}.
	
	The results above  need $\OM$ be a strictly convex domain, but not necessarily a unit disk.
	However, the following result \cite[Theorem 4.1]{sadiqTamasan22} uses  $\Omega$ be the unit disk, and $\Gam$ be its unit circle boundary.
	Given $\bg = \langle g_0, g_{-1}, g_{-2}, ... \rangle \in l_{\INF}(\BN; L^1(\Gam))$, we consider the Fourier coefficients of its components 
	\begin{align}\label{gnk_bg}
		g_{-n,k} := \frac{1}{2 \pi} \int_{-\pi}^{ \pi} g_{-n} \left( e^{i  \beta} \right) e^{-i k \beta} d \beta, \text{ for all } n \geq 0, \text{ and } k \in \BZ.
	\end{align}

	
	\begin{theorem}\cite[Theorem 4.1]{sadiqTamasan22} \label{newmapping_Hilbert}
		Let $\bg=  \langle g_{0},g_{-1}, g_{-2} ... \rangle \in l^{1,1}_{\INF}(\Gam)\cap C^\mu(\Gam;l_1)$, $0< \mu <1$, and $g_{-n,k}$ be the Fourier coefficients of its components as in \eqref{gnk_bg}.
		Let $\ds \HT \bg = \langle (\HT \bg)_{0}, (\HT \bg)_{-1},(\HT \bg)_{-2},... \rangle$ be the Bukhgeim-Hilbert transform  acting on $\bg$ as defined in \eqref{BHtransform}.
		Then  $\HT \bg\in C^{\mu}(\Gam;l_\infty)$, and the Fourier coefficients 
		$\ds (\HT \bg)_{-n,k} := \frac{1}{2 \pi} \int_{-\pi}^{ \pi} (\HT \bg)_{-n} \left( e^{i  \beta} \right) e^{-i k \beta} d \beta$, for $ n \geq 0,  k \in \BZ, $
		of its components satisfy 
		\begin{align}\label{Hu_comp}
			(-i) (\HT \bg)_{-n,k}=\left\{
			\begin{array}{ll}g_{-n,k}&\mbox{ if } k\geq 0,\\
				-g_{-n,k}+ 2(-1)^k g_{-n+2k,-k}&\mbox{ if } k\leq -1.
			\end{array}
			\right.\end{align}
	\end{theorem}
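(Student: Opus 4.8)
The plan is to reduce the boundary operator in \eqref{BHtransform} to an explicit Fourier computation, exploiting that here $\Gam$ is the \emph{unit} circle, so that $\ol\zeta=1/\zeta$ and $\ol z=1/z$ for $\zeta,z\in\Gam$. The linchpin is the pair of elementary identities
\begin{equation*}
\frac{d\zeta}{\zeta-z}-\frac{d\ol\zeta}{\ol\zeta-\ol z}=\frac{d\zeta}{\zeta},\qquad\qquad \frac{\ol\zeta-\ol z}{\zeta-z}=-\frac{1}{\zeta z},\qquad \zeta,z\in\Gam,
\end{equation*}
which I would verify directly by writing $\zeta=e^{i\beta}$, $z=e^{i\alpha}$ and simplifying. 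The first identity shows that the combined differential multiplying the series in \eqref{BHtransform} is in fact \emph{regular} on $\Gam\times\Gam$ (its singularity at $\zeta=z$ cancels), so that the only genuinely singular part of $\HT$ is the leading Cauchy integral $\frac1\pi\int_\Gam g_{-n}(\zeta)(\zeta-z)^{-1}d\zeta$. This cancellation is exactly where the hypothesis that $\OM$ is the unit disk is used, and it is what renders the Fourier coefficients computable in closed form.

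Next I would split $(\HT\bg)_{-n}$ into these two pieces and insert the Fourier expansions $g_{-m}(\zeta)=\sum_{l\in\BZ}g_{-m,l}\zeta^{l}$. For the singular leading integral, expanding into monomials reduces everything to the circular Cauchy transform of $\zeta^{l}$, whose principal value I would evaluate by Sokhotski--Plemelj (the p.v. is the average of the interior and exterior Cauchy integrals, which by residues equal $z^{l}$ for $l\ge0$ and $0$ for $l\le-1$ inside, and their negatives outside); this yields the \emph{diagonal} contribution $i\,s(k)\,g_{-n,k}$ to the $k$-th Fourier mode, with $s(k)=+1$ for $k\ge0$ and $s(k)=-1$ for $k\le-1$, accounting after multiplication by $-i$ for the $g_{-n,k}$ term ($k\ge0$) and the $-g_{-n,k}$ term ($k\le-1$) in \eqref{Hu_comp}. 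For the second piece, substituting the two identities turns it into $\frac1\pi\sum_{j\ge1}(-1)^{j}z^{-j}\int_\Gam g_{-n-j}(\zeta)\zeta^{-j}\,\frac{d\zeta}{\zeta}$, a \emph{regular} integral in which the orthogonality $\frac1{2\pi}\int_{-\pi}^{\pi}e^{im\beta}d\beta=\delta_{m,0}$ selects a single surviving Fourier mode from each component $g_{-n-j}$. Collecting these surviving terms shows that the series contributes only to modes $k\le-1$ and, after multiplication by $-i$, produces precisely the off-diagonal cross term $2(-1)^{k}g_{-n+2k,-k}$ of \eqref{Hu_comp}. The asserted smoothness $\HT\bg\in C^{\mu}(\Gam;l_\infty)$ is not re-proved here: it is the mapping property already recorded in Theorem \ref{NecSuf_BukhgeimHilbert_Thm}(i) under the standing hypothesis $\bg\in l^{1,1}_{\INF}(\Gam)\cap C^{\mu}(\Gam;l_1)$, and I would simply invoke it.

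The step I expect to be the main obstacle is \emph{justifying the interchange of the infinite sum over $j$ with the singular integration and with the Sokhotski--Plemelj limit}, uniformly in $z\in\Gam$; the monomial-by-monomial evaluations above are only formal until this is controlled. The weight built into $l^{1,1}_{\INF}(\Gam)$, namely $\sup_\xi\sum_j\jpj\,|g_{-j}(\xi)|<\INF$, together with the H\"older bound from $C^{\mu}(\Gam;l_1)$, is exactly what dominates the tails of the $j$-series and legitimizes passing the principal value inside the sum; once this is secured the remaining index bookkeeping that produces \eqref{Hu_comp} is routine. Should the direct interchange prove delicate, an alternative route is to compute the \emph{interior} Fourier coefficients of $\B\bg$ from \eqref{BukhgeimCauchyFormula}, where all integrals converge absolutely, and then take the boundary limit via the Plemelj jump relations; by Theorem \ref{NecSuf_BukhgeimHilbert_Thm} the two computations must agree, giving an independent check on \eqref{Hu_comp}.
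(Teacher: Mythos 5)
Your overall strategy is the right one, and it is presumably the argument behind the cited result: note first that the paper itself contains \emph{no} proof of this statement --- it is imported wholesale as \cite[Theorem 4.1]{sadiqTamasan22} --- so the only thing to measure your proposal against is the computation that theorem encapsulates. Your two unit-circle identities are correct, the observation that they render the second integral in \eqref{BHtransform} regular (so that only the leading Cauchy integral is singular) is exactly the point where the unit-disk hypothesis enters, and the Sokhotski--Plemelj evaluation of that leading term, giving the diagonal contribution $i\,\mathrm{sgn}(k)\,g_{-n,k}$ and hence the terms $g_{-n,k}$ ($k\geq 0$) and $-g_{-n,k}$ ($k\leq -1$) of \eqref{Hu_comp}, is correct.

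The genuine gap is in the step you dismiss as ``routine index bookkeeping'': carried out under the labeling you (following the theorem's literal statement) adopt, it does \emph{not} produce \eqref{Hu_comp}. In your own expression
\[
\frac{1}{\pi}\sum_{j\geq 1}(-1)^{j} z^{-j}\int_\Gam g_{-n-j}(\zeta)\,\zeta^{-j}\,\frac{d\zeta}{\zeta},
\]
the orthogonality relation selects the $j$-th Fourier coefficient of the component $g_{-n-j}$, i.e.\ the number $g_{-n-j,\,j}$; since $d\zeta/\zeta = i\,d\beta$, the series equals $2i\sum_{j\geq1}(-1)^{j}g_{-n-j,\,j}\,z^{-j}$, and reading off the coefficient of $z^{k}$ with $k=-j\leq -1$ and multiplying by $-i$ yields
\[
(-i)(\HT\bg)_{-n,k} = -g_{-n,k} + 2(-1)^{k}\, g_{-n+k,\,-k},\qquad k\leq -1,
\]
with first index $-n+k$, \emph{not} $-n+2k$. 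So, as written, your proof establishes a formula that contradicts the one asserted. The discrepancy is a labeling issue that must be confronted, not waved through: \eqref{Hu_comp} is the correct formula when the entries of $\bg$ are every \emph{other} angular mode --- exactly how the theorem is invoked in Section 4, where $\bg=\langle g_{-1},g_{-3},g_{-5},\dots\rangle$ (even-order case) or $\bg=\langle g_{0},g_{-2},g_{-4},\dots\rangle$ (odd-order case) --- so that the $j$-th summand of the series in \eqref{BHtransform} is the component $g_{-n-2j}$; then orthogonality selects $g_{-n-2j,\,j}$ and the cross term becomes $2(-1)^{k}g_{-n+2k,\,-k}$, matching \eqref{Hu_comp} and the moment conditions \eqref{RTCond_Eventensor}, \eqref{RTCond_Oddtensor} it is used to derive. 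To close your proof you must either adopt this step-two labeling explicitly (the convention of \cite{sadiqTamasan22}), or record that under the consecutive labeling $\langle g_0,g_{-1},g_{-2},\dots\rangle$ the statement needs $g_{-n+k,-k}$ in place of $g_{-n+2k,-k}$. Two lesser points: your appeal to Theorem \ref{NecSuf_BukhgeimHilbert_Thm}(i) for $\HT\bg\in C^{\mu}(\Gam;l_\infty)$ is not licit as stated, since that result assumes $\bg$ is the boundary value of an $\LL$-analytic map, which is not a hypothesis here (the mapping property for general $\bg$ must be quoted from \cite{sadiqTamasan01}); and the interchange you worry about is less delicate than you fear --- after the identity the series term has a continuous integrand and converges uniformly by $\bg\in l^{1,1}_{\INF}(\Gam)$, so no principal value ever meets the infinite sum.
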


\section{ Proof of Theorems \ref{RangeCharac_Eventensor} and \ref{RangeCharac_Oddtensor}}\label{Sec:pf_mainTh}
	

	Since $\bbf$ is symmetric,  
	for any $m$-tuple $(i_1, \cdots, i_m)\in \{1,2\}^m$ such that $2$ occurs exactly $k$ times (and $1$ occurs $m-k$ times), the component $f_{i_1...i_m}$ satisfies
	\begin{align}\label{eq:cov_tensor}
		f_{i_1...i_m}= f_{ \underbrace{1\cdots1}_{m-k} \underbrace{2\cdots2}_{k}} =:\tilde{f}_k.
	\end{align}
	Since there are ${m \choose k}$ many $m$-tuples $(i_1,i_2, \cdots, i_m)$ that contain exactly $k$ many $2's$, we get
		\begin{align*}\label{eq:innerprod_fThetaM}
			\langle \bbf(z), \btheta^m \rangle &= 
			f_{i_1 \cdots i_m} (z) \theta^{i_1} \cdot \theta^{i_2} \cdots \theta^{i_m} 
			\xlongequal{  \text{}\eqref{eq:cov_tensor} } 
			\sum_{k=0}^{m} {m \choose k} \; \tilde{f}_k (z)  \;  (\cos \theta)^{ (m-k)}  (\sin \theta)^{ k} \\
			&= (e^{-\i \tta} )^m  \sum_{k=0}^{m} \frac{(-\i)^k}{2^m}  {m \choose k} \tilde{f}_k (z)   \;  Q_{m,k} (e^{2\i \tta}),
		\end{align*} where  $Q_{m.k} (t) =  \left( t+ 1 \right)^{m-k} \left( t - 1 \right)^k$.
	
	Since $\ds \{Q_{m,k}(t)\}_{k=0}^{m}$ form a basis for the space of polynomials of degree $m$,
	\begin{equation}\label{eq:innerprod_fThetaM}
		\begin{aligned}
			\langle \bbf, \btheta^m \rangle   &=
			\begin{cases} 
				\ds \sum_{k=0}^{q} f_{2k} e^{-\i (2k)\tta} + \sum_{k=1}^{q} f_{-2k} e^{\i (2k)\tta},   &\mbox{ if  } m =2q,\\
				\ds \sum_{k=0}^{q} f_{2k+1} e^{-\i (2k+1)\tta} + f_{-(2k+1)} e^{\i (2k+1)\tta},  &\mbox{ if  } m =2q+1,
			\end{cases}
		\end{aligned}
	\end{equation} 
	where $f_k $'s are in a one -to-one correspondence to $\tilde{f}_k$, and thus with $f_{i_1 \cdots i_m} $. We refer to the Lemma  \ref{Qlemma} in the appendix for details on this one-to-one correspondence in \eqref{eq:innerprod_fThetaM}.

	We approach the range characterization via the well-known connection with the transport model, where the unique solution $u(z,\btheta)$ to the boundary value problem
	\begin{subequations}\label{bvp_transport}
		\begin{align}\label{TransportEq1}
			\btheta\cdot\nabla u(z,\btheta) &=  2\langle  \bbf(z), \btheta^m \rangle
			\\  \label{u_Gam-}
			u \lvert_{\Gam_{-}} &= - X \bbf
		\end{align}
	\end{subequations}
	has the trace $u\lvert_{\Gam\times\sph} = g $, with $g$ in \eqref{eq:g_mEven}, i.e.
	\begin{equation}\label{eq:gstar1}
		u\lvert_{\Gam\times\sph}=
		\left\{
		\begin{array}{ll}
			X \bbf ,&\mbox{ on } \Gam_+,\\
			- X \bbf, &\mbox{ on } \Gam_-.
		\end{array}
		\right.
	\end{equation} 
\begin{prop}\label{prop:Uoddeven_mEvenOdd}
	(a)  If $m = 2q, \, q\geq 0$, then the solution $u$ to  the boundary value problem \eqref{bvp_transport} is an odd function of $\btheta$, 
	       $$u(z,\btheta) = - u(z, -\btheta).$$
	(b)  If $m = 2q+1, \, q\geq 0$, then the solution $u$ to  the boundary value problem   \eqref{bvp_transport} is an even function of $\btheta$,    $$u(z,\btheta) =  u(z, -\btheta).$$    
\end{prop}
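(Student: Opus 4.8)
The plan is to exploit the uniqueness of the inflow boundary value problem \eqref{bvp_transport} together with the behaviour of both the source term $\langle\bbf,\btheta^m\rangle$ and the boundary data $X\bbf$ under the antipodal map $\btheta\mapsto-\btheta$. First I would introduce the reflected function $v(z,\btheta):=u(z,-\btheta)$ and determine which transport problem it solves. Since $\nabla_z v(z,\btheta)=(\nabla_z u)(z,-\btheta)$, evaluating \eqref{TransportEq1} in the direction $-\btheta$ and using the homogeneity $\langle\bbf(z),(-\btheta)^m\rangle=(-1)^m\langle\bbf(z),\btheta^m\rangle$ gives $\btheta\cdot\nabla_z v(z,\btheta)=2(-1)^{m+1}\langle\bbf(z),\btheta^m\rangle$. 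Hence for even $m$ the function $-v$ satisfies exactly \eqref{TransportEq1}, while for odd $m$ the function $v$ itself does.

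Second, I would check that the reflected function carries the correct inflow data on $\Gam_-$. The key geometric fact is that the antipodal map $\btheta\mapsto-\btheta$ (that is, $\theta\mapsto\theta+\pi$) sends $\Gam_-$ to $\Gam_+$, since with $\alpha=\theta-\beta$ it turns $|\alpha|>\pi/2$ into $|\alpha|<\pi/2$. Thus for $(z,\btheta)\in\Gam_-$ we have $(z,-\btheta)\in\Gam_+$, where \eqref{eq:gstar1} gives $u(z,-\btheta)=X\bbf(z,-\btheta)$, and the homogeneity \eqref{eq:xrayTransform_hom} of the $X$-ray transform yields $X\bbf(z,-\btheta)=(-1)^m X\bbf(z,\btheta)$. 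For even $m$ this produces $-v\lvert_{\Gam_-}=-X\bbf=u\lvert_{\Gam_-}$, and for odd $m$ it produces $v\lvert_{\Gam_-}=-X\bbf=u\lvert_{\Gam_-}$.

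Finally, in each case I would conclude by uniqueness of the solution to the inflow problem \eqref{bvp_transport}, the solution being obtained by integrating along the characteristic lines from the influx boundary: for $m$ even both $u$ and $-v$ solve the same transport equation with the same data on $\Gam_-$, so $u=-v$, i.e. $u(z,\btheta)=-u(z,-\btheta)$; for $m$ odd both $u$ and $v$ do, so $u=v$, i.e. $u(z,\btheta)=u(z,-\btheta)$. The only point requiring care is the sign bookkeeping: the factor $(-1)^m$ enters twice, once from the parity of the source term $\langle\bbf,\btheta^m\rangle$ and once from the parity \eqref{eq:xrayTransform_hom} of the data $X\bbf$, and these must be tracked consistently so that the transformed equation and the transformed inflow condition are matched against the same object. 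Verifying that the antipodal map interchanges $\Gam_+$ and $\Gam_-$, and that the transport problem is uniquely solvable from its inflow data, are the two structural facts underpinning the argument; neither is a serious obstacle, so the proof is essentially a disciplined sign computation followed by an appeal to uniqueness.
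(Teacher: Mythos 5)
Your proposal is correct and follows essentially the same route as the paper: both arguments rest on the parity of the source $\langle\bbf,\btheta^m\rangle$, the parity \eqref{eq:xrayTransform_hom} of $X\bbf$ together with the fact that $\btheta\mapsto-\btheta$ exchanges $\Gam_+$ and $\Gam_-$, and uniqueness for the inflow transport problem. The paper merely packages the same computation differently---it splits $u$ into its angularly odd and even parts and shows that the wrong-parity part solves the homogeneous transport equation with zero data on $\Gam_-$, hence vanishes---which by linearity is exactly your assertion that $-u(z,-\btheta)$ (resp.\ $u(z,-\btheta)$) solves the same boundary value problem as $u$.
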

\begin{proof}
 (a)  We note that for even $m \geq 0$, $(-\btheta)^m = \btheta^m$. \\
     If $u^{\text{odd}}(z,\btheta):=\frac{1}{2}\left[u(z,\btheta)-u(z,-\btheta)\right]$ denotes the angularly odd part of $u$, then 
	\begin{align*}
		\btheta\cdot\nabla u^{\text{odd}}(z,\btheta) = \frac{1}{2}\left[\btheta\cdot\nabla u(z,\btheta)+(-\btheta)\cdot\nabla u(z,-\btheta) \right]
    =2\langle  \bbf(z), \btheta^m \rangle,
	\end{align*}and $u^{\text{even}}:=u -u^{\text{odd}}$ solves
	\begin{subequations}\label{bvp_transport_even}
		\begin{align}
			\btheta\cdot\nabla u^{\text{even}}(z,\btheta) &=  0 , \quad (z,\btheta)\in \OM\times\sph, \\  
			u^{\text{even}}\lvert_{\Gam_{-}} &=0.
		\end{align}
	\end{subequations}
	Since \eqref{bvp_transport_even} has the unique solution $u^{\text{even}}\equiv 0$, $u=u^{\text{odd}}$ in $\ol\OM\times\sph$. \\
	(b)   We note that for odd $m \geq 1$, $(-\btheta)^m =  - \btheta^m$. \\
	If $u^{\text{even}}(z,\btheta):=\frac{1}{2}\left[u(z,\btheta)+u(z,-\btheta)\right]$ denotes the angularly even part of $u$, then 
	\begin{align*}
		\btheta\cdot\nabla u^{\text{even}}(z,\btheta) = \frac{1}{2}\left[\btheta\cdot\nabla u(z,\btheta)
		-(-\btheta) \cdot\nabla u(z,-\btheta) \right]
		= 2\langle  \bbf(z), \btheta^m \rangle,
	\end{align*}and $u^{\text{odd}}:=u -u^{\text{even}}$ solves
	\begin{subequations}\label{bvp_transport_Modd}
		\begin{align}
			\btheta\cdot\nabla u^{\text{odd}}(z,\btheta) &=  0 , \quad (z,\btheta)\in \OM\times\sph, \\  
			u^{\text{odd}}\lvert_{\Gam_{-}} &=0.
		\end{align}
	\end{subequations}
	Since \eqref{bvp_transport_Modd} has the unique solution $u^{\text{odd}}\equiv 0$, $u=u^{\text{even}}$ in $\ol\OM\times\sph$.
\end{proof}
 
 
 We present in detail the proof for the even tensor $m=2q$ (Theorem \ref{RangeCharac_Eventensor}).
 For the odd tensor case  (Theorem \ref{RangeCharac_Oddtensor}), the proof is essentially the same, where small changes occur due to the change in parity.

	(i) {\bf Necessity of Theorem \ref {RangeCharac_Eventensor}:} Since $g$ is angularly odd, \eqref{RTCond_odd_Eventensor} holds.
	Since $g$ is real valued, \eqref{RTCond_reality_mEven} holds. The identities \eqref{RT_FourierOdd_Eventensor} follow by direct calculation, see Lemma \ref{lem:evenness}.

	We will first prove \eqref{RTCond_Eventensor} for  an even order $m=2q$ tensor $\bbf= (f_{i_1i_2...i_m})$ with smooth components $f_{i_1i_2...i_m}  \in C^{2}_0(\OM)$. The result for components $f_{i_1i_2...i_m} \in L^1(\OM)$ follows by a density argument.
	
	By using  the notations $\dba = (\del_{x_1}+i\del_{x_2})/2$, $\del =(\del_{x_1}-i\del_{x_2})/2$, and $\theta=\arg{\btheta}\in (-\pi,\pi]$,
the transport equation \eqref{TransportEq1} becomes 
	\begin{align}\label{TransportEq_mEven}
		&[e^{-i\tta}\dba + e^{i\tta}\del] u(z,\btheta)= \sum_{k=-q}^{q} f_{2n}(z) e^{- \i (2n) \tta}, \quad (z,\btheta)\in \OM\times\sph.
	\end{align}

	 By Proposition \ref{prop:Uoddeven_mEvenOdd} (a),  the solution $u$ to \eqref{TransportEq_mEven} 
	 is an odd function of $\btheta$, thus 
	all of its Fourier coefficients (in the angular variable) of even order vanish,
	$$u(z,\btheta)=\sum_{\substack{n \in \BZ\\	n  =\,\text{odd}}}  u_{n}(z) e^{in\tta}.$$
	By identifying the Fourier modes of the same order, the equation \eqref{TransportEq_mEven} reduces to the system:
	\begin{align} \label{NART_mEven_fmEq}
		&\ol{\del} u_{-(2n-1)}(z) + \del u_{-(2n+1)}(z) = f_{2n}(z), && 0 \leq n \leq q,\\ 
		\label{NARTmEvenAnalyticEq_Odd}
		&\ol{\del} u_{-(2n-1)}(z) + \del u_{-(2n+1)}(z) = 0, && n \geq q+1.
	\end{align}
	
	Since $\bbf$ is real valued, the solution $\ds u(z,\btheta) $ of \eqref{TransportEq_mEven} is also  real valued, and its Fourier modes in the angular variable occur in conjugates, $u_{-n}=\ol{u_n}$. Thus, it suffices to consider the non-positive odd Fourier modes of $u(z,\cdot)$: Let $\bu$ be  the sequence valued map
	\begin{align}\label{boldu1}
		\OM \ni z\mapsto  \bu(z)&: = \langle u_{-1}(z), u_{-3}(z),u_{-5}(z),... \rangle.
	\end{align}
	
	Since all the components $f_{i_1i_2...i_m}  \in C^{2}_0(\OM)$, $X\bbf\in C^2(\Gam_-)$, and, thus, the solution $u$ to the transport problem \eqref{TransportEq_mEven} is in $C^{2}(\ol \OM\times \sph)$. 
	Moreover, its trace $u \lvert_{\Gam \times \sph}\in C^2(\Gam\times\sph)$.
	
	 
	For $z=e^{\i\beta}\in\Gam$, we use the negative odd Fourier modes of the trace $g(e^{i\beta},\cdot)=u \lvert_{\Gam \times \sph}(e^{\i\beta},\cdot)$ to define the sequence valued map
	\begin{align}\label{eq:bg1}
	\bg(e^{i \beta}) := \langle g_{-1} (e^{i \beta}), g_{-3}(e^{i \beta}), ... \rangle, \mbox{ with } g_{-2n-1}(e^{i \beta}) = \frac{1}{2 \pi} \int_{-\pi}^{ \pi} g(e^{i \beta}e^{i\theta}) e^{i (2n+1)\theta} d\theta.
	\end{align}
	
	Since $u\in C^{2}(\ol \OM\times \sph)$, $\bu \in C^1(\ol \OM; l_1)$, and, thus,
	\begin{align}\label{trace}
	\bg = \bu \lvert_{\Gam} \, \in C^1(\Gam; l_1) \subset  l^{1,1}_{\INF}(\Gam) \cap C^{\mu}(\Gam; l^1), \mu >1/2.\end{align}
	
	By \eqref{NARTmEvenAnalyticEq_Odd}, 
	$\ds \LL^{q}\bu = 	\langle u_{-(2q+1)}, u_{-(2q+3)},  \cdots \rangle$
	is $\LL$-analytic in $\OM$  and its trace $\LL^{q} \bg = \LL^{q}  \bu \lvert_{\Gam}\ds$ is the boundary value of an $\LL$-analytic map. 
	
	Recall that $\HT$ is the Bukhgeim-Hilbert operator in \eqref{BHtransform}. By the necessity part in Theorem \ref{NecSuf_BukhgeimHilbert_Thm}, we have $\HT (\LL^{q} \bg) \in C^{\mu}(\Gam; l_\INF)$ and
	$\ds	(I+i\HT)  (\LL^{q} \bg)= \bzero.$
	
	Since $\HT$ commutes with the left translation $\LL$, we obtained
\begin{align}\label{Hg_char}
		\LL^{q} (I+i\HT)  (\bg)= \bzero.
	\end{align}
	In particular, for all odd $n \leq -2q-1$, and $k \in \BZ$, we obtained
	\begin{align}\label{Hgnk_char}
		([I+i\HT] \bg)_{n,k}= 0.
	\end{align}

	By Theorem \ref{newmapping_Hilbert}, the Fourier coefficients $\ds (\HT \bg)_{n,k}$, for odd $ n \leq -2q-1$ and  $k \in \BZ$,
	satisfy \eqref{Hu_comp}, 
	and thus
	\begin{align*}
		([I+i\HT] \bg)_{n,k}=\left\{
		\begin{array}{ll} 0 &\mbox{ if } k\geq 0,\\
			2g_{n,k}-2 (-1)^k  g _{n+2k,-k}&\mbox{ if } k\leq -1.
		\end{array}
		\right.\end{align*}In conjunction with \eqref{Hgnk_char}, the Fourier coefficients of $\bg$ must satisfy \eqref{RTCond_Eventensor}, i.e., 
	\begin{align*}
		g_{n,k}=  (-1)^k  g_{n+2k,-k}, \text{ for all odd } n\leq -2q-1, \text{ and } k\leq -1.
	\end{align*}
	Equation \eqref{RTCond_Eventensor} for $k=0$ is trivially satisfied.
	
	The proof for $\bbf \in L^1(\mathbf{S}^m; \OM) $ follows from the density of $C^2_0( \OM)$ in $L^1(\OM)$.
	
	\vspace{1cm}

	(ii) {\bf Sufficiency of Theorem \ref {RangeCharac_Eventensor}:} Recall $m=2q$, $q\geq 0$. Given the double sequence $\{g_{n,k}\}$ for all odd $n \leq -2q-1$, and $k\in \BZ$, we construct a real valued symmetric $m$- tensor $\bbf$ in $\OM$ such that  the map on the torus
	$\ds \left\{
	\begin{array}{ll}
		X \bbf \text{ on }\Gam_+,\\
		-X \bbf \text{ on }\Gam_-\\
	\end{array}\right.$
	has the  Fourier coefficients matching the $\{g_{n,k}\}$'s.
	
	Recall the construction in \eqref{g_sequence_mEVEN},
	\begin{align}\label{gnk_bg1}
		g_{-n}(e^{i\beta}): = \sum_{k=-\infty}^\infty g_{-n,k} \, e^{i k \beta}, \; \emph{for odd } n \leq -2q-1,\quad e^{i  \beta}\in \Gam,
		\end{align}and define the sequence valued map on $\Gam$
	\begin{align}\label{bg_gnk}
		\bg^{\text{odd}}(e^{i  \beta}) :=
			\langle g_{-(2q+1)}(e^{i  \beta}), g_{-(2q+3)}(e^{i  \beta}),  g_{-(2q+5)}(e^{i  \beta}),\cdots \cdot \rangle.
	\end{align}
	
	By the decay assumption \eqref{gnk_decay},  $\bg^{\text{odd}} \in l^{1,2}_{\INF}(\Gam) \cap C^{1,\mu}(\Gam; l^1)$; see 
	Lemma \ref{prop:bg_gnk} in the appendix. In particular, $\bg^{\text{odd}} \in  Y_{\mu}(\Gam)$ for $\mu>1/2$.
	
	We use the Bukhgeim-Cauchy integral formula \eqref{BukhgeimCauchyFormula} to
 construct the sequence valued map $\bu^{\text{odd}} (z)$ inside $\OM$:
 	\begin{align}\label{u_construction}
		\bu ^{\text{odd}}  (z) = 	\langle u_{-(2q+1)}(z), u_{-(2q+3)}(z),  \cdots \rangle:= \B \left[ \bg^{\text{odd}}\right] (z), \quad z\in \OM.
	\end{align}

	By Theorem \ref{BukhgeimCauchyThm} (ii), the constructed $\bu ^{\text{odd}} \in C^{1,\mu}(\OM;l^{1}) \cap C^{\mu}(\ol \OM;l^{1})$  is $\LL$-analytic in $\OM$,
	\begin{align}\label{vneg}
		\ol{\del} u_{n} + \del u_{n-2} = 0,\quad \text{for all odd } n\leq -2q-1.
	\end{align}

	While $\bu^{\text{odd}}$ constructed in \eqref{u_construction}  is $\LL$-analytic, in general, its trace $ \bu^{\text{odd}} \lvert_{\Gam}$  need not be equal to $\bg^{\text{odd}}$. 
	It is at this point that the constraints   \eqref{RTCond_Eventensor} come  into play.
	By using the hypothesis \eqref{RTCond_Eventensor},
	\begin{align*}
		g_{n,k}=  (-1)^k  g_{n+2k,-k}, \quad \text{for odd } \;   n \leq -2q-1,\;\text{and}\; k\leq -1,
	\end{align*} and Theorem \ref{newmapping_Hilbert},	
	we obtain 	$$\ds([I+i\HT] \bg^{\text{odd}})_{n,k}= 0, \text{ for all odd } n \leq -2q-1, \text{ and } k \in \BZ.$$

	Thus, $\ds[I+i\HT] \bg^{\text{odd}}= \bzero$, and the sufficiency part of Theorem \ref{NecSuf_BukhgeimHilbert_Thm} applies to yield
	\begin{align}\label{buodd-trace_bgodd}
		\bu^{\text{odd}} \lvert_{\Gam} = \bg^{\text{odd}}.
	\end{align}
	
	All of the positive Fourier modes $u_n, g_n$ for odd $n\geq 2q+1$  are constructed by conjugation,
	\begin{align}\label{construct_vpos}
		u_{n}&:=\ol{u_{-n}},\quad \text{in}\; \OM,  \\
		g_{n}&:=\ol{g_{-n}}, \quad \text{on}\; \Gam. 
	\end{align}
	Also, by conjugating \eqref{vneg} we note that the positive Fourier modes satisfy
	\begin{align*}
		\ol{\del} u_{n+2} + \del u_{n} = 0,\quad \text{for all odd } n\geq 2q+1.
	\end{align*}Moreover, using \eqref{buodd-trace_bgodd} they extend continuously to $\Gam$ and
	\begin{align*}
		u_{n}|_\Gam=\ol{u_{-n}}|_\Gam=\ol{g_{-n}}=g_{n},\quad \text{odd } n\geq 2q+1.
	\end{align*}
	
	In summary, we have shown that 
	\begin{alignat}{2}\label{allmodes}
		&\ol{\del} u_{n} + \del{u_{n-2}} = 0,  && \quad \text{for all odd integers }  |n|\geq 2q+3,\\ \label{alluTrace_g}
		&u_{n}\lvert_{\Gam} = g_{n}, && \quad \text{for all odd integers }  |n|\geq 2q+1.
	\end{alignat}

In the case of the 0-tensor, $\bbf=f_0$, which is defined directly from $u_{-1}$ by $\ds f_0 =\del u_{-1}+\ol{\del u_{-1}}$.

We consider next the case $q\geq 1$ of tensors of order 2 or higher. 

Recall the construction \eqref{g_sequence_mEVEN},
	\begin{align}\label{gnk_bg11}
		g_{-2n+1}: = \sum_{k=-\infty}^\infty g_{-2n+1,k} \, e^{i k \beta}, \text{ for } 1\leq  n\leq q,
		\end{align}and define $g_{1}, g_{3}, ..., g_{2q-1}$ by conjugation
\begin{align}\label{g_conjOdd}
g_{2n-1}: = \ol{g_{-2n+1}}, 1\leq  n\leq q.	
\end{align}
Also recall	 the non-uniqueness class $\Psi_{g}^{\text{even}}$ in \eqref{NART_mEvenPsiClass}. 

For $\left( \psi_{-1}, \psi_{-3}, \cdots \psi_{-(2q-1)}\right)\in \Psi_{g}^{\text{even}}$ arbitrary, 
define the modes $u_{\pm 1}, u_{\pm 3}, ..., u_{\pm (2q-1)}$ in $\OM$ by
\begin{equation}\label{NART_mEven_uPsi}
		u_{-(2n-1)} := \psi_{-(2n-1)} \text{ and } u_{2n-1} := \ol{\psi}_{-(2n-1)}, \quad 1 \leq n \leq q.
\end{equation}By the definition of the class \eqref{NART_mEvenPsiClass} and \eqref{g_conjOdd}, 
\begin{equation}\label{u1trace}
	\begin{aligned}
		u_{-(2n-1)} \lvert_{\Gam} &= g_{-(2n-1)}, \quad 1 \leq n \leq q, \; q \geq 1, \quad \text{and} \\
		u_{2n-1} \lvert_{\Gam} &= \ol{g}_{-(2n-1)}= g_{2n-1}, \quad 1 \leq n \leq q, \; q \geq 1.
	\end{aligned}
\end{equation}

The components of the $m$-tensor $\bbf$ are defined via the one-to-one correspondence between  $\{\tilde{f}_{2n}:\; -q\leq n\leq q\}$ in \eqref{eq:cov_tensor} and the functions  $\{f_{2n}:\; -q\leq n\leq q\}$ as follows. 

We define $f_{2q}$  by using $\psi_{-(2q-1)}$ from the non-uniqueness class, and $u_{-(2q+1)}$ 
from the Bukhgeim-Cauchy formula \eqref{u_construction},
\begin{align}\label{first}
2f_{2q}:= \ol{\del} \psi_{-(2q-1)}+ \del u_{-(2q+1)}.
\end{align}
Then, define $\{f_{2n}:\; 0 \leq n \leq q-1\}$ solely from the information in the non-uniqueness class via
\begin{align} 
2f_{2n}:= \ol{\del} \psi_{-(2n-1)} + \del \psi_{-(2n+1)},\;\;0 \leq n \leq q-1,\label{NART_mEven_fmPsiEq}
\end{align}Note that $f_0$ is real valued. 
Finally, define $\{f_{-2n}:\; 1\leq n\leq q\}$ by conjugation 
\begin{align}\label{lastDef_f}
f_{-2n}:=\ol{f_{2n}},\quad 1 \leq n \leq q.
\end{align}
By construction, $f_{2n}\in L^1(\OM)$, $-q\leq n\leq q$. Moreover, if $\psi_{-1},\cdots,\psi_{-2q+1}\in C^{1,\mu}(\OM)$, then  $f_{2n}\in C^\mu(\OM)$.

In the remaining of the proof  we check that, for the tensor $\bbf$ defined above, the Fourier coefficients of the map $\ds \left\{
	\begin{array}{ll}
		X \bbf \text{ on }\Gam_+,\\
		-X \bbf \text{ on }\Gam_-\\
	\end{array}\right.$ match the given $g_{n,k}$'s for all odd $n\in\BZ$ and $k\in \BZ$.


	
	With $u_n$ defined in \eqref{u_construction} for odd $n\leq -2q-1$ (and by conjugation for odd $n\geq 2q+1$), and  with $\psi_{-2n+1}$ given by the non-uniqueness class (and $\psi_{2n-1}$ given by conjugation) for $0\leq n\leq q$, 
	we define the following two functions:
	\begin{align}\label{definitionU}
		u^{\text{odd}}(z, e^{i\theta})&:= \sum_{|n| \geq q} u_{2n+1}e^{\i (2n+1)\tta} +
		 \sum_{n=1}^{q} \psi_{-(2n-1)} e^{ - \i (2n-1)\tta} + \sum_{n = 1}^{q} \ol{\psi}_{-(2n-1)}e^{\i (2n-1)\tta}
	\end{align}
	and
	\begin{align}\label{definitiong}
		g(z, e^{i\theta})&:=  \sum_{\substack{n=- \INF \\ n = \,\text{odd}}}^{\INF}  g_{n}(z)e^{i n\tta}, \quad (z,\theta)\in\Gam\times\sph.
	\end{align} 

	Since $\bu ^{\text{odd}} \in C^{1,\mu}(\OM;l^{1}) \cap C^{\mu}(\ol \OM;l^{1})$, we employ \cite[Corollary 4.1]{sadiqTamasan01} and
	\cite[Proposition 4.1 (iii)]{sadiqTamasan01} to infer the regularity $u^{\text{odd}} \in C^{1,\mu}(\OM \times \sph)\cap C^{\mu}(\ol{\OM}\times \sph)$. 
	
	In particular,  for each $e^{i\theta}\in \sph$, the trace of $u^{\text{odd}}(\cdot,e^{i\theta})$ on $\Gam$   satisfies
	\begin{align} \label{utrace_g}
		u^{\text{odd}}(\cdot,e^{i\theta})\lvert_{\Gam} &=\left.\left ( \sum_{\substack{n= - \INF \\ n = \,\text{odd}}}^{\INF}    u_{n}e^{i n\tta}\right) \right\lvert_{\Gam}
		= \sum_{\substack{n= - \INF \\ n = \,\text{odd}}}^{\INF}   \left ( u_{n}\lvert_{\Gam} \right)  e^{i n\tta} 
		= \sum_{\substack{n=- \INF \\ n = \,\text{odd}}}^{\INF}    g_{n}e^{i n\tta} = g(\cdot,e^{i\theta}),
	\end{align} 
	where the third equality above uses \eqref{alluTrace_g} and  \eqref{u1trace}. Note that, since $u^{\text{odd}}\in C^{\mu}(\ol{\OM}\times \sph)$, we can conclude now that $g$ defined in \eqref{definitiong} lies in $C^{\mu}(\Gam \times \sph)$.

	

	Since the term by term differentiation in \eqref{definitionU} is now justified,
	
		\begin{align*}
		\btheta \cdot \nabla u^{\text{odd}} &=
		 \sum_{n=0}^{q-1} (\ol{\del} \psi_{-(2n-1)} + \del \psi_{-(2n+1)} )e^{ -\i (2n)\tta} + \sum_{n=1}^{q-1} (\ol{\del} \; \ol{\psi}_{-(2n+1)} +\del \ol{\psi}_{-(2n-1)})e^{ \i (2n)\tta} \\
		&\qquad + e^{-\i (2q) \tta} (\ol{\del}\psi_{-(2q-1)}+\del u_{-(2q+1)})   + e^{\i (2q) \tta} (\del \ol{\psi}_{-(2q-1)} + \ol{\del} \; \ol{u}_{-(2q+1)})\\
		&=  2 \sum_{k=-q}^{q} f_{2k}(z) e^{- \i (2k) \tta} =  2\langle \bbf,  \btheta^{2q} \rangle
	\end{align*}
	where the cancellation uses \eqref{allmodes}, and the second equality uses the definition of $f_{2k}$'s in  \eqref{first}, \eqref{NART_mEven_fmPsiEq}, and \eqref{lastDef_f}.


	We remark next two properties of the function $g$ defined in \eqref{definitiong} which are needed later: Since only the odd modes  in the angular variable are used, $g(z,\cdot)$ is an odd function, and thus  the odd condition \eqref{sym**} is satisfied. In addition, we claim that $g$ also satisfies the symmetry condition \eqref{sym*}. Indeed,
	
	\begin{align*}g (e^{i (2 \theta -\beta -\pi)}, e^{i(\theta +\pi)}) &=\sum_{\substack{m=- \INF \\m = \,\text{odd}}}^{\INF}  \sum_{p \in \BZ} g_{m,p} e^{i m (\theta+\pi)}  e^{i p (2 \theta -\beta -\pi)}\\
		&=  \sum_{\substack{m=- \INF \\ m = \,\text{odd}}}^{\INF}   \sum_{p \in \BZ} (-1)^{m+p}g_{m,p}  e^{i (2p+m) \theta}  e^{-i p \beta }\\
		& \xlongequal{ m=n+2k, \;  p=-k}\sum_{\substack{n=- \INF \\ n = \,\text{odd}}}^{\INF}    \sum_{k \in \BZ}  (-1)^{n+k}g_{n+2k,-k} e^{i n \theta}  e^{i k \beta }\\
		& \xlongequal{ \eqref{RT_FourierOdd_Eventensor}} \sum_{\substack{n=- \INF \\ n = \,\text{odd}}}^{\INF}   \sum_{k \in \BZ} g_{n,k} e^{i n \theta}  e^{i k \beta }=g(e^{i
			\beta},e^{i\theta}).\numberthis\label{g_symm*}
	\end{align*}
	

	We have all the ingredients needed to show that $g$ coincides with $X\bbf$ on $\Gam_+$. In the following calculation we let 
	$ l(\beta, \theta)  = \lvert e^{i \beta} -e^{i (2\theta -\beta-\pi)}  \rvert$ denote the length of the chord in Figure \ref{fig:fanbeam1}, and use the geometric equality
	$e^{i\beta}-l(\beta,\theta)e^{i\theta}=e^{i(2\theta-\beta-\pi)}$. For each $(e^{i \beta},e^{i \theta}) \in \Gam_+$,
	\begin{align*}
		2 g(e^{i \beta},e^{i \theta}) &= g(e^{i \beta},e^{i \theta})- g(e^{i\beta},e^{i (\theta+\pi)})\\ 
		&= g(e^{i \beta},e^{i \theta})- g(e^{i (2\theta -\beta-\pi)},e^{i \theta})\\ 
		& =u^{\text{odd}}(e^{i \beta},e^{i \theta})- u^{\text{odd}}(e^{i (2\theta -\beta-\pi)},e^{i \theta}) 
		\\ 
		&
		= \int_{-  l(\beta, \theta) }^0  \btheta\cdot\nabla u ^{\text{odd}}(e^{i \beta}+t e^{i \theta}, e^{i\theta})dt\\ 
		&=\int_{- l(\beta, \theta) }^0 2 \langle \bbf ( e^{i \beta} +te^{i \theta}), \btheta^m \rangle dt
		= 2[X \bbf] (e^{i \beta},e^{i \theta}), \numberthis  \label{eq:g_Xf}
	\end{align*}
	where the first equality uses $g(e^{i\beta},\cdot)$ is angularly odd, the second equality uses the symmetry relation \eqref{g_symm*} with $\theta$ replaced by $\theta+\pi$, 
	the third equality uses \eqref{utrace_g},
	the fourth equality is the fundamental theorem of calculus, the fifth 
	equality uses \eqref{TransportEq_mEven}, and the last equality uses the support of $\bbf$ in $\OM$. 
	
	Therefore $g =X \bbf$ on $\Gam_+$, and, since $g$ is angularly odd, $g=-X \bbf$ on $\Gam_-$.
	
	The equation \eqref{eq:g_Xf} also shows that components of $m$- tensor field $\bbf$ integrates along all lines in the direction of $\btheta$, and thus $\bbf\in 	L^1(\mathbf{S}^m; \OM) $.
	
	
	This finishes the proof of the even order tensor case.	 
	
	\qed  

	The analytical reasoning in the proof of Theorem \ref{RangeCharac_Oddtensor} (the odd order tensor case) is the same as above. The change in parity of the order of the tensor merely modifies the algebraic statements. 
	We  sketch them below for the sake of completeness.

		(i) {\bf Necessity of Theorem \ref {RangeCharac_Oddtensor}:} Since $g$ is angularly even, \eqref{RTCond_even_Oddtensor} holds.
Since $g$ is real valued, \eqref{RTCond_reality_mOdd} holds. The identities \eqref{RT_FourierEven_Oddtensor} follow by direct calculation, see Lemma \ref{lem:evenness}.

	As in the even case, it suffices to prove \eqref{RTCond_Oddtensor} for  odd $m=(2q+1)$-tensor with components in $C^{2}_0(\OM)$. 
	
	The transport equation \eqref{TransportEq1} has the right hand side modified as
	\begin{align}\label{TransportEq_mOdd}
	&[e^{-i\tta}\dba + e^{i\tta}\del] u(z,\btheta)=\sum_{n=0}^{q}\left( f_{2n+1} e^{-\i (2n+1)\tta} + f_{-(2n+1)} e^{\i (2n+1)\tta}\right), \quad (z,\btheta)\in \OM\times\sph.
	\end{align}

	By Proposition \ref{prop:Uoddeven_mEvenOdd} (b),  the solution $u$ to \eqref{TransportEq_mOdd} 
	is an even function of $\btheta$, thus 
	all of its Fourier coefficients (in the angular variable) of odd order vanish,
	$$u(z,\btheta)=\sum_{\substack{n \in \BZ\\	n  =\,\text{even}}}  u_{n}(z) e^{in\tta}.$$
	
	By identifying the Fourier modes of the same order, the equation \eqref{TransportEq_mEven} reduces to the system:
	\begin{align} \label{NART_mOdd_fmEq}
	&\ol{\del} u_{-2n}(z) + \del u_{-(2n+2)}(z) = f_{2n+1}(z), && 0 \leq n \leq q, \; q \geq 0, \\ \label{NARTmOddAnalyticEq_Even}
	&\ol{\del} u_{-2n}(z) + \del u_{-(2n+2)}(z) = 0, 
	&& n \geq q+1, \; q \geq 0,  
	\end{align}
	
	Since $\bbf$ is real valued, the solution $\ds u(z,\btheta) $ of \eqref{TransportEq_mEven} is also  real valued, and its Fourier modes in the angular variable occur in conjugates, $u_{-n}=\ol{u_n}$. 
	Thus, it suffices to consider  the non-positive even Fourier coefficients of $u(z,\cdot)$: Let $\bu$ be  the sequence valued map
	\begin{align}\label{boldu2}
	\OM \ni z\mapsto  \bu(z)&: = \langle u_{-2}(z),u_{-4}(z),... \rangle.
	\end{align}
	
	Since all the components $f_{i_1i_2...i_m}  \in C^{2}_0(\OM)$, $X\bbf\in C^2(\Gam_-)$, and, thus, the solution $u$ to the transport problem \eqref{TransportEq_mOdd} is in $C^{2}(\ol \OM\times \sph)$. 
	Moreover, its trace $u \lvert_{\Gam \times \sph}\in C^2(\Gam\times\sph)$.

	For $z=e^{\i\beta}\in\Gam$, we use the negative even Fourier modes of the trace $g(e^{i\beta},\cdot)=u \lvert_{\Gam \times \sph}(e^{\i\beta},\cdot)$ to define the sequence valued map
	\begin{align}\label{eq:bg2}
		\bg(e^{i \beta}) := \langle g_{-2}(e^{i \beta}), g_{-4}(e^{i \beta}), ... \rangle, \mbox{ with } g_{-2n}(e^{i \beta}) = \frac{1}{2 \pi} \int_{-\pi}^{ \pi} g(e^{i \beta}e^{i\theta}) e^{i (2n)\theta} d\theta.
	\end{align}

	Since $u\in C^{2}(\ol \OM\times \sph)$, $\bu \in C^1(\ol \OM; l_1)$, and, thus,
	\begin{align}\label{trace2}
		\bg = \bu \lvert_{\Gam} \, \in C^1(\Gam; l_1) \subset  l^{1,1}_{\INF}(\Gam) \cap C^{\mu}(\Gam; l^1), \mu >1/2.\end{align}
	
	By \eqref{NARTmOddAnalyticEq_Even},	
	$\ds 	\LL^{q}\bu = 	\langle u_{-(2q+2)},u_{-(2q+4)},  \cdots \rangle$
	is $\LL$-analytic in $\OM$  and its trace $\LL^{q} \bg = \LL^{q}  \bu \lvert_{\Gam}\ds$ is the boundary value of an $\LL$-analytic map. 
	
	By the necessity part in Theorem \ref{NecSuf_BukhgeimHilbert_Thm}, we have $\HT (\LL^{q} \bg) \in C^{\mu}(\Gam; l_\INF)$ and
	$\ds	(I+i\HT)  (\LL^{q} \bg)= \bzero.$
	Since $\HT$ commutes with the left translation $\LL$, we obtained
	\begin{align}\label{Hg_char}
		\LL^{q} (I+i\HT)  (\bg)= \bzero.
	\end{align}
	In particular, for all even $n \leq -2q-2$, and $k \in \BZ$, we obtained
	\begin{align}\label{Hgnk_char2}
		([I+i\HT] \bg)_{n,k}= 0,
	\end{align}

	By Theorem \ref{newmapping_Hilbert}, the Fourier coefficients $\ds (\HT \bg)_{n,k}$, for even $ n \leq -2q$,  and  $k \in \BZ$,
	satisfy \eqref{Hu_comp}, 
	and thus
		\begin{align*}
		([I+i\HT] \bg)_{n,k}=\left\{
		\begin{array}{ll} 0 &\mbox{ if } k\geq 0,\\
			2g_{n,k}-2 (-1)^k  g _{n+2k,-k}&\mbox{ if } k\leq -1.
		\end{array}
		\right.\end{align*}In conjunction with \eqref{Hgnk_char2}, the Fourier coefficients of $\bg$ must satisfy \eqref{RTCond_Oddtensor}.
		
	
	The proof for $\bbf \in L^1(\mathbf{S}^m; \OM) $ follows from the density of $C^2_0( \OM)$ in $L^1(\OM)$.

	\vspace{1cm}
	(ii) {\bf Sufficiency of Theorem \ref {RangeCharac_Oddtensor}:} Recall $m=2q+1$, $q\geq 0$. Given the double sequence $\{g_{n,k}\}$ for all even $n \leq -2q$, and $k\in \BZ$, we construct a real valued symmetric $m$- tensor $\bbf$ in $\OM$ such that  the map on the torus
	$\ds \left\{
	\begin{array}{ll}
		X \bbf \text{ on }\Gam_+,\\
		-X \bbf \text{ on }\Gam_-\\
	\end{array}\right.$
	has the  Fourier coefficients matching the $\{g_{n,k}\}$'s.
	
	Recall the construction in \eqref{g_sequence_mODD},
	\begin{align}\label{gnk_bg2}
		g_{-n}(e^{i\beta}): = \sum_{k=-\infty}^\infty g_{-n,k} \, e^{i k \beta}, \; \emph{for even } n \leq -2q,\quad e^{i  \beta}\in \Gam,
		\end{align}and define the sequence valued map on $\Gam$
	\begin{align}\label{bg_gnk2}
		\bg^{\text{odd}}(e^{i  \beta}) :=
			\langle g_{-(2q)}(e^{i  \beta}), g_{-(2q+2)}(e^{i  \beta}),\cdots \cdot \rangle.
	\end{align}
	
	By the decay assumption \eqref{gnk_decay},  $\bg^{\text{even}} \in l^{1,2}_{\INF}(\Gam) \cap C^{1,\mu}(\Gam; l^1)$; see 
	Lemma \ref{prop:bg_gnk} in the appendix. In particular, $\bg^{\text{even}} \in  Y_{\mu}(\Gam)$ for $\mu>1/2$.

		We use the Bukhgeim-Cauchy integral formula \eqref{BukhgeimCauchyFormula} to
	construct the sequence valued map $\bu^{\text{even}} (z)$ inside $\OM$:
	\begin{align}\label{u_construction2}
		\bu ^{\text{even}}  (z) = 	\langle u_{-2q}(z), u_{-(2q+2)}(z),  \cdots \rangle:= \B \left[ \bg^{\text{even}}\right] (z), \quad z\in \OM.
	\end{align}

	By Theorem \ref{BukhgeimCauchyThm} (ii), the constructed $\bu ^{\text{even}} \in C^{1,\mu}(\OM;l^{1}) \cap C^{\mu}(\ol \OM;l^{1})$  is $\LL$-analytic in $\OM$,
	\begin{align}\label{vneg2}
		\ol{\del} u_{n} + \del u_{n-2} = 0,\quad \text{for all even } n\leq -2q.
	\end{align}

	
	By using the hypothesis \eqref{RTCond_Oddtensor},
   and Theorem \ref{newmapping_Hilbert},	
   we obtain 	
   $\ds[I+i\HT] \bg^{\text{even}}= \bzero$. The sufficiency part of Theorem \ref{NecSuf_BukhgeimHilbert_Thm} now applies to yield
	\begin{align}\label{buodd-trace_bgeven}
	\bu^{\text{even}} \lvert_{\Gam} = \bg^{\text{even}}.
	\end{align}
	
	All of the positive Fourier modes $u_n, g_n$ for even $n\geq 2q, q\geq0,$  are constructed by conjugation,
	\begin{align}\label{construct_vpos2}
	u_{n}:=\ol{u_{-n}},\text{ in } \OM, \text{ and } g_{n}:=\ol{g_{-n}}, \text{ on } \Gam. 
	\end{align}
As in the even order tensor case, it is easy to check that	
	\begin{alignat}{2}\label{allmodes2}
	&\ol{\del} u_{n} + \del{u_{n-2}} = 0,  && \quad \text{for all even integers }  |n|> 2q, q \geq 0\\ \label{alluTrace_g2}
	&u_{n}\lvert_{\Gam} = g_{n}, && \quad \text{for all even integers }  |n|\geq 2q, q \geq 0.
	\end{alignat}
	
	Recall the construction \eqref{g_sequence_mODD},
	\begin{align}\label{gnk_bg22}
		g_{-2n}: = \sum_{k=-\infty}^\infty g_{-2n,k} \, e^{i k \beta}, \text{ for } 0\leq  n\leq q,
		\end{align}and define $g_{2}, g_{4}, ..., g_{2q}$ by conjugation
\begin{align}\label{g_conjEven}
g_{2n}: = \ol{g_{-2n}},\quad 1\leq  n\leq q.	
\end{align}
Also recall	 the non-uniqueness class $\Psi_{g}^{\text{odd}}$ in \eqref{NART_mOddPsiClass}. 

For $\left( \psi_{0}, \psi_{-2}, \cdots \psi_{-2q}\right)\in \Psi_{g}^{\text{odd}}$ arbitrary, 
define the modes $u_0, u_{\pm 2}, ..., u_{\pm 2q}$ in $\OM$ by
\begin{equation}\label{NART_mOdd_uPsi}
		u_{-2n} := \psi_{-2n} \text{ and } u_{2n} := \ol{\psi}_{-2n}, \quad 0 \leq n \leq q.
\end{equation}By the definition of the class \eqref{NART_mOddPsiClass} and \eqref{g_conjEven}, 

\begin{equation}\label{u2trace}
			u_{-2n} \lvert_{\Gam} = g_{-2n}, \quad -q \leq n \leq q.
	\end{equation}

The components of the $m$-tensor $\bbf$ are defined via the one-to-one correspondence between  $\{\tilde{f}_{\pm (2n+1)}:\;  0\leq n\leq q\}$ in \eqref{eq:cov_tensor} and the functions  $\{f_{\pm (2n+1)}:\; 0\leq n\leq q\}$ as follows.

Define the function $f_{2q+1}$  by using $\psi_{-2q}$ from the non-uniqueness class, and $u_{-(2q+2)}$ from the Bukhgeim-Cauchy formula \eqref{u_construction2}, via
	\begin{align}\label{first2}
	2f_{2q+1}:=\ol{\del} \psi_{-2q} + \del u_{-2(q+1)}.
	\end{align}
	Then define $\{f_{2n+1}:\; 0 \leq n \leq q-1\}$ solely from the information in the non-uniqueness class via
	\begin{align} \label{NART_mOdd_fmPsiEq}
		2f_{2n+1}:=\ol{\del} \psi_{-2n} + \del \psi_{-(2n+2)},\;\;0 \leq n \leq q-1.
	\end{align}
	Finally, define $\{f_{-2n-1}:\; 0\leq n\leq q\}$ by conjugation 
	\begin{align}\label{lastDef_f2}
		f_{-(2n+1)}:=\ol{f_{2n+1}}, \quad 0 \leq n \leq q.
	\end{align}
	By construction, $f_{\pm(2n+1)}\in L^1(\OM)$, $0\leq n\leq q$. Moreover, if $\psi_{0},\cdots,\psi_{-2q}\in C^{1,\mu}(\OM)$, then  $f_{\pm(2n+1)}\in C^\mu(\OM)$.

	In the remaining of the proof  we check that, for the tensor $\bbf$ defined above, the Fourier coefficients of the map $\ds \left\{
	\begin{array}{ll}
		X \bbf \text{ on }\Gam_+,\\
		-X \bbf \text{ on }\Gam_-\\
	\end{array}\right.$ match the given $g_{n,k}$'s for all even $n\in\BZ$ and $k\in \BZ$.
	
	As in the even order tensor case, 
	we define
	\begin{align}\label{definitionU2}
	u^{\text{even}}(z, e^{i\theta})&:= \sum_{|n| \geq q+1} u_{2n}(z)e^{\i (2n)\tta} +
	\sum_{n=0}^{q} \psi_{-2n}(z) e^{ - \i (2n)\tta} + \sum_{n = 0}^{q} \ol{\psi}_{-2n}(z)e^{\i (2n)\tta}
	\end{align}
	and 
	\begin{align}\label{definitiong2}
	g(z, e^{i\theta})&:=  \sum_{\substack{n=- \INF \\ n = \,\text{even}}}^{\INF}  g_{n}(z)e^{i n\tta}, \quad (z,\theta)\in\Gam\times\sph,
	\end{align}
	and the corresponding trace identity $\ds u^{\text{even}}(\cdot,e^{i\theta})\lvert_{\Gam}  =g(\cdot,e^{i\theta})$ holds.

	Since $\bu ^{\text{even}} \in C^{1,\mu}(\OM;l^{1}) \cap C^{\mu}(\ol \OM;l^{1})$, we employ \cite[Corollary 4.1]{sadiqTamasan01} and
\cite[Proposition 4.1 (iii)]{sadiqTamasan01} to infer the regularity $u^{\text{even}} \in C^{1,\mu}(\OM \times \sph)\cap C^{\mu}(\ol{\OM}\times \sph)$. 


	Term by term differentiation in  \eqref{definitionU2}, and an application of \eqref{allmodes2}, \eqref{NART_mOdd_uPsi},  \eqref{first2},   \eqref{NART_mOdd_fmPsiEq}, and \eqref{lastDef_f2} yields
\begin{align*}
		\btheta \cdot \nabla u^{\text{even}} =  2\langle \bbf,  \btheta^{2q+1} \rangle.
\end{align*}

	Since only even modes are used in the angular variable, $g(z,\cdot)$ is an even function, and thus  the even condition \eqref{skew**} is satisfied. In addition, we claim that $g$ also satisfies the symmetry condition \eqref{skew*}. Indeed,
	\begin{align*}g (e^{i (2 \theta -\beta -\pi)}, e^{i(\theta +\pi)}) &=\sum_{\substack{m=- \INF \\m = \,\text{even}}}^{\INF}  \sum_{p \in \BZ} g_{m,p} e^{i m (\theta+\pi)}  e^{i p (2 \theta -\beta -\pi)}\\
	&=  \sum_{\substack{m=- \INF \\ m = \,\text{even}}}^{\INF}   \sum_{p \in \BZ} (-1)^{m+p}g_{m,p}  e^{i (2p+m) \theta}  e^{-i p \beta }\\
	& \xlongequal{ m=n+2k, \;  p=-k}\sum_{\substack{n=- \INF \\ n = \,\text{even}}}^{\INF}    \sum_{k \in \BZ}  (-1)^{n+k}g_{n+2k,-k} e^{i n \theta}  e^{i k \beta }\\
	& \xlongequal{ \eqref{RT_FourierEven_Oddtensor}} - \sum_{\substack{n=- \INF \\ n = \,\text{even}}}^{\INF}   \sum_{k \in \BZ} g_{n,k} e^{i n \theta}  e^{i k \beta }= - g(e^{i
		\beta},e^{i\theta}).\numberthis\label{g_symm*2}
	\end{align*}

	
	For each $(e^{i \beta},e^{i \theta}) \in \Gam_+$,
	\begin{align*}
	2 g(e^{i \beta},e^{i \theta}) &= g(e^{i \beta},e^{i \theta})+g(e^{i\beta},e^{i (\theta+\pi)})\\ 
	&= g(e^{i \beta},e^{i \theta})- g(e^{i (2\theta -\beta-\pi)},e^{i \theta})\\ 
	& =u^{\text{even}}(e^{i \beta},e^{i \theta})- u^{\text{even}}(e^{i (2\theta -\beta-\pi)},e^{i \theta}) 
	\\ 
	&
	= \int_{-  l(\beta, \theta) }^0  \btheta\cdot\nabla u ^{\text{even}}(e^{i \beta}+t e^{i \theta}, e^{i\theta})dt\\ 
	&=\int_{- l(\beta, \theta) }^0 2 \langle \bbf ( e^{i \beta} +te^{i \theta}), \btheta^m \rangle dt
	= 2[X \bbf] (e^{i \beta},e^{i \theta}), \numberthis  \label{eq:g_Xf_mOdd}
	\end{align*}
	where the first equality uses $g(e^{i\beta},\cdot)$ is angularly even, the second equality uses the symmetry relation \eqref{g_symm*2} with $\theta$ replaced by $\theta+\pi$, 
	the fifth equality uses \eqref{TransportEq_mOdd}, and the last equality uses the support of $\bbf$ in $\OM$. 
	
	Therefore $g =X \bbf$ on $\Gam_+$, and, since $g$ is angularly even, $g=-X \bbf$ on $\Gam_-$.
	
	The equation \eqref{eq:g_Xf_mOdd} also shows that components of the constructed $m$- tensor field $\bbf$ integrates along all lines in the direction of $\btheta$, and thus $\bbf\in 	L^1(\mathbf{S}^m; \OM) $.
	
	\qed  


	\section*{Acknowledgment}
	The work of K.~ Sadiq  was supported by the Austrian Science Fund (FWF), Project P31053-N32, and by the FWF Project F6801–N36 within the Special Research Program SFB F68 “Tomography Across the Scales”. 
	The work of A.~Tamasan  was supported in part by the National Science Foundation DMS-1907097.
	
	\appendix
	
	\section{Elementary results}
	To improve the readability, we moved the proof of the more elementary claims to this section. The presentation follows the order of their occurrence.
		
	\begin{lemma}\label{Qlemma}
		Let $m,k\geq 0$ be integers with $0 \leq k\leq m$, and let $Q_{m.k} (t) =  \left( t+ 1 \right)^{m-k} \left( t - 1 \right)^k$.
		Then,
		$\ds \{Q_{m,k}(t)\}_{k=0}^{m}$ form an basis in the space of polynomials of degree $m$.
	\end{lemma}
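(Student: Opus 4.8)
The plan is to exploit the fact that the family $\{Q_{m,k}\}_{k=0}^{m}$ consists of exactly $m+1$ polynomials, each of degree exactly $m$, all lying inside the $(m+1)$-dimensional space $\mathcal{P}_m$ of polynomials of degree at most $m$; hence it suffices to establish linear independence. First I would record that each $Q_{m,k}(t)=(t+1)^{m-k}(t-1)^k$ has leading term $t^m$, so $\deg Q_{m,k}=m$ and all $m+1$ members indeed belong to $\mathcal{P}_m$. The counting is then exactly right: $m+1$ vectors in an $(m+1)$-dimensional space form a basis precisely when they are independent.

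The key step is to uncover a triangular structure by expanding about $t=1$. Writing $(t+1)^{m-k}=\bigl(2+(t-1)\bigr)^{m-k}$ and applying the binomial theorem gives
\begin{align*}
Q_{m,k}(t)=\sum_{i=k}^{m}\binom{m-k}{i-k}\,2^{\,m-i}\,(t-1)^{i}.
\end{align*}
Thus, when expressed in the basis $\{(t-1)^j\}_{j=0}^{m}$ of $\mathcal{P}_m$, the polynomial $Q_{m,k}$ has vanishing coefficients for all orders $j<k$ and a nonzero coefficient $2^{\,m-k}$ at order $j=k$. Consequently the matrix expressing $\{Q_{m,k}\}_{k=0}^{m}$ in terms of $\{(t-1)^j\}_{j=0}^{m}$ is lower triangular with diagonal entries $2^{\,m-k}\neq 0$, hence invertible. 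This immediately yields linear independence, and together with the dimension count shows that $\{Q_{m,k}\}_{k=0}^{m}$ is a basis of $\mathcal{P}_m$.

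An equivalent route, which I would mention only in passing, is to argue directly from a vanishing linear combination $\sum_{k=0}^{m}c_k Q_{m,k}\equiv 0$: since $Q_{m,k}$ has a zero of order exactly $k$ at $t=1$, evaluating at $t=1$ forces $c_0=0$ (as $Q_{m,0}(1)=2^m\neq0$), and then successively differentiating and evaluating at $t=1$ peels off $c_1,c_2,\dots,c_m$ one at a time by induction. I expect no genuine obstacle in this lemma; the only point requiring care is verifying that the lowest-order coefficient $2^{\,m-k}$ is nonzero, which the binomial expansion above makes completely transparent.
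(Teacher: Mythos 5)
Your proof is correct, but it follows a genuinely different route from the paper's. The paper proves the lemma by induction on $m$: it splits off the term $g_m(t-1)^m$, factors $(t+1)$ out of the remaining sum, invokes the induction hypothesis to rewrite that part in the monomial basis, and is then led to an explicit $(m+1)\times(m+1)$ bidiagonal-plus-column linear system whose determinant it computes (by cofactor expansion along the last column) to be $\sum_{k=0}^m \binom{m}{k} = 2^m \neq 0$. You instead perform a single change of basis: expanding $(t+1)^{m-k} = \bigl(2+(t-1)\bigr)^{m-k}$ binomially shows that, in the basis $\{(t-1)^j\}_{j=0}^m$, the polynomial $Q_{m,k}$ has no components of order $j<k$ and leading low-order coefficient $2^{m-k}\neq 0$, so the change-of-basis matrix is triangular with nonzero diagonal and invertibility is immediate; combined with the dimension count this gives the basis property with no induction and no determinant computation. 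Your approach is shorter and more transparent about \emph{why} the family is independent (order of vanishing at $t=1$, which your alternative evaluation-and-differentiation argument makes explicit), whereas the paper's recursive argument has the minor side benefit of producing an explicit recursive scheme for computing the coefficients $g_k$ from the $h_k$ — something the paper remarks on after the determinant computation, and which is the form in which the lemma is actually used to pass between the components $\tilde f_k$ and the functions $f_k$ in \eqref{eq:innerprod_fThetaM}. Both arguments are complete; nothing in yours needs repair.
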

	\begin{proof}
		Given $h_0,h_1, \cdots ,h_m$, we show that there are unique $g_0,g_1, \cdots, g_m$ such that 
		\begin{align}\label{changePolyn}
			\sum_{k=0}^m g_k (t-1)^k (t+1)^{m-k} = \sum_{k=0}^m h_k t^k.\end{align}
		We argue by induction in $m$. For $m=1$, $ g_0 = (h_1+h_0)/2$ and  $g_1 = (h_1-h_0)/2$. 
		
		Assume next that $\ds \{Q_{m,k}(t)\}_{k=0}^{m-1}$ form a basis for polynomials of degree $m-1$. To simplify notation, let $a_k:= {m \choose k  } =\frac{m!}{k!(m-k)!}$, for $0\leq k\leq m$. Note that 
		$\ds \sum_{k=0}^{m} a_k=\sum_{k=0}^{m} {m \choose k  }  = 2^m$.

		The left hand side of \eqref{changePolyn} rewrites
		\begin{align*}
			\sum_{k=0}^m &g_k (t-1)^k (t+1)^{m-k} = (t+1)\sum_{k=0}^{m-1} g_k (t-1)^k (t+1)^{m-1-k} +  g_m (t-1)^m  \\
			&\xlongequal{ \text{induction hypothesis} }(t+1)\sum_{k=0}^{m-1} \gamma_k t^k +    \sum_{k=0}^m (-1)^{m-k} a_kg_m t^k  \\
			&= \sum_{k=0}^{m-1} \gamma_k t^{k+1} + \sum_{k=0}^{m-1} \gamma_k t^k +   \sum_{k=0}^m (-1)^{m-k} a_k g_m  t^k  \\
			&= \left(\gamma_0 + (-1)^{m} a_0 g_m\right)+  \sum_{k=1}^{m-1} \left(\gamma_{k-1} + \gamma_k + (-1)^{m-k} a_k g_m \right)  t^k +  \left(\gamma_{m-1} + a_m g_m \right)  t^m.
		\end{align*}

		The identity \eqref{changePolyn} yields that $ \gamma_0, \gamma_1, \cdots, \gamma_{m-1}, g_m$ solve the $(m+1)\times (m+1)$ linear system:
		\begin{align}\label{linearsystem}
			\begin{bmatrix}
				1 & 0 & 0&0  &\cdots& 0 & 0 & (-1)^{m} a_0\\
				1 & 1 & 0&0 &\cdots & 0& 0 & (-1)^{m-1} a_1\\
				0 & 1 & 1&0	 &\cdots & 0& 0 & (-1)^{m-2} a_2\\
				0 & 0 & 1&1 &\cdots & 0& 0 & (-1)^{m-3} a_3\\
				\vdots & \vdots & \vdots & \ddots& \ddots & \vdots & \vdots & \vdots\\
				0 & 0 & 0&0	 &\cdots &1 &1 &- a_{m-1}\\
				0 & 0 & 0&0	 &\cdots &0 &1 & a_{m}\\
			\end{bmatrix}
			\begin{bmatrix}
				\gamma_0 \\ \gamma_1\\ 	\gamma_2\\\gamma_3\\ \vdots\\ \gamma_{m-1} \\ g_m 
			\end{bmatrix}
			=	
			\begin{bmatrix}
				h_0 \\ h_1\\ 	h_2\\ h_3\\ \vdots\\ h_{m-1} \\ h_m 
			\end{bmatrix}.
		\end{align}
			
		The determinant of the matrix above is calculated by expanding it along the last column: Since all the $m\times m$- cofactor matrices have determinant 1, the $(m+1)\times (m+1)$ determinant is $\ds\sum_{k=0}^m	a_k=2^m>0$. 
		
		In addition to $g_m$ being determined, the determination of $\gamma_0,...,\gamma_{m-1}$ together with the induction hypothesis uniquely determine $g_0, ... g_{m-1}$. In fact this argument can be construed in a recursive computation as follows. 
		Since $g_m=2^{-m}\sum_{k=0}^m(-1)^kh_k$, the unknowns $\gamma_{m-i}$ are to be determined recursively  for $i=1,...,m$, and thus the problem is reduced to the $m\times m$ case.

	\end{proof}
	
	Recall  $\OM = \{ z  \in \BC : |z| <1  \}$ is the complex unit disc,  $\Gam = \{ z  \in \BC : |z| =1  \}$ is its boundary, and $\sph$ is the set of unit directions.
	
	\begin{prop}\label{prop:fLp_regularity}
		Let $\bbf \in L^1(S^m;\OM)$ be a symmetric $m$-tensor with integrable components. Assume that each of its components satisfies one of the conditions:
 		\begin{align}\label{eq:f_refularity_cond}
			\textnormal{supp }f_{i_1...i_m}  \subset \{ z: \lvert z \rvert \leq\sqrt{ 1- \delta^2}  \}, \;  0 < \delta <1, \quad \textnormal{or} \quad  f_{i_1...i_m}\in L^p(\OM), \;p >2,
		\end{align}
		then $X\bbf  \in L^1(\Gamma\times\sph)$.
	\end{prop}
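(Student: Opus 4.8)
The plan is to reduce the tensorial statement to a scalar one and then transfer it to the classical Radon transform through a change of variables, the only genuine difficulty being a Jacobian singularity along the tangent lines.

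First I would reduce to a scalar bound. Since each direction component satisfies $|\theta^{i_j}|\le 1$, the pointwise estimate $|\langle\bbf(z),\btheta^m\rangle|\le\sum_{(i_1,\dots,i_m)\in\{1,2\}^m}|f_{i_1\cdots i_m}(z)|$ holds, and integrating along each line gives $|X\bbf(e^{i\beta},e^{i\theta})|\le\sum_{(i_1,\dots,i_m)}Xh_{i_1\cdots i_m}(e^{i\beta},e^{i\theta})$, where $h_{i_1\cdots i_m}:=|f_{i_1\cdots i_m}|\ge 0$ and $X$ is the scalar $X$-ray transform. As this is a finite sum, it suffices to prove $Xh\in L^1(\Gam\times\sph)$ for a single nonnegative $h\in L^1(\OM)$ satisfying one of the two conditions in \eqref{eq:f_refularity_cond}.

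Second, I would pass to parallel-beam (normal) coordinates. The line $L_{(\beta,\theta)}$ has unit normal $ie^{i\theta}$ and signed distance $p=\sin(\beta-\theta)$ from the origin; writing $\phi=\theta+\tfrac{\pi}{2}$, the map $(\beta,\theta)\mapsto(\phi,p)$ has Jacobian $|\cos(\beta-\theta)|=\sqrt{1-p^2}$, so $d\theta\,d\beta=(1-p^2)^{-1/2}\,d\phi\,dp$, and it covers the (unoriented) lines meeting $\OM$, that is $|p|<1$, a fixed finite number of times. Since $Xh(e^{i\beta},e^{i\theta})$ equals the Radon transform $Rh(\phi,p):=\int_{\BR}h(p\,\nu+r\,\nu^\perp)\,dr$ of that line (here $\nu=ie^{i\theta}$), the change of variables yields, for some finite covering constant $C$,
\[
\int_{-\pi}^{\pi}\!\int_{-\pi}^{\pi}|Xh|\,d\theta\,d\beta\;\le\;C\int_{0}^{2\pi}\!\int_{-1}^{1}\frac{Rh(\phi,p)}{\sqrt{1-p^2}}\,dp\,d\phi .
\]
I would also record the elementary bound $\int_{0}^{2\pi}\!\int_{-1}^{1}Rh\,dp\,d\phi\le 2\pi\|h\|_{L^1(\OM)}$, which holds because for each fixed $\phi$ the substitution $(p,r)\mapsto p\,\nu+r\,\nu^\perp$ is an isometry of $\BR^2$.

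Third, I would dispatch the two cases. If $h$ is supported in $\{|z|\le\sqrt{1-\delta^2}\}$, then $Rh(\phi,p)=0$ for $|p|>\sqrt{1-\delta^2}$, on which range the weight is bounded, $(1-p^2)^{-1/2}\le\delta^{-1}$; the weighted integral is then $\le\delta^{-1}\cdot 2\pi\|h\|_{L^1(\OM)}<\INF$. The $L^p$ case $(p>2)$ is where the tangent-line singularity must be beaten by the shrinking chord length. On a line at distance $|p|$ the chord inside $\OM$ has length $2\sqrt{1-p^2}$, so Hölder along the chord gives $Rh(\phi,p)\le \big(R(h^p)(\phi,p)\big)^{1/p}\,(2\sqrt{1-p^2})^{1/p'}$ with $p'=p/(p-1)$; dividing by $\sqrt{1-p^2}$ leaves the factor $(1-p^2)^{-1/(2p)}$. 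A second application of Hölder in $(\phi,p)$ then bounds the weighted integral by $2^{1/p'}\big(\iint R(h^p)\big)^{1/p}\big(\iint(1-p^2)^{-1/(2(p-1))}\big)^{1/p'}$, where the first factor is finite since $h^p\in L^1(\OM)$, and the second is finite because $\tfrac{1}{2(p-1)}<1$ precisely when $p>\tfrac32$, which holds as $p>2$. The main obstacle is exactly this Jacobian weight $(1-p^2)^{-1/2}$ produced by the tangent lines: for compactly supported data it is harmless since those lines carry no mass, but for merely $L^p$ data it is tamed only by trading the singular weight against the $(1-p^2)^{1/p'}$ gain from the vanishing chord length, which is why the threshold $p>2$ (indeed $p>\tfrac32$) enters.
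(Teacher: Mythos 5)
Your proof is correct, and while it shares the paper's overall skeleton---reduce to a scalar statement, change variables so that the tangent-line Jacobian $(1-p^2)^{-1/2}$ is exposed, then split into the two cases of \eqref{eq:f_refularity_cond}---it differs in two substantive ways. First, your scalar reduction uses the pointwise bound $\lvert\langle \bbf,\btheta^m\rangle\rvert\le\sum_{(i_1,\dots,i_m)}\lvert f_{i_1\cdots i_m}\rvert$, whereas the paper expands $\langle \bbf,\btheta^m\rangle=\sum_{k} f_{2k}e^{-i2k\theta}$ (via the basis Lemma \ref{Qlemma}) and treats each harmonic $f_{2k}$ separately; your route is more elementary, needs no basis-change lemma, and cleanly handles the situation where different components satisfy \emph{different} alternatives in \eqref{eq:f_refularity_cond}, since each $\lvert f_{i_1\cdots i_m}\rvert$ inherits exactly the condition its component satisfies (the paper's $f_{2k}$'s, being linear combinations of components, could in principle satisfy a mixture of the two conditions rather than a single one). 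Second, and more importantly, in the $L^p$ case the paper applies H\"older \emph{once}, on the unit square, pairing the rotated function $f_\theta\in L^p$ against the weight $(1-s^2)^{-1/2}$, which lies in $L^q$ only for $q<2$; this is precisely where the hypothesis $p>2$ enters. You instead apply H\"older first along each chord, harvesting the factor $\left(2\sqrt{1-p^2}\right)^{1/p'}$ from the vanishing chord length, and only then H\"older in the $(\phi,p)$ variables, which tames the singular weight under the weaker condition $p>\tfrac32$. So your argument proves a slightly stronger statement than the proposition requires, at the modest cost of a two-step rather than one-step H\"older bookkeeping; both arguments handle the compactly supported case identically.
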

	
	\begin{proof}To fix ideas, we consider the even tensor case, when  $m=2l$. The odd tensor case follows similarly.
	
	From the  one-to-one linear combination correspondence between the components $f_{i_1...i_m}$ of $\bbf$, and the functions $f_{2k}$'s,  $-l\leq k\leq l$ in the identity \eqref{eq:innerprod_fThetaM},
 	\begin{equation*}
		\langle \bbf, \btheta^m \rangle   = \sum_{k=-l}^{l} f_{2k} e^{-\i (2k)\tta} ,	
	\end{equation*}
	we have that each of the $f_{2k}$'s also satisfies $f_{2k}\in L^1(\OM)$ and
	\begin{align}\label{eq:f_refularity_condBIS}
			\textnormal{supp }f_{2k} \subset \{ z: \lvert z \rvert \leq\sqrt{ 1- \delta^2}  \}, \;  0 < \delta <1, \quad \textnormal{or} \quad  f_{2k}\in L^p(\OM), \;p >2.
		\end{align}

		For $e^{i \beta} \in \Gam$ and $e^{i\theta} \in \sph$,  the $X$-ray transform of $\bbf$ (with components extended by 0 outside $\OM$) is given by
		\begin{align*}
			X\bbf(e^{i \beta}, e^{i \theta}) = \sum_{k=-l}^l\int_{-\INF}^{\INF} f_{2k}(e^{i \beta} +t e^{i \theta})e^{-i2k\theta} dt.
		\end{align*}
		
		We will show that each term in the sum above is integrable on the torus. To simplify notation, we drop the index notation from the function and show that if $f$ satisfies \eqref{eq:f_refularity_condBIS}, then
		$$X_k f(e^{i \beta}, e^{i \theta}):=\int_{-\INF}^{\INF} f(e^{i \beta} +t e^{i \theta})e^{-i2k\theta} dt \mbox{ lies in }L^1(\Gam\times\sph).$$ 
		Rewrite $\ds X_k f(e^{i \beta}, e^{i \theta}):=\int_{-\INF}^{\INF} f_\theta(e^{i (\beta-\theta)} +t)e^{-i2k\theta} dt$, where $f_\theta (z) := f(ze^{i \theta})$ is obtained from $f$ by a rotation of the domain by angle $\theta$. Note that $f_\theta$ preserves the  $L^p$-norm of $f$ for any $p\geq1$.
		
		We estimate 
		\begin{align*}
			\lnorm{X_kf}&_{L^1(\Gamma\times\sph)} = \frac{1}{(2\pi)^2}    \int_{-\pi}^{\pi}
			\int_{-\pi}^{\pi} \lvert  X_kf(e^{i \beta}, e^{i \theta}) \rvert d \beta d \theta \\
			& \leq  \frac{1}{(2\pi)^2}  \int_{-\pi}^{\pi}
			\int_{-\pi}^{\pi} \int_{-\INF}^{\INF} \lvert  f_{\theta}(e^{i (\beta-\theta)} +t ) \rvert  dt  d \beta d \theta  \\
			&  \xlongequal{ \alpha = \beta-\theta } 2\frac{1}{(2\pi)^2}    \int_{-\pi}^{\pi}
			\int_{-\pi/2}^{\pi/2} \int_{-\INF}^{\INF} \lvert  f_{\theta}(e^{i \alpha} +t ) \rvert  dt d \alpha d \theta \\
			&  \xlongequal{s = \sin \alpha } \frac{1}{2\pi^2}    \int_{-\pi}^{\pi}
			\int_{-\INF}^{\INF} \int_{-1}^{1}  \frac{\lvert f_{\theta}(\sqrt{1-s^2}+t + is ) \rvert }{\sqrt{1-s^2}}  ds dt d \theta \\
			&  \xlongequal{u = t + \sqrt{1-s^2} }\frac{1}{2\pi^2}    \int_{-\pi}^{\pi}
			\int_{-\INF}^{\INF} \int_{-1}^{1} \frac{ \lvert  f_{\theta}(u +is ) \rvert  }{\sqrt{1-s^2}}    ds du d \theta \\  \numberthis \label{eq:Xfnorm}
			&	= \frac{1}{2\pi^2}    \int_{-\pi}^{\pi}
			\int_{-1}^{1} \int_{-1}^{1} \frac{ \lvert  f_{\theta}(u+i s ) \rvert  }{\sqrt{1-s^2}}   ds du d \theta,
		\end{align*}	where the last equality uses $ \supp f_\theta \subset \OM$ for any $\theta\in(-\pi,\pi]$.
		
		On the one hand, if $\supp f  \subset  \{ z: \lvert z \rvert \leq\sqrt{ 1- \delta^2}  \}$,  then 
		\begin{align*}
			\lnorm{X_kf}_{L^1(\Gamma\times\sph)} & \leq \frac{1}{2\pi^2}    \int_{-\pi}^{\pi}
			\int_{-1}^{1} \int_{-\sqrt{1-\delta^2}}^{\sqrt{1-\delta^2}} \frac{ \lvert  f_{\theta}(u+is ) \rvert  }{\sqrt{1-s^2}}    ds du d \theta \\
			& \leq \frac{1}{2\pi^2}    \frac{1}{\delta} \int_{-\pi}^{\pi}
			\lnorm{f_{\theta}}_{L^1(\OM)} d \theta 
			= \frac{1}{\pi \delta }  \lnorm{f}_{L^1(\OM)}.
		\end{align*}	
		On the other hand, if $f \in L^p(\OM)$, $p >2$, let $T:=(-1,1)\times(-1,1)$ denote the unit square. 
		Since $\OM\subset T$, for every $\theta\in(-\pi,\pi]$, $\ds f_\theta\in L^p(T)$, and  $\ds \lVert f_\theta\lVert_{L^p(T)}=\lVert f\lVert_{L^p(\OM)}$. 
		
		Let $q=\frac{p}{p-1}$ be the conjugate index of $p$. If $p>2$, then $q<2$ and the map 
		$$ T  \ni(u,s) \mapsto \frac{ 1 }{\sqrt{1-s^2}}\mbox{ lies in } L^q(T).$$ 
		Moreover, since it is constant in one coordinate, $\ds \lnorm{\frac{ 1}{\sqrt{1-(\cdot)^2}}}_{L^q(T)}=2\lnorm{\frac{ 1}{\sqrt{1-(\cdot)^2}}}_{L^q( -1,1)}$. 
		
		An application of the Hölder's inequality in \eqref{eq:Xfnorm} yields
		\begin{align*}
			\lnorm{X_kf}_{L^1(\Gamma\times\sph)} & \leq \frac{1}{\pi^2} \int_{-\pi}^{\pi}
			\lnorm{\frac{ 1}{\sqrt{1-(\cdot)^2}}}_{L^q( -1,1)} \lnorm{f_{\theta}}_{L^p(\OM)} d \theta 
			= \frac{2}{\pi} \lnorm{\frac{ 1}{\sqrt{1-(\cdot)^2}}}_{L^q(T)}
			\lnorm{f}_{L^p(\OM)} .
		\end{align*}

	\end{proof}
	

	\begin{lemma}\label{lem:evenness}
		Let $m \geq 0$ be an integer.
		Let $g$ be an integrable function on the torus satisfying the symmetry relation
		\begin{align}\label{sym*1}
			g (e^{i \beta},e^{i \theta})=(-1)^m g (e^{i(2\theta -\beta-\pi)},e^{i( \theta+\pi)}),\text{ for } (e^{i \beta},e^{i \theta})\in\Gam\times\sph,
		\end{align}
		and $g_{n,k}$'s 
		be its Fourier coefficients. Then 
		\begin{align}\label{FourierEvenness_cond}
			g_{n,k}= (-1)^m (-1)^{n+k}g_{n+2k,-k}, \mbox{ for all }n,k\in\BZ.
		\end{align} 
	\end{lemma}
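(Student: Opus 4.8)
The plan is to establish \eqref{FourierEvenness_cond} directly at the level of the defining integrals \eqref{eq:gnk}, rather than through a formal Fourier-series comparison. A Fourier-series argument (expand $g$, substitute into \eqref{sym*1}, and match coefficients) is the morally transparent route, but since $g$ is only assumed integrable its series need not converge, so I prefer to feed the symmetry \eqref{sym*1} into the coefficient integral and then undo the induced argument shift by an explicit change of variables on the torus.

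Concretely, first I would substitute \eqref{sym*1} into
\[
g_{n,k} = \frac{1}{(2\pi)^2}\int_{-\pi}^{\pi}\int_{-\pi}^{\pi} g(e^{i\beta},e^{i\theta})\,e^{-in\theta}e^{-ik\beta}\,d\theta\,d\beta,
\]
which produces a global factor $(-1)^m$ and replaces the integrand by $g(e^{i(2\theta-\beta-\pi)},e^{i(\theta+\pi)})$. Next I would introduce the new variables $\theta' = \theta+\pi$ and $\beta' = 2\theta-\beta-\pi$, chosen precisely so that the argument of $g$ collapses to $(e^{i\beta'},e^{i\theta'})$. The decisive structural point is that the linear part of this affine map has matrix $\left(\begin{smallmatrix}1 & 0\\ 2 & -1\end{smallmatrix}\right)$, whose determinant is $-1$; hence the map preserves the lattice $2\pi\BZ^2$, descends to a bijection of the torus, and has Jacobian of absolute value one, so the integral over a fundamental domain is unchanged.

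Then I would invert the substitution as $\theta = \theta'-\pi$ and $\beta = 2\theta'-\beta'-3\pi$, and re-express the exponential weights: $e^{-in\theta} = (-1)^n e^{-in\theta'}$ and $e^{-ik\beta} = (-1)^{3k} e^{-2ik\theta'}e^{ik\beta'} = (-1)^{k} e^{-2ik\theta'}e^{ik\beta'}$, so that their product becomes $(-1)^{n+k}e^{-i(n+2k)\theta'}e^{-i(-k)\beta'}$. Collecting all the factors, the surviving integral is exactly the defining integral \eqref{eq:gnk} for $g_{n+2k,-k}$, and I read off $g_{n,k} = (-1)^m(-1)^{n+k}g_{n+2k,-k}$, as claimed.

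The phase bookkeeping is routine (the only mild subtlety being that $(-1)^{3k}=(-1)^{k}$), and the genuine point requiring care — and the step I expect to be the main obstacle — is the justification that the affine substitution is a measure-preserving automorphism of the torus, i.e.\ that it is well defined and bijective modulo $2\pi$ with unit Jacobian. This is exactly what legitimizes the change of variables for a merely integrable $g$ and lets the argument avoid any appeal to convergence of the Fourier series.
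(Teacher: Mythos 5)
Your proposal is correct and is essentially the paper's own argument: the paper likewise substitutes \eqref{sym*1} into the coefficient integral \eqref{eq:gnk} and performs the same change of variables, merely implemented as two successive one-variable substitutions ($\gamma=\theta+\pi$, then $\alpha=2\gamma-\beta-\pi$) interleaved with periodicity shifts, which together constitute exactly your single affine torus automorphism. Your unimodularity justification (integer matrix of determinant $-1$ preserving the lattice $2\pi\BZ^2$) is a clean packaging of what the paper checks by hand, and the phase bookkeeping agrees.
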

	
	\begin{proof}
		
		Indeed,
		\begin{align*}
			g_{n,k} &= \frac{1}{(2 \pi)^2} \int_{-\pi}^{\pi} \int_{-\pi}^{\pi} g(e^{i \beta}, e^{i\theta}) e^{-i n \theta}  e^{-i k \beta} d \theta d \beta \\
			&\xlongequal{  \eqref{sym*1} } (-1)^m 
			\frac{1}{(2 \pi)^2} \int_{-\pi}^{\pi} \int_{-\pi}^{\pi} g (e^{i(2\theta -\beta-\pi)},e^{i( \theta+\pi)}) e^{-i n \theta}  e^{-i k \beta} d \theta d \beta \\
			& \xlongequal{ \gamma = \theta +\pi } (-1)^m (-1)^{ n}
			\frac{1}{(2 \pi)^2} \int_{-\pi}^{\pi} \int_{0}^{2\pi} g (e^{i(2\gamma -\beta-3\pi)},e^{i \gamma})  e^{-i n \gamma}  e^{-i k \beta} d \gamma d \beta \\
			&\xlongequal{ \text{periodicity} } (-1)^m  (-1)^{ n}
			\frac{1}{(2 \pi)^2} \int_{-\pi}^{\pi} \int_{-\pi}^{\pi} g (e^{i(2\gamma -\beta-\pi)},e^{i \gamma})  e^{-i n \gamma}  e^{-i k \beta} d \gamma d \beta \\
			&\xlongequal{ \alpha = 2\gamma -\beta-\pi} (-1)^m   (-1)^{ n}
			\frac{1}{(2 \pi)^2} \int_{-\pi}^{\pi} \int_{2\gamma}^{2\gamma-2\pi} g (e^{i \alpha},e^{i \gamma})  e^{-i k (2\gamma-\alpha-\pi)} e^{-i n \gamma} (-d \alpha) d \gamma \\
			&= (-1)^m  (-1)^{ n+k}	 \frac{1}{(2 \pi)^2} \int_{-\pi}^{\pi} \int_{2\gamma-2\pi}^{2\gamma} g (e^{i \alpha},e^{i \gamma})   e^{-i (n+2k)  \gamma} e^{i k \alpha} d \alpha d \gamma \\
			&\xlongequal{ \text{periodicity} } 
			(-1)^m 	(-1)^{ n+k}
			\frac{1}{(2 \pi)^2} \int_{-\pi}^{\pi}   \int_{-\pi}^{\pi} g (e^{i \alpha},e^{i \gamma})    e^{-i (n+2k)  \gamma} e^{i k \alpha} d \alpha  d \gamma \\
			&=(-1)^m (-1)^{n+k}g_{n+2k,-k}.
		\end{align*} 
	\end{proof}

	The following result is a direct consequence of  the uniform convergence. Recall that 
	$\sph$ denotes the unit circle.
	\begin{lemma}\label{lem_nfn} 
		Let $(X,\lnorm{\cdot})$ be a Banach space, $\{a_n\}_{n \in \BZ}$ be a sequence in $X$, and $p\geq 0$ integer.
		If $\ds \sum_{n \in \BZ} \jpn^p \lnorm{ a_n} < \INF$, then $ \ds \sph \ni \zeta \mapsto  \sum_{n \in \BZ} a_n \zeta^{ n}$ defines a $C^p$ map on $\sph$  with values in $X$.
	\end{lemma}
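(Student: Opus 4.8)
The plan is to identify $\sph$ with the interval via $\zeta = e^{i\theta}$ and to show that the $X$-valued function $F(\theta):=\sum_{n\in\BZ} a_n e^{in\theta}$ is $p$-times continuously differentiable by differentiating the series term by term, each differentiated series converging uniformly thanks to the single summability hypothesis.

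First I would record the elementary monotonicity $\jpn^{\,j}\leq \jpn^{\,p}$ for $0\leq j\leq p$ (valid since $\jpn\geq 1$), so that the assumption $\ds\sum_{n\in\BZ}\jpn^{\,p}\lnorm{a_n}<\INF$ yields $\ds\sum_{n\in\BZ}\jpn^{\,j}\lnorm{a_n}<\INF$ for every such $j$. In particular, taking $j=0$, the series defining $F$ converges absolutely and uniformly in $\theta$ by the Weierstrass $M$-test with majorant $M_n=\lnorm{a_n}$; since $X$ is complete, $F$ is a well-defined continuous $X$-valued map, and by $2\pi$-periodicity it descends to a continuous map on $\sph$.

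Next, for the derivatives I would consider the formally differentiated series. The $j$-th $\theta$-derivative of the $n$-th summand is $(in)^j a_n e^{in\theta}$, of norm $|n|^j\lnorm{a_n}\leq\jpn^{\,j}\lnorm{a_n}\leq\jpn^{\,p}\lnorm{a_n}$; hence for each $0\leq j\leq p$ the series $\ds\sum_{n\in\BZ}(in)^j a_n e^{in\theta}$ converges absolutely and uniformly on $\sph$. I would then invoke the term-by-term differentiation theorem for Banach-valued functions of a real variable, arguing by induction on $j$: if the partial sums converge uniformly and their $\theta$-derivatives converge uniformly as well, then the limit is differentiable and its derivative equals the limit of the derivatives. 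Applying this successively for $j=1,\dots,p$ shows $F\in C^p$, with each derivative given by the corresponding differentiated series.

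The only point requiring care is the validity of term-by-term differentiation in the vector-valued setting, but this needs no new idea. The classical real-variable proof uses only the fundamental theorem of calculus and completeness of the codomain, both available here (the integral of a continuous $X$-valued integrand being defined in the Bochner, or even Riemann, sense), so the argument transfers verbatim. Thus $F$ is a $C^p$ map on $\sph$ with values in $X$, which is the assertion.
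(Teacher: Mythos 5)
Your proof is correct and is exactly the argument the paper has in mind: the paper states this lemma without proof, remarking only that it is ``a direct consequence of the uniform convergence,'' and your write-up (Weierstrass $M$-test via $\jpn^{j}\leq\jpn^{p}$, then term-by-term differentiation for Banach-valued functions, justified by the fundamental theorem of calculus and completeness of $X$) is the standard fleshing-out of precisely that remark.
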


	For the following result, we recall some of the spaces in \eqref{spaces}, for $0<\mu<1$, $p=1,2$:
	\begin{equation}\label{spaces1}
		\begin{aligned} 
			l^{1,p}_{\INF}(\Gam) &:= \left \{ \bg:=\langle  
			g_{0}, g_{-1} , g_{-2}, \cdots  \rangle\; : \lnorm{\bg}_{l^{1,p}_{\INF}(\Gam)}:= \sup_{\xi \in \Gam}\sum_{j=0}^{\INF}  \jpj^p \lvert g_{-j}(\xi) \rvert < \INF \right \},\\
			C^{\mu}(\Gam; l_1) &:= \left \{ \bg:=\langle  
			g_{0}, g_{-1} , g_{-2}, \cdots  \rangle:
			\sup_{\xi\in \Gam} \lVert \bg(\xi)\rVert_{\ds l_{1}} + \underset{{\substack{
						\xi,\eta \in \Gam \\
						\xi\neq \eta } }}{\sup}
			\frac{\lVert \bg(\xi) - \bg(\eta)\rVert_{\ds l_{1}}}{|\xi - \eta|^{ \mu}} < \INF \right \}, 
		\end{aligned}
	\end{equation} where, for brevity, we use the notation $\jpj=(1+|j|^2)^{1/2}$.
	
	\begin{lemma}\label{prop:bg_gnk}  
		Let $\{g_{-n,k}\}_{n \geq 0,  k\in \BZ}$ be a double sequence  satisfying  the decay
		\begin{align}\label{gnk_decay_1}
			\sum_{n=0}^{\INF}  \jpn^{2} \sum_{k=-\INF}^{\INF} \lvert g_{-n,k} \rvert < \INF, \quad \text{and} \quad
			\sum_{k=-\INF}^{\INF}  \jpk^{1+\mu} 
			\sum_{n=0}^{\INF} \lvert g_{-n,k} \rvert < \INF,  \; \text{for some} \; 0\leq \mu \leq1.
		\end{align}
		Let  $\ds  \bg( \zeta) :=\langle  
		g_{0}( \zeta ), g_{-1}(\zeta ), g_{-2}(\zeta ), \cdots \cdot \rangle,$ for $\zeta \in \Gam=\{z: |z|=1\}$, and where for each $n \geq 0$,  
		$\ds g_{-n}(\zeta):=\sum_{k=-\infty}^\infty g_{-n,k} \, \zeta^{ k }.$
		Then $\bg \in l^{1,2}_{\INF}(\Gam) \cap C^{1,\mu}(\Gam; l^1)$.
	\end{lemma}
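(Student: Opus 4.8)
The plan is to regard $\bg$ as a map from $\sph$ into the Banach space $l^1$ of summable sequences, and to verify the two membership claims separately: the bound in $l^{1,2}_{\INF}(\Gam)$ is immediate from the first decay hypothesis, while membership in $C^{1,\mu}(\Gam;l^1)$ follows from Lemma~\ref{lem_nfn} for the $C^1$ part and an elementary exponential-difference estimate for the Hölder part.

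First I would dispatch the $l^{1,2}_{\INF}$ bound. Since $|\zeta^{k}|=1$ for $\zeta\in\Gam$, each component obeys $|g_{-n}(\zeta)|\le\sum_{k}|g_{-n,k}|$ uniformly in $\zeta$; multiplying by $\jpn^{2}$ and summing gives $\sup_{\zeta\in\Gam}\sum_{n\ge0}\jpn^{2}|g_{-n}(\zeta)|\le\sum_{n\ge0}\jpn^{2}\sum_{k}|g_{-n,k}|$, which is finite by the first estimate in \eqref{gnk_decay_1}. This is nothing more than a rearrangement of an absolutely convergent double series, and it also shows each $g_{-n}$ is a well-defined continuous function on $\Gam$.

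For the $C^{1,\mu}(\Gam;l^1)$ claim I would write $\bg(\zeta)=\sum_{k\in\BZ}\mathbf a_{k}\,\zeta^{k}$ with vector coefficients $\mathbf a_{k}:=\langle g_{0,k},g_{-1,k},g_{-2,k},\dots\rangle$, so that $\|\mathbf a_{k}\|_{l^1}=\sum_{n\ge0}|g_{-n,k}|$. The second estimate in \eqref{gnk_decay_1} then reads $\sum_{k}\jpk^{1+\mu}\|\mathbf a_{k}\|_{l^1}<\INF$; since $\mu\ge0$ this dominates $\sum_{k}\jpk\,\|\mathbf a_{k}\|_{l^1}$, so Lemma~\ref{lem_nfn} (with $X=l^1$ and $p=1$) applies and yields that $\zeta\mapsto\bg(\zeta)$ is a $C^{1}$ map into $l^1$, with derivative given by termwise differentiation, $\frac{d}{d\beta}\bg(e^{i\beta})=\sum_{k}(ik)\,\mathbf a_{k}\,e^{ik\beta}$, the series converging absolutely in $l^1$.

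The one genuinely nonroutine step is the $\mu$-Hölder continuity of this derivative in the $l^1$ norm, and here I would exploit the interpolated bound $|e^{ikh}-1|\le 2^{1-\mu}|kh|^{\mu}$, valid for $0\le\mu\le1$ (from $|e^{i\theta}-1|\le\min(|\theta|,2)$). Writing $h=\beta_{1}-\beta_{2}$, this gives
\begin{align*}
\Big\|\tfrac{d}{d\beta}\bg(e^{i\beta_{1}})-\tfrac{d}{d\beta}\bg(e^{i\beta_{2}})\Big\|_{l^1}
&\le \sum_{k}|k|\,\|\mathbf a_{k}\|_{l^1}\,|e^{ikh}-1| \\
&\le 2^{1-\mu}\,|h|^{\mu}\sum_{k}|k|^{1+\mu}\|\mathbf a_{k}\|_{l^1},
\end{align*}
which is finite because $|k|^{1+\mu}\le\jpk^{1+\mu}$. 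Comparing the angular increment $|h|$ with the chord length $|e^{i\beta_{1}}-e^{i\beta_{2}}|$ (they are comparable on $\Gam$, say $|h|\le\tfrac{\pi}{2}|e^{i\beta_{1}}-e^{i\beta_{2}}|$) upgrades this to the sought bound $C\,|e^{i\beta_{1}}-e^{i\beta_{2}}|^{\mu}$. The exponent in the interpolation inequality is precisely matched to the weight $\jpk^{1+\mu}$ in the hypothesis, so this is exactly where the sharp decay assumption is consumed; every other step is a controlled interchange of absolutely convergent sums.
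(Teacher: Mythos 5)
Your proof is correct, and while it opens the same way as the paper (the $l^{1,2}_{\INF}$ bound via rearrangement of the absolutely convergent double series, and the decomposition $\bg(\zeta)=\sum_{k}\mathbf a_k\zeta^k$ with $\mathbf a_k=\{g_{-n,k}\}_{n\geq 0}$, which is exactly the paper's $\bF(k)$), you handle the key H\"older step by a genuinely different route. The paper invokes Lemma~\ref{lem_nfn} at the two integer endpoints — $p=1$ giving $C^{1}(\Gam;l^1)$ and $p=2$ giving $C^{1,1}(\Gam;l^1)$ — and then disposes of the intermediate case $0<\mu<1$ with the single phrase ``follows by interpolation.'' Read literally, that endpoint argument cannot be applied to the given $\bg$, since the hypothesis \eqref{gnk_decay_1} supplies the weight $\jpk^{1+\mu}$ for one fixed $\mu$ only, not the weight $\jpk^{2}$ needed for the $C^{1,1}$ endpoint; what is really meant is interpolation of the synthesis operator $\{\mathbf a_k\}\mapsto\bg$ between the weighted-$l^1$ scales, and this is left implicit. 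Your argument replaces that abstract step with the elementary interpolated bound $\lvert e^{ikh}-1\rvert\leq 2^{1-\mu}\lvert kh\rvert^{\mu}$, applied to the termwise-differentiated series, which consumes precisely the hypothesized weight $\jpk^{1+\mu}$ and yields the $\mu$-H\"older modulus of the derivative with an explicit constant; the chord--arc comparison $\lvert h\rvert\leq\tfrac{\pi}{2}\lvert e^{i\beta_1}-e^{i\beta_2}\rvert$ (valid once $h$ is taken in $(-\pi,\pi]$) then converts angular increments to the metric on $\Gam$. What each approach buys: the paper's is shorter if one grants the operator-interpolation machinery, whereas yours is self-contained, works uniformly for all $0\leq\mu\leq 1$ in a single computation, and closes the small rigor gap in the paper's final sentence. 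One cosmetic point: Lemma~\ref{lem_nfn} as stated only asserts $C^p$ regularity, so when you say the derivative is ``given by termwise differentiation'' you should note this follows from the absolute $l^1$-convergence of $\sum_k\lvert k\rvert\,\lVert\mathbf a_k\rVert_{l^1}$, which you have already established — a one-line addendum, not a gap.
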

	\begin{proof}
		The fact that $\bg \in l^{1,2}_{\INF}(\Gam)$ follows from the first bound in \eqref{gnk_decay_1}:
		\begin{align*}
			\lnorm{\bg}_{l^{1,2}_{\INF}(\Gam)} = \sup_{\zeta  \in \Gam}
			\sum_{n=0}^{\INF}   \jpn^2 \lvert g_{-n}( \zeta ) \rvert 
			\leq  \sum_{n=0}^{\INF} \jpn^2  \sum_{k=-\INF}^{\INF} \left \lvert g_{-n,k} \right \rvert
			< \INF.
		\end{align*}
		We prove that $\bg \in C^{1,\mu}(\Gam; l^1)$ by interpolation between  $C^{1}(\Gam; l^1)$, and  $C^{1,1}(\Gam; l^1)$. 
		For each $k \in \BZ$, consider the $l^1$ sequence  $\bF(k) := \{g_{-n,k}\}_{n \geq0}$.
		By the second bound in \eqref{gnk_decay_1}, we have 
		\begin{align*}
			\sum_{k\in \BZ}  \jpk^{1+\mu} \lnorm{\bF(k)}_{l^1} = \sum_{k=-\INF}^{\INF}   \jpk^{1+\mu} \sum_{n=0}^{\INF} \left \lvert g_{-n,k} \right \rvert
			< \INF.
		\end{align*}

		Since $\{\jpk^{1+\mu} \bF(k) \}_{k \in \BZ}  \in l^1(\BZ_k;l^1(\BN_n))$, by applying Lemma \ref{lem_nfn} for $X=l^1(\BN_n)$, and $p = 1+\mu $, with $\mu =0$, we conclude  $\bg \in C^{1}(\Gam; l^1)$, whereas with $\mu =1$, we obtain  $\bg \in C^{1,1}(\Gam; l^1)$.
		The result for $0 < \mu <1$ follows by interpolation.
		
	\end{proof}



\begin{thebibliography}{99}
		
		
		
		
		
		\bibitem{ABK} E.~V.~Arbuzov, A.~L.~Bukhgeim and S.~G.~Kazantsev, \textit{Two-dimensional tomography problems and the theory of A-analytic functions}, Siberian Adv. Math., \textbf{8} (1998), 1--20.
		
		\bibitem{AMU} Y.~M.~Assylbekov, F. ~Monard and G.~Uhlmann, \textit{Inversion formulas and range characterizations for the attenuated geodesic ray transform}, Journal de Math\'ematiques Pures et Appliqu\'ees  \textbf{111} (2018), 161--190.
		
		
		
		
		\bibitem{bal04} G.~Bal, \textit{On the attenuated Radon transform with full and partial measurements}, Inverse Problems \textbf{20} (2004), 399--418.
		
		
		%
		
		\bibitem{bomanStromberg} J.~Boman and J.-O.~Str\"omberg, \textit{Novikov's inversion formula for the attenuated Radon transform--a new approach}, J. Geom. Anal., \textbf{14} (2004), 185--198.
		
		
		\bibitem{bukhgeimBook} A.~L.~Bukhgeim, \textit{Inversion Formulas in Inverse Problems}, chapter in Linear Operators and Ill-Posed Problems by M. M. Lavrentiev and L. Ya. Savalev, Plenum, New York, 1995.
		
		\bibitem{chanLeng05} G.-H.~Chen and S.~Leng, \textit{A new data consistency condition for fan-beam projection data}, Med. Phys., \textbf{32 (4)} (2005), 961--967.
		
		
		
		
		\bibitem{clackdoyle13} R.~Clackdoyle, \textit{Necessary and sufficient consistency conditions for fan-beam projections along a line}, IEEE transations on Nuclear Science, \textbf{60} (2013), 1560--1569. 
		
		\bibitem{derevtsovSvetov15} E.~Derevtsov and I.~Svetov, \textit{Tomography of tensor fields in the plane}, Eurasian J. Math. Comput. Appl., \textbf{3(2)}, (2015), 24--68.
		
		\bibitem{denisiuk}  A.~Denisiuk, \textit{On range condition of the tensor X-ray transforms in $\BR^n$},  Inverse Prob. Imaging, \textbf{14(3)}, (2020), 423--435.
		
		
		
		
		
		
		
		
		\bibitem{finch} D.~V.~Finch, \textit{The attenuated x-ray transform: recent developments}, in Inside out: inverse problems and applications, Math. Sci. Res. Inst. Publ., \textbf{47}, Cambridge Univ. Press, Cambridge, 2003, 47--66.
		
		
		
		\bibitem{gelfandGraev} I. M. Gelfand and M.I. Graev, \textit{Integrals over hyperplanes of basic and generalized functions}, Dokl. Akad. Nauk SSSR \textbf{135 (6)} (1960), 1307--1310; English transl., Soviet Math. Dokl., \textbf{1} (1960), 1369--1372.
		
		
		\bibitem{gompelDelfriseDyck} G.~Van~Gompel, M.~Defrise, and D.~Van~Dyck, 
		\textit{Elliptical extrapolation of truncated 2D CT projections using Helgason-
			Ludwig consistency conditions}, in Proceedings of the Medical
		Imaging 2006: Physics of Medical Imaging, February 2006.
		
		
		\bibitem{helgason65} S. Helgason, \textit{The Radon transform on Euclidean spaces, compact two-point homogenous spaces and Grassmann manifolds}, Acta Math., \textbf{113} (1965), 153--180.
		
		
		\bibitem{helgasonBook} S.~Helgason, \textit{The Radon transform}, Progress in Mathematics \textbf{5}, Birkh\"{a}user, Boston,  1980.
		
		
		\bibitem{ilmavirta_etal}  J. Ilmavirta, O. Koskela, and J. Railo, \textit{Torus computed tomography}, SIAM Journal on Applied Mathematics, \textbf{80} (2020), 1947--1976.
		
		
		\bibitem{karp_etal}
		J.~S.~Karp, G.~Muehllehner, and R.~M.~Lewitt, 
		\textit{Constrained fourier space method for compensation of missing data in emission computed tomography}, IEEE Transactions on Medical Imaging, \textbf{7 (1)}, (1988) 21--25. 
		
		
		
		\bibitem{kazantsevBukhgeimJr04} S. G. Kazantsev and A. A. Bukhgeim, \textit{Singular value decomposition for the 2D fan-beam Radon transform of tensor fields}, J. Inverse Ill-Posed Problems \textbf{12} (2004), 245--278.
		
		\bibitem{kazantsevBukhgeimJr06} S. G. Kazantsev and A. A. Bukhgeim, \textit{The Chebyshev ridge polynomials in 2D tensor tomography}, J. Inverse Ill-Posed Problems, \textbf{14} (2006), 157--188.
		
		\bibitem{kazantsevBukhgeimJr07}  S.~G.~Kazantsev and A.~A.~Bukhgeim, \textit{Inversion of the scalar and vector attenuated X-ray transforms in a unit disc}, J. Inverse Ill-Posed Probl., \textbf{15} (2007), 735--765.
		
		\bibitem{venke20} V.~P.~Krishnan, R.~Manna, S.~K.~ Sahoo, and V.~A.~ Sharafutdinov, \textit{Momentum ray transforms, II: range characterization in the Schwartz space
		}, Inverse Problems \textbf{36 (4)} (2020) 045009 (33pp).
		
		\bibitem{kuchmentLvin} P. Kuchment, S. A. L'vin, \textit{Range of the Radon exponential transform}, Soviet Math. Dokl. 42 (1991), no. 1, 183--184.
		
		
		
		\bibitem{kudoSaito} H.~Kudo and T.~Saito,
		\textit{Sinogram recovery with the method of
			convex projections for limited-data reconstruction in computed
			tomography}, Journal of the Optical Society of America A: Optics
		and Image Science, and Vision, \textbf{8 (7)}, (1991) 1148--1160.
		
		\bibitem{donsubPreprint} W. Li, K. Ren, and D. Rim, \textit{A range characterization of the single-quadrant ADRT},  arXiv:2010.05360v1, preprint (2020). 
		
		
		
		\bibitem{ludwig} D. Ludwig, \textit{The Radon transform on euclidean space}, Comm. Pure Appl. Math., \textbf{19} (1966), 49--81.
		
		
		
		
		
		
		\bibitem{monard16}  F.~Monard, \textit{Efficient tensor tomography in fan-beam coordinates}, Inverse Probl. Imaging, \textbf{10(2)} (2016), 433--459.
		
		
		\bibitem{monard17}  F.~Monard, \textit{Efficient tensor tomography in fan-beam coordinates. II: Attenuated transforms}, Inverse Probl. Imaging, \textbf{12(2)} (2018), 433--460.
		
		
		
		
		
		
		
		\bibitem{muskhellishvili} N.~I.~Muskhelishvili, Singular Integral Equations, Dover, New York, 2008.
		
		\bibitem{nattererBook} F.~Natterer, {The mathematics of computerized tomography}, Wiley, New York, 1986.
		
		\bibitem{natterer01} F.~Natterer, \textit{Inversion of the attenuated Radon transform}, Inverse Problems \textbf{17} (2001), 113--119.
		
		
		
		\bibitem{novikov01} R.~G.~Novikov, \textit{Une formule d'inversion pour la transformation d'un rayonnement X att\' enu\' e},  C. R. Acad. Sci. Paris S\' er. I Math., \textbf{332} (2001),  1059--1063.
		
		\bibitem{novikov02} R.~G.~Novikov, \textit{On the range characterization for the two-dimensional attenuated x-ray transformation}, Inverse Problems \textbf{18} (2002), no. 3, 677--700.
		
		
		\bibitem{omogbheSadiq} D.~Omogbhe, and K.~Sadiq,  \textit{On the $X$-ray transform of planar symmetric higher order tensors}, preprint (2022) 
		
		\bibitem{pantyukhina} E.~Yu.~Pantyukhina, \textit{Description of the image of a ray transformation in the two-dimensional case.}(Russian)  Methods for solving inverse problems (Russian), 
		80--89, \textbf{144}, Akad. Nauk SSSR Sibirsk. Otdel., Inst. Mat., Novosibirsk,  1990. 
		
		
		\bibitem{patch} S.~K.~Patch, \textit{Moment conditions indirectly improve image quality},
		in Radon Transforms and Tomography (South Hadley, MA, 2000), 193--205, Contemp. Math., \textbf{278}, Amer. Math. Soc., Providence, RI, 2001.
		
		
		\bibitem{paternainSaloUhlmann14} G. P. Paternain, M. Salo, and G. Uhlmann, \textit{Tensor Tomography: Progress and Challenges}, Chin. Ann. Math. Ser. B., \textbf {35(3)} (2014), 399--428.
		
		\bibitem{pestovUhlmann} L. Pestov and G. Uhlmann, \textit{On characterization of the range and inversion formulas for the geodesic X-ray transform}, Int. Math. Res. Not. {\bf 80} (2004), 4331--4347.
		
		\bibitem{radon1917} J. Radon, \textit{\"{U}ber die Bestimmung von Funktionen durch ihre Integralwerte l\"{a}ngs gewisser Mannigfaltigkeiten},
		Ber. S\"{a}chs. Akad. Wiss. Leipzig, Math.-Phys. Kl., \textbf{69} (1917), 262--277.
		
		
		\bibitem{sadiqTamasan01} K.~Sadiq and A.~Tamasan, \textit{On the range of the attenuated Radon transform in strictly convex sets}, Trans. Amer. Math. Soc., \textbf{367(8)} (2015), 5375--5398.
		
		\bibitem{sadiqTamasan02} K.~Sadiq and A.~Tamasan, \textit{On the range characterization of the two dimensional attenuated Doppler transform},  SIAM J. Math.Anal., \textbf{47(3)} (2015), 2001--2021.
		
		\bibitem{sadiqTamasan22} K.~Sadiq and A.~Tamasan, \textit{On the range of the planar $X$-ray transform on the Fourier lattice of the torus},  preprint (2022). 
		
		\bibitem{sadiqScherzerTamasan} K.~Sadiq, O.~Scherzer, and A.~Tamasan, \textit{On the X-ray transform of planar symmetric 2-tensors}, J. Math. Anal. Appl., \textbf{442(1)} (2016),  31--49.
		
		\bibitem{vladimirBook} V.~A.~Sharafutdinov, {Integral geometry of tensor fields}, VSP, Utrecht, 1994.
		
		
		
		
		
		
		
		%
		
		
		\bibitem{yuWang07} H.~Yu and G.~Wang, \textit{Data consistency based rigid motion artifact reduction in fan-beam CT}, IEEE Transactions on Medical Imaging, \textbf{26 (2)}  (2007), 249--260.
		
		\bibitem{yuWangEtall06}
		H.~Yu,  Y. ~Wei,  J.~Hsieh, and G.~Wang, \textit{Data consistency based
			translational motion artifact reduction in fan-beam CT},  IEEE Transactions on Medical Imaging, \textbf{25 (6)}  (2006), 792--803.
		
		\bibitem{xia_etall}
		Y.~Xia, M.~Berger, S.~Bauer, S.~Hu, A.~Aichert, and A.~Maier,
		\textit{An Improved Extrapolation Scheme for Truncated CT Data Using 2D Fourier-Based Helgason-Ludwig Consistency Conditions}, 
		Int. J. Biomed. Imaging 2017 1867025.
		
		
	\end{thebibliography}
\end{document}